\title{Quasi-hyperbolic planes in relatively hyperbolic groups}
\author{John M. Mackay}
\address{School of Mathematics \\ University of Bristol \\ Bristol, UK}
\email{john.mackay@bristol.ac.uk}
\author{Alessandro Sisto}
\address{Department of Mathematics, ETH Zurich, 8092 Zurich, Switzerland}
\email{sisto@math.ethz.ch}
\date{\today}
\subjclass[2000]{20F65, 20F67, 51F99}
\keywords{Relatively hyperbolic group, quasi-isometric embedding, hyperbolic plane, quasi-arcs.}
\thanks{The research of the first author was supported in part by EPSRC grants EP/K032208/1 and EP/P010245/1.}
\numberwithin{equation}{section}
\newtheorem{theorem}[equation]{Theorem}
\newtheorem{proposition}[equation]{Proposition}
\newtheorem{corollary}[equation]{Corollary}
\newtheorem{lemma}[equation]{Lemma}
\newtheorem{example}[equation]{Example}
\newtheorem{definition}[equation]{Definition}
\newtheorem{remark}[equation]{Remark}
\newtheoremstyle{citing}
  {3pt}
  {3pt}
  {\itshape}
  {}
  {\bfseries}
  {}
  {.5em}
  {\thmnote{#3}}
\theoremstyle{citing}
\newtheorem*{varthm}{}
\DeclareMathOperator{\diam}{diam}
\DeclareMathOperator{\arccosh}{arccosh}
\DeclareMathOperator{\Isom}{Isom}
\newcommand{\del}{\delta}
\newcommand{\gam}{\gamma}
\newcommand{\eps}{\epsilon}
\newcommand{\lam}{\lambda}
\newcommand{\bdry}{\partial_\infty}
\newcommand{\cH}{\mathcal{H}}
\newcommand{\cB}{\mathcal{B}}
\newcommand{\cC}{\mathcal{C}}
\newcommand{\cV}{\mathcal{V}}
\newcommand{\cD}{\mathcal{D}}
\newcommand{\cO}{\mathcal{O}}
\newcommand{\cP}{\mathcal{P}}
\newcommand{\ra}{\rightarrow}
\newcommand{\R}{\mathbb{R}}
\newcommand{\Sph}{\mathbb{S}}
\newcommand{\N}{\mathbb{N}}
\newcommand{\Z}{\mathbb{Z}}
\newcommand{\HH}{\mathbb{H}}
\newcommand{\tilM}{\widetilde{M}}
\def\XXint#1#2#3{{\setbox0=\hbox{$#1{#2#3}{\int}$}
\vcenter{\hbox{$#2#3$}}\kern-.5\wd0}}
\numberwithin{equation}{section}
\begin{document}

\begin{abstract}
We show that any group that is hyperbolic relative to virtually nilpotent subgroups,
and does not admit peripheral splittings,
contains a quasi-isometrically embedded copy of the hyperbolic plane.  In natural situations, the specific embeddings 
we find remain quasi-isometric embeddings when composed with the inclusion map from the Cayley 
graph to the coned-off graph, as well as when composed with the quotient map to ``almost every'' peripheral (Dehn) filling.

We apply our theorem to study the same question for fundamental groups
of $3$-manifolds.

The key idea is to study quantitative geometric properties of the boundaries of
relatively hyperbolic groups, such as linear connectedness.
In particular, we prove a new existence result for quasi-arcs that avoid obstacles.
\end{abstract}

\maketitle
\tableofcontents

\section{Introduction}\label{sec-intro}

A well known question of Gromov asks whether every (Gromov) hyperbolic group which is not
virtually free contains a surface group.
While this question is still open, its geometric analogue has a complete solution.
Bonk and Kleiner~\cite{BK-05-quasiarc-planes}, answering
a question of Papasoglu, showed the following.

\begin{theorem}[{Bonk--Kleiner \cite{BK-05-quasiarc-planes}}]\label{thm-bonk-kleiner}
	A hyperbolic group $G$ contains a quasi-isometrically embedded copy of $\HH^2$ 
	if and only if it is not virtually free.
\end{theorem}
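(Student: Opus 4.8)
The plan is to prove both implications; the ``only if'' direction is soft, and the ``if'' direction carries the weight. Suppose first that $G$ contains a quasi-isometrically embedded copy of $\HH^2$. If $G$ were virtually free it would be quasi-isometric to a simplicial tree $T$, and composing would give a quasi-isometric embedding $\HH^2\to T$. A quasi-isometrically embedded subspace of a hyperbolic space is quasiconvex, hence itself Gromov hyperbolic with boundary topologically embedded in the ambient boundary; thus $\bdry\HH^2=\Sph^1$ would embed in $\bdry T$. But $\bdry T$ is totally disconnected while $\Sph^1$ is connected, a contradiction. (Equivalently, asymptotic dimension is monotone under coarse embeddings, so $2=\operatorname{asdim}\HH^2\le\operatorname{asdim}T=1$, which is absurd.) Hence $G$ is not virtually free.

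For the converse, let $G$ be hyperbolic and not virtually free. By Stallings' theorem and Dunwoody's accessibility, $G$ decomposes as a finite graph of groups with finite edge groups whose vertex groups are finite or one-ended; since a graph of virtually free groups amalgamated over finite subgroups is again virtually free, some vertex group $H$ must be infinite and one-ended. Such a vertex group is quasiconvex, hence quasi-isometrically embedded in $G$, so it suffices to find a quasi-isometrically embedded $\HH^2$ inside $H$; replacing $G$ by $H$, we may assume $G$ is one-ended. Fix a visual metric on $Z:=\bdry G$. Then $Z$ is compact, connected, and non-degenerate; it is locally connected and has no cut points (Bestvina--Mess, Bowditch, Swarup); it is doubling; and it is linearly connected --- all with constants depending only on $G$ and the chosen metric.

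Being a non-degenerate Peano continuum without cut points, $Z$ is not a dendrite and therefore contains a topological circle. The key point is to upgrade this to a \emph{uniform quasi-circle}: using that $Z$ is doubling and linearly connected, one invokes the existence theorem for quasi-arcs, in the refined form that permits routing a quasi-arc so as to avoid a prescribed obstacle (a quantitative surrogate for the absence of cut points), to join two chosen points of $Z$ by two quasi-arcs meeting only at their endpoints; their union is a metric circle that is doubling and of bounded turning, hence a quasi-circle by the Tukia--V\"ais\"al\"a characterisation. This produces an $\eta$-quasisymmetric embedding $\phi\colon\Sph^1\to Z$ with $\eta$ depending only on the data. \emph{This is the crux of the argument}, and exactly where obstacle-avoiding quasi-arc technology is needed.

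It remains to promote $\phi$ to a quasi-isometric embedding $\HH^2\to G$ inducing $\phi$ on boundaries. One clean route uses hyperbolic cones: $\operatorname{Con}(\Sph^1)$ is quasi-isometric to $\HH^2$; the quasisymmetry $\phi$ induces a quasi-isometry $\operatorname{Con}(\Sph^1)\to\operatorname{Con}(\phi(\Sph^1))$; the isometric inclusion $\phi(\Sph^1)\hookrightarrow Z$ induces a quasi-isometric embedding of cones; and $\operatorname{Con}(Z)=\operatorname{Con}(\bdry G)$ is quasi-isometric to $G$ since $G$ is a visual hyperbolic space of bounded geometry. Concretely, the same map can be constructed by hand by gluing genuine bi-infinite $G$-geodesics between the points $\phi(p)$, where $p$ runs over the ideal vertices of the Farey tessellation of $\HH^2$, in the combinatorial pattern of that tessellation: the quasisymmetry of $\phi$ controls the asymptotic overlaps of adjacent geodesics via the Gromov-product/visual-metric dictionary, so the resulting complex is quasi-isometric to the Farey graph and hence to $\HH^2$, while hyperbolicity of $G$ together with the $\Sph^1$-like geometry of $\phi(\Sph^1)$ supplies the lower distance bound making the map into $G$ a quasi-isometric embedding. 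The only genuinely hard step is the construction of the uniform quasi-circle; the reduction to the one-ended case and the passage from a quasi-circle to a hyperbolic plane are standard, given respectively the structure theory of hyperbolic groups and the theory of visual hyperbolic spaces and hyperbolic cones.
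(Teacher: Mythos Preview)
The paper does not give a standalone proof of this theorem --- it is cited from Bonk--Kleiner --- but it is recovered as the special case $\cP=\emptyset$ of Theorem~\ref{thm-main-vnilp}, so the relevant comparison is with that argument specialised to the hyperbolic setting.

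Your ``only if'' direction and the reduction to one-ended $G$ via Stallings--Dunwoody agree with the paper. The gap is in your construction of a quasi-\emph{circle} in $\bdry G$. You propose to build two quasi-arcs between a chosen pair of points, meeting only at their endpoints, by invoking the obstacle-avoiding quasi-arc theorem to route the second around the first. But that theorem (Theorem~\ref{thm-modified-qarc} here) requires each obstacle to be $L$-porous and $L$-avoidable on suitable scales, and an arbitrary quasi-arc in $\bdry G$ need satisfy neither hypothesis. When $\bdry G=\Sph^1$ (e.g.\ $G$ a cocompact Fuchsian group), every non-degenerate arc has nonempty interior, hence is not porous, and locally disconnects the boundary, hence is not avoidable. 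The absence of global cut \emph{points} gives no control over whether an \emph{arc} locally disconnects, so the ``quantitative surrogate for the absence of cut points'' you appeal to does not exist in the form your argument needs.

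The paper's route (following Bonk--Kleiner) sidesteps this entirely by observing that a quasi-\emph{arc} already suffices. Tukia's theorem produces one directly from doubling plus linear connectedness of $\bdry G$, with no obstacle avoidance needed. Since the ideal boundary of the standard quadrant $Q\subset\HH^2$ is bi-Lipschitz to $[0,1]$, a quasi-arc in $\bdry G$ extends via \cite{BS-00-gro-hyp-embed} to a quasi-isometric embedding $Q\hookrightarrow G$. One then translates by the cocompact $G$-action to obtain embedded balls of arbitrarily large radius and passes to a limit by Arzel\`a--Ascoli; this is exactly Proposition~\ref{prop-avoid-horoballs}(1), which is trivial in the hyperbolic case since there are no horoballs. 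In short, you work harder than necessary to manufacture a full quasi-circle, and the step where you do so is precisely the step that is not justified.
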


In this paper, we study when a relatively hyperbolic group admits a quasi-isometrically
embedded copy of $\HH^2$ by analysing the geometric properties of boundaries of such groups.

For a hyperbolic group $G$, quasi-isometrically embedded copies of $\HH^2$ in $G$ correspond
to quasisymmetrically embedded copies of the circle $S^1 = \bdry \HH^2$ in the boundary of the group.
Bonk and Kleiner build such a circle when a hyperbolic group has connected boundary
by observing that the boundary is \emph{doubling} (there exists $N$ so that any ball can be covered by
$N$ balls of half the radius) and \emph{linearly connected} (there exists $L$ so that any points $x$ and $y$
can be joined by a continuum of diameter at most $Ld(x,y)$).  For such spaces, a theorem of Tukia
applies to find quasisymmetrically embedded arcs, or \emph{quasi-arcs} \cite{Tuk-96-qarc}.

We note that this proof relies on the local connectedness of boundaries of one-ended hyperbolic groups,
a deep result following from work of Bestvina and Mess, and Bowditch and Swarup 
\cite[Proposition 3.3]{BM-91-dimbdry}, \cite[Theorem 9.3]{Bow-98-bdry-access-hyp}, \cite[Corollary 0.3]{Bow-99-conn-lim-set}, 
\cite{Swa-96-cut-point}.

Our strategy is similar to that of Bonk and Kleiner, but to implement this we have to prove several basic results 
regarding the geometry of the boundary of a relatively hyperbolic group,
which we believe are of independent interest.

The model for the boundary that we use is due to Bowditch, who builds a model space $X(G, \cP)$ by gluing
horoballs into a Cayley graph for $G$, and setting $\bdry(G, \cP) = \bdry X(G, \cP)$
\cite{Bow-99-rel-hyp} (see also \cite{Gro-Man-08-dehn-rel-hyp}).

We fix a choice of $X(G, \cP)$ and, for suitable conditions on the peripheral subgroups, 
we show that the boundary $\bdry(G, \cP)$ has good geometric properties.  
For example, using work of Dahmani and Yaman, such boundaries will be doubling if and only if the peripheral subgroups
are virtually nilpotent.
We establish linear connectedness when the peripheral subgroups are one-ended and there are no peripheral splittings.
(See Sections~\ref{sec-boundaries} and \ref{sec-lin-conn} for precise statements.)

At this point, by Tukia's theorem, we can find quasi-isometrically embedded copies of $\HH^2$ in $X(G, \cP)$,
but these can stray far away from $G$ into horoballs in $X(G, \cP)$.  To find copies of $\HH^2$ actually in $G$ itself we must prevent
this by building a quasi-arc in the boundary that in a suitable sense stays relatively far away from the
parabolic points.  

This requires additional geometric properties of the boundary (see Section~\ref{sec-avoiding}),
and also a generalisation of Tukia's theorem which builds quasi-arcs that avoid certain kinds of obstacles
(Theorem~\ref{thm-modified-qarc}).

A simplified version of our main result is the following:

\begin{theorem}\label{thm-main-hyp-rel-hyp-simple}
	Let $(G, \cP)$ be a finitely generated relatively hyperbolic group,
	where all $P\in\cP$ are virtually nilpotent.
	Suppose $G$ is one-ended and does not split over a subgroup of a conjugate of some $P\in\cP$. 
	
	Then there is a quasi-isometric embedding of $\HH^2$ in $G$.
\end{theorem}

The methods we develop are able to find embeddings that avoid more subgroups than just virtually nilpotent peripheral groups.  Here is a more precise version of the theorem: 

\begin{theorem}\label{thm-main-hyp-rel-hyp}
	Suppose both $(G, \cP_1)$ and $(G, \cP_1 \cup \cP_2)$ are finitely generated relatively hyperbolic groups,
	where all peripheral subgroups in $\cP_1$ are virtually nilpotent and non-elementary,
	and all peripheral subgroups in $\cP_2$ are hyperbolic.
	Suppose $G$ is one-ended and does not split over a subgroup of a conjugate of some $P\in\cP_1$. 
	
	Finally, suppose that $\bdry H \subset \bdry(G, \cP_1)$ does not
	locally disconnect the boundary, for any $H \in \cP_2$ (see Definition~\ref{def-loc-disconn}).
	Then there is a quasi-isometric embedding of $\HH^2$ in $G$ that is {transversal} in $(G, \cP_1 \cup \cP_2)$.
\end{theorem}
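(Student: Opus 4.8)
The plan is to run the strategy already used for Theorem~\ref{thm-main-vnilp} --- build a suitable quasi-arc in the boundary and fill it in with a copy of $\HH^2$ --- but with an enlarged family of obstacles, so that the plane we produce is also transversal to the $\cP_2$-peripherals. Fix Bowditch's model space $X=X(G,\cP_1)$, so that $X$ is a proper geodesic $\delta$-hyperbolic space on which $G$ acts, $Z:=\bdry(G,\cP_1)=\bdry X$, and fix a visual metric $\rho$ on $Z$. Since the subgroups in $\cP_1$ are virtually nilpotent and non-elementary, hence one-ended, and $G$ is one-ended and does not split over a subgroup of a conjugate of some $P\in\cP_1$, the results of Section~\ref{sec-boundaries} put us in good shape: $Z$ is compact, connected, $N$-doubling (by the criterion of Dahmani and Yaman) and $L$-linearly connected, with $N,L$ depending only on the data. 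This is exactly the input needed to start the Bonk--Kleiner machine.

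Next I would assemble the obstacle family $\cV=\cV_1\cup\cV_2$. Let $\cV_1$ be the set of parabolic points $p_{gP}$ of $(G,\cP_1)$ --- each a single point of $Z$ --- with scale $D(\{p_{gP}\}):=\diam\operatorname{Shad}(gP)$, the visual diameter of the shadow of the coset $gP$; and let $\cV_2$ be the set of $G$-translates $g\bdry H\subset Z$ of the limit sets of the infinite peripherals $H\in\cP_2$, with scale $D(V):=\diam V$. Every member of $\cV$ is compact, and every scale is positive and at most $\diam Z$. The crux is to verify the three remaining hypotheses of Theorem~\ref{thm-modified-qarc} for $\cV$, and this is where the remaining hypotheses of the present theorem enter. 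For \emph{$L$-separation}: distinct members of $\cV$ arise from distinct left cosets of peripheral subgroups of the relatively hyperbolic pair $(G,\cP_1\cup\cP_2)$, and the bounded coset penetration property of this pair yields uniform separation of the associated shadows, parabolic points and limit sets relative to their scales --- this is precisely where the hypothesis that $(G,\cP_1\cup\cP_2)$ is relatively hyperbolic is used. For \emph{$L$-porosity below the scale}: a member of $\cV_1$ is a single point, so any ball about it contains a definite-size ball missing it (using that $Z$ is linearly connected, hence uniformly perfect); a member $g\bdry H$ of $\cV_2$ is the quasiconvex limit set of the hyperbolic subgroup $H$, which has empty interior at the relevant scales because $H$-cosets penetrate quasigeodesics boundedly (Section~\ref{sec-avoiding}). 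For \emph{$L$-avoidability below the scale}: for $\cV_1$ this amounts to the parabolic points of $(G,\cP_1)$ not being local cut points at the shadow scale, which follows from the one-endedness of the $\cP_1$-peripherals together with the no-splitting hypothesis; for $g\bdry H\in\cV_2$ it is exactly the quantitative, translation-uniform form of the hypothesis that $\bdry H$ does not locally disconnect $\bdry(G,\cP_1)$, using that $\bdry H$ is quasiconvex and its $G$-translates are ``self-similar'', so that a single non-local-disconnection statement bootstraps across all scales. The detailed forms of these three facts are what Section~\ref{sec-avoiding} supplies.

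Granting this, Theorem~\ref{thm-modified-qarc} produces a constant $\lambda=\lambda(N,L)$ and a $\lambda$-quasi-arc $\gamma\subset Z$ with $\diam\gamma\geq\tfrac12\diam Z$ and $\rho(\gamma,V)\geq\tfrac1\lambda D(V)$ for all $V\in\cV$. Feeding $\gamma$ into the Bonk--Kleiner filling construction \cite{BK-05-quasiarc-planes} --- take the union of the bi-infinite geodesics of $X$ joining pairs of points of $\gamma$; the doubling, linearly connected, visual geometry makes this hull uniformly quasi-isometric to $\HH^2$ --- yields a quasi-isometric embedding $\phi\colon\HH^2\to X(G,\cP_1)$ with $\bdry(\phi(\HH^2))=\gamma$ and constants depending only on $\lambda$ and $\delta$. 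The avoidance of $\cV_1$ keeps $\phi(\HH^2)$ within bounded depth of $G$ in every $\cP_1$-horoball: a geodesic entering the horoball over $gP$ to depth $t$ has both of its endpoints within visual distance $\lesssim e^{-\epsilon t}\diam\operatorname{Shad}(gP)=e^{-\epsilon t}D(\{p_{gP}\})$ of $p_{gP}$, which is incompatible with $\rho(\gamma,p_{gP})\geq\tfrac1\lambda D(\{p_{gP}\})$ once $t$ is large; composing with a coarse closest-point projection $X(G,\cP_1)\to G$ then gives a quasi-isometric embedding $\HH^2\to G$. Finally, the avoidance of $\cV_2$ gives transversality: in the larger model $X(G,\cP_1\cup\cP_2)$, whose boundary is obtained from $\bdry(G,\cP_1)$ by collapsing each $g\bdry H$ to a point, the curve $\gamma$ still maps to a quasi-arc because it avoids definite neighborhoods of the collapsed sets, and the same depth estimate now keeps the plane within bounded depth of $G$ in every horoball of $X(G,\cP_1\cup\cP_2)$; equivalently $\phi(\HH^2)$ meets each left coset of each $H\in\cP_1\cup\cP_2$ in a set of bounded diameter, so the composition of $\HH^2\to G$ with the map to the coned-off graph $\widehat G$, and with almost every peripheral filling, remains a quasi-isometric embedding. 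This is the asserted transversal embedding.

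I expect the main obstacle to be the third item above: turning ``does not locally disconnect'' into a single uniform, multi-scale avoidability statement holding simultaneously for all the translates $g\bdry H$, and dually verifying that the parabolic points of $(G,\cP_1)$ are uniformly avoidable precisely at the shadow scale. This is the step that genuinely combines the hyperbolicity of the $\cP_2$-peripherals, the quasiconvexity of their limit sets, and the relative hyperbolicity of $(G,\cP_1\cup\cP_2)$; everything else is a matter of assembling the boundary estimates of Sections~\ref{sec-boundaries} and~\ref{sec-avoiding}, invoking Theorem~\ref{thm-modified-qarc}, and quoting the Bonk--Kleiner filling argument.
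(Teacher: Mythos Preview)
Your proposal is correct and follows essentially the same route as the paper. The paper sets $\cV=\{\bdry H:H\in\cH\}$ with the single scale function $D(\bdry H)=e^{-\eps d(w,H)}$ (which is comparable to your shadow/limit-set diameters), invokes Propositions~\ref{prop-bdry-doubling}, \ref{prop-bdry-lin-conn}, \ref{prop-avoidable-bdry} and Lemmas~\ref{lem-parabolic-points}, \ref{lem-bdry-porous} exactly as you anticipate, applies Theorem~\ref{thm-modified-qarc}, and then uses Bonk--Schramm to extend a quasisymmetric parametrization of $\gamma$ to a quasi-isometric embedding of the standard \emph{quadrant} $Q\subset\HH^2$; the passage from $Q$ to a full $\HH^2$ and the transversality are both packaged in Proposition~\ref{prop-avoid-horoballs} (via Lemma~\ref{farfrombpoints} and an Arzel\`a--Ascoli limit), rather than via your ``geodesic hull'' description or the collapsed-boundary picture --- but the content is the same.
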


(Theorem \ref{thm-main-hyp-rel-hyp-simple} follows from Theorem \ref{thm-main-hyp-rel-hyp} by letting $\cP_2=\emptyset$ and $\cP_1$ equal to the collection of non-elementary elements of $\cP$.)

Roughly speaking, a quasi-isometric embedding is \emph{transversal} if the image has only bounded intersection with any (neighbourhood of a) left coset of a peripheral subgroup 
(see Definition \ref{transversal}).
If both $\cP_1$ and $\cP_2$ are empty the group is hyperbolic and the result is a corollary of Theorem~\ref{thm-bonk-kleiner}.
If $\cP_1$ is empty, but $\cP_2$ is not, then the group is hyperbolic, but the quasi-isometric embeddings we find
avoid the hyperbolic subgroups conjugate to those in $\cP_2$.
\begin{example}
	Let $M$ be a compact hyperbolic $3$-manifold with a single, totally geodesic surface as boundary $\partial M$.
	The fundamental group $G = \pi_1(M)$ is hyperbolic, and also is hyperbolic relative to $H = \pi_1(\partial M)$
	(see, for example, \cite[Proposition 13.1]{Bel-RH}). 
	
	The hypotheses of Theorem \ref{thm-main-hyp-rel-hyp} are satisfied for $\cP_1 = \emptyset$ and $\cP_2 = \{H\}$,
	since $\bdry G = \bdry(G, \emptyset)$ is a Sierpi\'nski carpet, 
	with the boundary of conjugates of $H$ corresponding to the peripheral circles of the carpet.
	Thus, we find a transversal quasi-isometric embedding of $\HH^2$ into $G$.
\end{example}

The notion of transversality is 
interesting for us, because a transverse quasi-isometric embedding
of a geodesic metric space $Z \ra G$ induces:
\begin{enumerate}
	\item a quasi-isometric embedding $Z \ra G \ra \hat{\Gamma}$ into the coned-off
	(or ``electrified'') graph $\hat{\Gamma}$ (see Proposition~\ref{coned-off}), and
	\item a quasi-isometric embedding $Z \ra G \ra G/\!\ll\! \{N_i\}\!\gg$ into
	certain peripheral (or Dehn) fillings of $G$ (see Proposition~\ref{perfill}).
\end{enumerate}

When combined, Theorem \ref{thm-main-hyp-rel-hyp} and Proposition \ref{perfill} provide 
interesting examples of relatively hyperbolic groups containing quasi-iso\-met\-ric\-al\-ly embedded copies of $\HH^2$
that do not have virtually nilpotent peripheral subgroups.
A key point here is that Theorem \ref{thm-main-hyp-rel-hyp} provides embeddings transversal to hyperbolic subgroups,
and so one can find many interesting peripheral fillings.
See Example \ref{nonvnilpquot} for details.

Using our results, we describe when the fundamental group of
a closed, oriented $3$-manifold contains a quasi-iso\-met\-rical\-ly embedded copy of $\HH^2$.
Determining which $3$-manifolds (virtually) contain immersed or embedded $\pi_1$-injective surfaces
is a very difficult problem \cite{KM, CPL, Lac, BS-graph,Coo-Fut-17-quasi-fuchs-cusp}.
The following theorem essentially follows from known results, in particular
work of Masters and Zhang~\cite{Ma-Zh-08-fuch-hyp-knot,Ma-Zh-09-fuch-hyp-link}.
However, our proof is a simple consequence of Theorem~\ref{thm-main-hyp-rel-hyp} and the geometrisation theorem.

\begin{varthm}[Theorem \ref{thm-threemanifolds}.]
	Let $M$ be a closed $3$-manifold. Then $\pi_1(M)$ contains a quasi-isometrically 
	embedded copy of $\HH^2$ if and only if $M$ does not split as the connected sum
of manifolds each with geometry $S^3, \R^3, S^2\times \R$ or $\mathrm{Nil}$.
\end{varthm}

Notice that the geometries mentioned above are exactly those that give virtually nilpotent fundamental groups.

We note that recently Leininger and Schleimer proved a result similar to Theorem~\ref{thm-main-hyp-rel-hyp} for Teichm\"{u}ller spaces \cite{LS-teich}, using very different techniques.

In an earlier version of this paper we claimed a characterisation of which groups hyperbolic relative to virtually nilpotent subgroups admitted a quasi-isometrically embedded copy of $\HH^2$.  This claim was incorrect due to problems with amalgamations over elementary subgroups.
One cause of trouble is the following:
\begin{example}
	Let $F_2 = \langle a,b \rangle$ be the free group on two generators, and let $H$ be the Heisenberg group, with centre $Z(H) \cong \Z$.  Let $G = F_2 *_{\Z} H$, where we amalgamate the distorted subgroup $Z(H)$ and $\langle [a,b] \rangle \leq F_2$.  The group $G$ is hyperbolic relative to $\{H\}$, is one-ended, but does not contain any quasi-isometrically embedded copy of $\HH^2$.
\end{example}
It remains an open question to characterise which relatively hyperbolic groups with virtually nilpotent peripheral groups admit quasi-isometrically embedded copies of $\HH^2$.

Finally, we note that the geometric properties of boundaries of relatively hyperbolic groups we establish here have recently been used by Groves--Manning--Sisto to study the relative Cannon conjecture for relatively hyperbolic groups~\cite{Gro-Man-Sis-16-rel-hyp-cannon}.
\subsection{Outline}

In Section~\ref{sec-rel-hyp-defs} we define relatively hyperbolic groups and their boundaries,
and discuss transversality and its consequences.
In Section~\ref{sec-separation-in-bdry} we give preliminary results linking 
the geometry of the boundary of a relatively hyperbolic group
to that of its model space.
Further results on the boundary itself are found in Sections~\ref{sec-boundaries}--\ref{sec-avoiding}, 
in particular, how sets can be connected, and avoided, in a controlled manner.

The existence of quasi-arcs that avoid obstacles is proved in Section~\ref{sec-qarcs-that-avoid}.
The proof of Theorem~\ref{thm-main-hyp-rel-hyp} is given in Section~\ref{sec-build-planes}.
Finally, connections with $3$-manifold groups are explored in Section~\ref{sec-three-manifolds}.

\subsection{Notation}

The notation $x\gtrsim_C y$ (occasionally abbreviated to $x \gtrsim y$) signifies $x\geq y-C$.
Similarly, $x\lesssim_C y$ signifies $x\leq y+C$.
If $x\lesssim_C y$ and $x\gtrsim_C y$ we write $x\approx_C y$.

Throughout, $C$, $C_1$, $C_2$, etc., will refer to appropriately chosen constants.
The notation $C_3=C_3(C_1, C_2)$ indicates that $C_3$ depends on the choices of $C_1$ and $C_2$.

For a metric space $(Z,d)$, the \emph{open ball} with centre $z\in Z$ and radius $r>0$ is denoted
by $B(z,r)$.  The \emph{closed ball} with the same centre and radius is denoted
by $\overline{B}(z,r)$.
We write $d(z, V)$ for the infimal distance between a subset $V \subset Z$ and a point $z \in Z$.
The \emph{open neighbourhood} of $V \subset Z$ of radius $r>0$
is the set
\[
	N(V, r) = \{ z \in Z : d(z, V) < r \}.
\]

\subsection{Acknowledgements}
We thank Fran{\c{c}}ois Dahmani, Bruce Klein\-er, Marc Lackenby, Xiangdong Xie and a referee for helpful comments.

\section{Relatively hyperbolic groups and transversality}\label{sec-rel-hyp-defs}

In this section we define relatively hyperbolic groups and their (Bow\-ditch) boundaries.
We introduce the notion of a transversal embedding, and show that such embeddings
persist into the coned-off graph of a relatively hyperbolic group, or into suitable peripheral 
fillings of the same.

\subsection{Basic definitions}
There are many (equivalent) definitions of relatively hyperbolic groups.
We give one here in terms of actions on a cusped space.
First we define our model of a horoball.

\begin{definition}
	Suppose $\Gamma$ is a connected graph with vertex set $V$ and edge set $E$,
	where every edge has length one.
	Let $T$ be the strip $[0,1] \times [1, \infty)$ in the upper half-plane model of $\HH^2$.
	Glue a copy of $T$ to each edge in $E$ along $[0,1] \times \{1\}$,
	and identify the rays $\{v\} \times [1,\infty)$ for every $v \in V$.
	The resulting space with its path metric is 
	the \emph{horoball} $\cB(\Gamma)$.
\end{definition}

These horoballs are hyperbolic with boundary a single point.  
(See the discussion following \cite[Theorem 3.8]{Bow-99-rel-hyp}.)
Moreover, it is easy to estimate distances in horoballs.
\begin{lemma}\label{lem-horoball-dist}
	Suppose $\Gamma$ and $\cB(\Gamma)$ are defined as above.
	Let $d_\Gamma$ and $d_\cB$ denote the corresponding path metrics.
	Then for any distinct vertices $x, y \in \Gamma$, identified with $(x,1), (y,1) \in \cB(\Gamma)$, we have
	\[
		d_{\cB}(x,y) \approx_1 2 \log(d_\Gamma(x,y)).
	\]
\end{lemma}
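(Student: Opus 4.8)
The plan is to pin down $d_\cB(x,y)$ exactly, by comparing with a flat model region, and then to reduce the statement to an elementary inequality for $\arccosh$. Set $n=d_\Gamma(x,y)\ge 1$ and fix a combinatorial geodesic $\eta$ from $x$ to $y$ in $\Gamma$, with vertices $x=v_0,v_1,\dots,v_n=y$. The half-strips over its edges, together with the rays over its vertices, glue to a subspace $\cB(\eta)\subseteq\cB(\Gamma)$ which, with its own path metric, is isometric to $R=\{(a,t)\in\HH^2: 0\le a\le n,\ t\ge 1\}$ equipped with its induced length metric, via a map sending $(x,1)\mapsto(0,1)$ and $(y,1)\mapsto(n,1)$.

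For the upper bound, the $\HH^2$-geodesic between $(0,1)$ and $(n,1)$ — the circular arc of radius $\sqrt{1+n^2/4}$ centred at $(n/2,0)$ — stays inside $R$, since over each $a\in[0,n]$ its height is $\sqrt{1+n^2/4-(a-n/2)^2}\ge 1$ and $a$ varies monotonically over $[0,n]$. Hence $d_{\cB(\eta)}((x,1),(y,1))=d_{\HH^2}((0,1),(n,1))=\arccosh(1+n^2/2)$, and a fortiori $d_\cB(x,y)\le\arccosh(1+n^2/2)$. (If one prefers not to check that the arc stays in $R$, the cruder path ``straight up to height $n$, across at height $n$ along the lifted geodesic, then straight down'' already gives $d_\cB(x,y)\le 2\log n+1$.)

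For the lower bound I would construct a distance-non-increasing retraction $\rho\colon\cB(\Gamma)\to\cB(\eta)$, which forces $d_\cB(x,y)\ge d_{\cB(\eta)}((x,1),(y,1))$. Begin with $r\colon\Gamma\to\eta$ sending $z$ to the point of $\eta$ at distance $\min\{n,\max\{0,\tfrac12(d_\Gamma(z,x)-d_\Gamma(z,y)+n)\}\}$ from $x$; since clamping to $[0,n]$ and $z\mapsto d_\Gamma(z,\cdot)$ are $1$-Lipschitz, $r$ is $1$-Lipschitz, with $r(x)=x$ and $r(y)=y$. Then lift $r$ vertically: send a point at height $h$ over $z\in\Gamma$ to the point at height $h$ over $r(z)\in\eta$. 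Restricted to each half-strip this is a map $(b,h)\mapsto(\phi(b),h)$ between subsets of $\HH^2$ with $|\phi'|\le 1$, hence $1$-Lipschitz, and the pieces agree on the rays over the vertices of $\Gamma$; so $\rho$ is a well-defined $1$-Lipschitz map fixing $(x,1)$ and $(y,1)$. Combining with the upper bound gives $d_\cB(x,y)=\arccosh(1+n^2/2)$.

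Finally one checks $\arccosh(1+n^2/2)\approx_1 2\log n$: writing the left side as $\log(1+\tfrac{n^2}{2}+n\sqrt{1+\tfrac{n^2}{4}})$, its difference from $2\log n=\log n^2$ is $\log\!\big(\tfrac1{n^2}+\tfrac12+\sqrt{\tfrac1{n^2}+\tfrac14}\big)$, which decreases from $\log\tfrac{3+\sqrt5}{2}=2\log\tfrac{1+\sqrt5}{2}<1$ at $n=1$ to $0$ as $n\to\infty$, hence lies in $[0,1)$. I expect the only step needing real care to be the construction of $\rho$: checking it is well-defined (consistency on the rays over vertices of $\Gamma$), that it is $1$-Lipschitz (the bound $|\phi'|\le 1$ on an edge is exactly $1$-Lipschitzness of $r$ there), and that a $1$-Lipschitz map of length spaces does not increase distances; the remaining steps are routine computations in $\HH^2$.
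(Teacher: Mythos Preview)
Your proof is correct and follows essentially the same approach as the paper: reduce to the strip $R\subset\HH^2$ over a $\Gamma$-geodesic, then bound $|\arccosh(1+n^2/2)-2\log n|$. The paper justifies the reduction by the terse remark that any $\cB(\Gamma)$-geodesic projects to (the image of) a $\Gamma$-geodesic, whereas your explicit $1$-Lipschitz retraction $\rho$ is a cleaner and more rigorous way to carry out that same step.
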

\begin{proof}
	Any geodesic in $\cB(\Gamma)$ will project to the image of a geodesic in $\Gamma$,
	so it suffices
	to check the bound in the hyperbolic plane, for points $(0,1)$ and $(t,1)$, with $t \geq 1$.
	But then
	$d_\cB((0,1),(t,1)) = \arccosh(1+\frac{t^2}{2})$, and
	\[
		\left| \arccosh\left(1+\frac{t^2}{2}\right) - 2\log(t) \right|
	\]
	is bounded (by $1$) for $t \geq 1$.
\end{proof}

\begin{definition}\label{def-rel-hyp}
	Suppose $G$ is a finitely generated group, and $\cP = \{P_1, P_2, \ldots, P_n\}$
	a collection of finitely generated subgroups of $G$.
	Let $S$ be a finite generating set for $G$, so that $S \cap P_i$ generates
	$P_i$ for each $i=1, \ldots, n$.
	
	Let $\Gamma(G)=\Gamma(G,S)$ be the Cayley graph of $G$ with respect to $S$, with word metric $d_G$.
	Let $Y$ be the disjoint union of $\Gamma(G,S)$ and copies $\cB_{hP_i}$ of $\cB(\Gamma(P_i, S \cap P_i))$ 
	for each left coset $hP_i$ of each $P_i$.
	Let $X = X(G, \cP) = Y / \sim$, where for each left coset $h P_i$ and each $g \in h P_i$ 
	the equivalence relation $\sim$ identifies $g \in \Gamma(G, S)$ with $(g,1) \in \cB_{h P_i}$.
	We endow $X$ with the induced path metric $d$, which makes $(X, d)$ a proper, geodesic metric space. 
	
	We say that $(G, \cP)$ is \emph{relatively hyperbolic} if $(X, d)$ is Gromov hyperbolic,
	and call the members of $\cP$ \emph{peripheral subgroups}.
\end{definition}
This is equivalent to the other usual definitions of (strong) relative hyperbolicity;
see \cite{Bow-99-rel-hyp}, \cite[Theorem 3.25]{Gro-Man-08-dehn-rel-hyp} or \cite[Theorem 5.1]{Hr-relqconv}.
(Note that the horoballs of Bowditch and of Groves-Manning are quasi-isometric.)

Let $\cO$ be the collection of all disjoint (open) horoballs in $X$, that is,
the collection of all connected components of $X \setminus \Gamma(G,S)$.
Note that $G$ acts properly and isometrically on $X$,
cocompactly on $X \setminus \left( \bigcup_{O \in \cO} O \right)$,
and the stabilizers of the sets $O \in \cO$ are precisely the conjugates
of the peripheral subgroups. Subgroups of conjugates of peripheral subgroups are called \emph{parabolic subgroups}.

The \emph{boundary} of $(G, \cP)$ is the set $\bdry (G, \cP) = \bdry X(G, \cP) = \bdry X$.
Choose a basepoint $w \in X$, and denote the Gromov product in $X$ by $(\cdot|\cdot)=(\cdot|\cdot)_w$;
as $X$ is proper, this can be defined as $(a|b) = \inf \{d(w, \gamma)\}$, where the infimum is taken over all
(bi-infinite) geodesic lines $\gamma$ from $a$ to $b$; such a geodesic is denoted by $(a,b)$.

A \emph{visual metric} $\rho$ on $\bdry X$ with parameters $C_0, \eps>0$ is a metric so that
for all $a, b \in \bdry X$ we have
$e^{-\epsilon (a|b)}/C_0\leq \rho(a,b)\leq C_0 e^{-\epsilon (a|b)}$.
As $X$ is proper and geodesic, for every $\epsilon>0$ small enough there exists $C_0$
and a visual metric with parameters $C_0,\eps$, see e.g.\ \cite[Proposition 7.10]{Ghys-dlH-90-hyp-groups}.
We fix such choices of $\rho$, $\eps$ and $C_0$ for the rest of the paper.

For each $O\in\cO$ the set $\bdry O$ consists of a single point $a_O \in \bdry X$ called a \emph{parabolic point}.
We also set $d_O=d(w,O)$.

Horoballs can also be viewed as sub-level sets of Busemann functions.

\begin{definition}
	Given a point $c \in \bdry X$, and basepoint $w \in X$,   The \emph{Busemann function} corresponding to $c$ and $w$
	is the function $\beta_c( \cdot, w) : X \ra \R$ defined by
	\[
		\beta_c(x, w) = \sup_\gamma\left\{\limsup_{t \ra \infty} ( d(\gamma(t), x) - t )\right\},
	\]
where the supremum is taken over all geodesic rays $\gamma: [0, \infty) \ra X$
	from $w$ to $c$.
\end{definition}
The supremum in this definition is only needed to remove the dependence on the choice of $\gamma$, however 
for any two rays from $w$ to $c$ the difference 
between the corresponding $\limsup$ values is bounded as a function of the hyperbolicity 
constant only, see e.g. \cite[Lemma 8.1]{Ghys-dlH-90-hyp-groups}.

\begin{lemma}\label{horob}
 There exists $C = C(\eps, C_0, X)$ so that for each $O\in\cO$ we have
$$\{x\in X: \beta_{a_O}(x,w)\leq -d_O-C\}\subseteq O\subseteq \{x\in X: \beta_{a_O}(x,w)\leq -d_O+C\}.
$$
\end{lemma}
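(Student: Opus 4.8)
The plan is to compare the Busemann function $\beta_{a_O}(\cdot, w)$ with the signed distance to the horoball $O$, using the fact (Lemma \ref{lem-horoball-dist}) that $\cB(\Gamma)$ is itself a horoball-like region whose ``depth'' is governed by a Busemann function in $X$. Concretely, fix $O \in \cO$, write $a = a_O$ for its parabolic point, and let $\gamma$ be a geodesic ray from $w$ to $a$. Since $\bdry O = \{a\}$ and $O$ is a uniformly quasiconvex subset of the hyperbolic space $X$ (its intrinsic metric is the horoball metric of Lemma \ref{lem-horoball-dist}, which is hyperbolic with one-point boundary, and the inclusion $O \hookrightarrow X$ is a quasi-isometric embedding with constants depending only on $X$), any geodesic ray in $X$ converging to $a$ must eventually enter and stay in a bounded neighbourhood of $O$; conversely $O$ lies in a bounded neighbourhood of any such ray near its end. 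The main work is to make both of these ``bounded neighbourhood'' statements quantitative with a constant depending only on $\eps, C_0, X$ (so, ultimately, only on $\del$, the hyperbolicity constant, and the geometry of the peripheral horoballs, which is uniform).

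The key steps, in order, would be: (i) Observe that the ``vertical'' ray in the copy of $\cB(\Gamma(P_i, S\cap P_i))$ glued along $gP_i$ — i.e.\ the ray $\{v\}\times[1,\infty)$ — is, up to bounded error, a geodesic ray in $X$ converging to $a_O$, so one may take this as (a representative of) $\gamma$ near infinity; call its starting vertex $v_0 \in gP_i$, which satisfies $d(w, v_0) \approx d_O$ up to an additive constant depending on $X$. (ii) For a point $x$ deep inside $O$, estimate $\beta_{a}(x,w)$ directly: using Lemma \ref{lem-horoball-dist}, a point at ``height'' $h$ in the horoball is at distance $\approx 2\log h$ from the boundary vertices, from which $\beta_a(x,w) \approx d(w, v_0) - 2\cdot(\text{something}) \lesssim -d_O - (\text{height term})$, giving the first inclusion once the height is large; for a point $x$ near the frontier of $O$ (height $\approx 1$), the same computation shows $\beta_a(x,w) \gtrsim -d_O - C$, which handles the complementary inclusion. (iii) Combine with $\del$-thinness of triangles/rays in $X$ to absorb the ambiguity in the choice of $\gamma$ into the additive constant $C$, and to pass from ``height in the horoball'' to ``distance into $O$ measured in $X$'', which are comparable only up to the logarithmic distortion but whose sign is preserved.

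The main obstacle I expect is the bookkeeping in step (ii)/(iii): the horoball metric $d_\cB$ and the ambient metric $d$ differ by a logarithmic reparametrisation (Lemma \ref{lem-horoball-dist}), so one must be careful that the Busemann function, which is linear in the ambient metric, is being compared against the correct quantity — the point is that ``$\beta_{a_O}(x,w) \leq -d_O + C$'' should be essentially equivalent to ``$x$ is within bounded ambient distance of the frontier $\partial_X O$ or deeper'', and getting the two-sided bound with a \emph{single} constant $C = C(\eps, C_0, X)$ requires using that the hyperbolicity constant and the geometry of all the glued horoballs are uniform (there are finitely many isomorphism types of peripheral Cayley graphs). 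A clean way to organise this is to prove the inclusions for one model horoball $\cB(\Gamma)$ glued into a $\del$-hyperbolic space and then note that the constants obtained depend only on $\del$ and the edge-length normalisation, hence transfer uniformly to every $O \in \cO$.
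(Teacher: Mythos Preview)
The paper's own proof is a one-line citation: it notes that $-\beta_{a_O}(\cdot,w)$ is a horofunction in Bowditch's sense and invokes the discussion preceding \cite[Lemma~5.4]{Bow-99-rel-hyp}, where it is shown that the horoballs in the cusped-space construction are, up to uniformly bounded error, sublevel sets of horofunctions. So your direct-computation approach is genuinely different from what the paper does; what you gain is self-containment, what you lose is brevity.

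Your plan can be made to work, but as written it has a real gap. Step~(ii) only computes $\beta_{a_O}(x,w)$ for points $x$ \emph{inside} $O$ (either deep or near the frontier). That is enough for the inclusion $O\subseteq\{\beta_{a_O}\le -d_O+C\}$: if $x\in O$ sits at height $h\ge 1$ then, along the lines you sketch, $\beta_{a_O}(x,w)\approx -d_O-\log h\le -d_O$. But the inclusion $\{\beta_{a_O}\le -d_O-C\}\subseteq O$ is the contrapositive assertion that every $x\notin O$ satisfies $\beta_{a_O}(x,w)> -d_O-C$, and nothing in your plan treats points outside the horoball. Knowing that $\beta_{a_O}\approx -d_O$ on the frontier $\partial O$ is not enough on its own; you need an additional step, for example: any geodesic ray from $x\notin O$ to $a_O$ is eventually within $\delta_X$ of the vertical ray and hence enters $O$, so it crosses $\partial O$ at some point $p$, and then $\beta_{a_O}(x,w)\approx\beta_{a_O}(p,w)+d(x,p)\ge\beta_{a_O}(p,w)\gtrsim -d_O$, using that $\beta_{a_O}$ decreases at unit rate along rays to $a_O$. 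Two smaller points: your step~(ii) has the labels of the two inclusions swapped, and the ``$\approx 2\log h$'' should be $\approx\log h$ --- the factor $2$ in Lemma~\ref{lem-horoball-dist} concerns the horizontal distance between boundary vertices, whereas the depth of a point at height $h$ above its own boundary vertex is $\log h$.
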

\begin{proof}
	One can argue directly, or note that $-\beta_{a_O}(\cdot, w)$ is a horofunction according to Bowditch's definition, and
	so the claim follows from the discussion before \cite[Lemma 5.4]{Bow-99-rel-hyp}.
\end{proof}

From now on, for any Gromov hyperbolic metric space $Y$ we denote by $\del_Y$ a hyperbolicity constant of $Y$, i.e.\ 
given any geodesic triangle with vertices in $Y\cup\partial Y$, each side of the triangle is contained in 
the union of the $\delta_Y$-neighbourhoods of the other two sides.

\begin{lemma}
\label{hor-qconv}
 Let $(G, \cP)$ be relatively hyperbolic, and $X=X(G, \cP)$ as above. 
 For any left coset $gP$ of some $P\in\cP$ and any $x,y\in gP$, all geodesics from $x$ to $y$ are contained in the corresponding $O\in\cO$ except for, at most, an initial and a final segment of length at most $2\delta_X+1$.
\end{lemma}

\begin{proof}
 From the definition of a horoball, any ``vertical'' ray 
 $\{v\}\times [1,\infty)$ in $O$ is a geodesic ray in both $X$ and $O$. 
 Also, for any $v \in gP$ and $t \geq 1$, the closest point in $X \setminus O$ to $(v,t) \in O$ 
 is $(v,1)\in \overline{O}$.
 
 Consider a geodesic triangle with sides $\{x\}\times [1,\infty)$, $\{y\}\times [1,\infty)$ 
 and a geodesic $\gamma$ in $X$ from $x$ to $y$. 
 If $p\in\gamma$ has distance greater than $2\del_X+1$ from both $x$ and $y$, 
 then it is $\del_X$ close to a point $q$ on one of the vertical rays, 
 which has distance at least $\del_X+1$ from $\{x,y\}$.
 As $d(q, X \setminus O) = d(q,\{x,y\})$, we have $d(p, X\setminus O) \geq 1$ and $p \in O$.
\end{proof}

Finally, we extend the distance estimate of Lemma~\ref{lem-horoball-dist} to $X$.
\begin{lemma}\label{lem-horoball-dist2}
	Let $(G, \cP)$ be relatively hyperbolic, and $X=X(G, \cP)$ as above.
	There exists $A=A(X) < \infty$ so that
	for any left coset $gP$ of some $P \in \cP$, with metric $d_P$, 
	for any distinct $x, y\in gP$, we have
	\[
		d(x,y) \approx_A 2 \log(d_P(x,y)).
	\]
\end{lemma}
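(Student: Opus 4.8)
The plan is to prove the two estimates $d(x,y)\lesssim 2\log(d_P(x,y))$ and $d(x,y)\gtrsim 2\log(d_P(x,y))$ separately, in both cases by comparing the metric $d$ on $X$ with the intrinsic metric $d_\cB$ of the horoball $\cB:=\cB(\Gamma(P,S\cap P))$ glued into $X$ along $gP$. Since Lemma~\ref{lem-horoball-dist} already gives $d_\cB(x,y)\approx_1 2\log(d_P(x,y))$ for distinct $x,y\in gP$, it suffices to compare $d$ and $d_\cB$ on $gP$.

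The upper bound I would get immediately: every path in $\cB$ is a path in $X$, so the inclusion of the glued copy of $\cB$ into $X$ is $1$-Lipschitz, whence $d(x,y)\le d_\cB(x,y)\le 2\log(d_P(x,y))+1$ by Lemma~\ref{lem-horoball-dist}.

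For the lower bound, the key observation I would exploit is that $d$ and $d_\cB$ already agree up to a bounded error between points lying \emph{deep} inside the horoball, because a geodesic of $X$ joining two such points cannot escape the horoball; then I would push $x$ and $y$ deep and apply the triangle inequality. Concretely: let $a=a_O$ be the parabolic point of the open horoball $O$ glued along $gP$, and recall (as in the proof of Lemma~\ref{horob}) that $-\beta_a(\cdot,w)$ is a horofunction in the $\delta_X$-hyperbolic space $X$. Horoballs in hyperbolic spaces are uniformly quasi-convex, so there is $\kappa=\kappa(\delta_X)$ such that any geodesic with endpoints in $\{\beta_a(\cdot,w)\le -t\}$ stays in $\{\beta_a(\cdot,w)\le -t+\kappa\}$. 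I would fix $D_0:=\kappa+2C+1$ with $C=C(\eps,C_0,X)$ the constant of Lemma~\ref{horob}, and for distinct $x,y\in gP$ let $\hat x,\hat y$ be the points obtained by moving distance $D_0$ straight up the vertical rays of $\cB$ above $x$ and $y$. As $\beta_a(\cdot,w)$ decreases at (essentially) unit rate along those rays while $\beta_a(\cdot,w)\le -d_O+C$ on $gP$ (from Lemma~\ref{horob} and continuity of $\beta_a$, since $gP=\partial O$), one gets $\beta_a(\hat x,w),\beta_a(\hat y,w)\le -d_O-\kappa-C$; hence the $X$-geodesic $[\hat x,\hat y]$ lies in $\{\beta_a(\cdot,w)\le -d_O-C\}\subseteq O\subseteq\cB$, so $d(\hat x,\hat y)=d_\cB(\hat x,\hat y)$. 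Since $d(x,\hat x)\le d_\cB(x,\hat x)=D_0$ and likewise for $y$, the triangle inequality gives
\[
 d(x,y)\ \ge\ d_\cB(\hat x,\hat y)-2D_0 .
\]
Finally, arguing as in the proof of Lemma~\ref{lem-horoball-dist} (projecting a $\cB$-geodesic to a $\Gamma(P,S\cap P)$-geodesic and computing in $\HH^2$ with the points $(0,e^{D_0})$ and $(d_P(x,y),e^{D_0})$) one has $d_\cB(\hat x,\hat y)\ge 2\log(d_P(x,y))-2D_0-C'$ for an absolute constant $C'$, trivially so when $d_P(x,y)\le e^{D_0}$. Combining, $d(x,y)\ge 2\log(d_P(x,y))-4D_0-C'$, and taking $A$ to be the maximum of the constants appearing in the two bounds finishes the proof.

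The step I expect to require the most care is the quasi-convexity / ``deep points'' input: one must verify, with a constant depending only on $\delta_X$ (and the fixed data $\eps,C_0,X$), that $-\beta_a(\cdot,w)$ behaves enough like a genuine horofunction that geodesics between deep points stay in the horoball, and one must deal with the minor annoyance that Lemma~\ref{horob} directly controls $\beta_a$ only on the open horoball while $x,y$ sit on its boundary $gP$ — handled by continuity of $\beta_a$. Alternatively, one can bypass the lower bound altogether by invoking the known fact that the combinatorial horoballs of Groves--Manning embed isometrically up to an additive constant into the cusped space; as those horoballs are quasi-isometric to Bowditch's, this gives $d(x,y)\gtrsim d_\cB(x,y)$ directly, hence the claim via Lemma~\ref{lem-horoball-dist}.
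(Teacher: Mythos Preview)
Your argument is correct and is a detailed unpacking of the paper's proof, which reads in its entirety: ``Follows from Lemma~\ref{lem-horoball-dist} and the hyperbolicity of $X$.'' The upper bound via the $1$-Lipschitz inclusion is immediate, and your lower bound via quasi-convexity of Busemann sublevel sets is exactly the intended use of hyperbolicity (with the minor cosmetic caveat that $D_0$ should absorb an additional $O(\delta_X)$ error coming from the rate at which $\beta_a$ decreases along the vertical ray, rather than being set to exactly $\kappa+2C+1$).
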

\begin{proof}
	Consider a geodesic $\gamma$ from $x$ to $y$. 
	If $d(x,y) \leq 4\del_X+3$, then there is a uniform upper bound of $C = C(X)$ 
	on $d_P(x,y)$ as $d_P$ is a proper function with respect to $d$,
	so this case is done.
	Otherwise, by Lemma \ref{hor-qconv}, $\gamma$ has a subgeodesic $\gamma'$ connecting points $x', y'\in gP$,
	with $\gamma'$ entirely contained in the horoball $O$ that corresponds to $gP$, and
	$d(x, y) \approx_{4\del_X+2} d(x',y')$,
	so $d(x',y') \geq 1$.  In particular, $x' \neq y'$.
	
	As before, $d_P(x,x')$ and $d_P(y,y')$ are both at most $C$.
	Let $d_\cB$ denote the horoball distance for points in $\overline{O}$.
	Notice that $d(x',y') = d_\cB(x',y')$ as 
	$\gamma'$ is a geodesic in $X$ connecting $x'$ to $y'$ entirely contained in $\overline{O}$. 
	Using Lemma~\ref{lem-horoball-dist} we have
\begin{align*}
	d(x,y) & \approx_{4\del_X+2} d(x',y') = d_\cB(x',y')
		\\ & \approx_1 2 \log(d_P(x',y')) \approx_{C'} 2 \log(d_P(x,y)),
\end{align*}
for an appropriate constant $C'=C'(C)$.
\end{proof}

\subsection{Transversality and coned-off graphs}
\label{ssec-transverse-coned}

Our goal here is to define transversality of quasi-isometric embeddings, and
show that a transversal quasi-isometric embedding of $\HH^2$ in $\Gamma=\Gamma(G)$ will
persist if we cone-off the Cayley graph.
Recall that a function $f:X \ra X'$ between metric spaces is a \emph{$(\lambda,C)$-quasi-isometric embedding} (for some $\lambda\geq 1, C \geq 0$) if for all $x,y \in X$,
\[
\frac{1}{\lambda}d(x,y)-C \leq d(f(x),f(y)) \leq \lambda d(x,y)+C.
\]

We continue with the notation of Definition~\ref{def-rel-hyp}.

\begin{definition}\label{transversal}
Let $(G, \cP)$ be a relatively hyperbolic group.
Let $Z$ be a geodesic metric space.
 Given a function $\eta: [0, \infty) \ra [0, \infty)$,
 a quasi-isometric embedding $f:Z\to (G, d_G)$ is \emph{$\eta$-transversal} if 
 for all $M \geq 0$, $g \in G$, and $P \in \cP$, we have 
 $\diam(f(Z)\cap N(gP,M)) \leq \eta(M)$.
 
 A quasi-isometric embedding $f:Z \to G$ is \emph{transversal} if it is $\eta$-transversal for
 some such $\eta$.
\end{definition}

Let $\hat{\Gamma}$ be the coned-off graph of $\Gamma(G)$:
for every left coset $hP$ of every $P \in \cP$, add a vertex to $\Gamma$, and
add edges of length $1/2$ joining this vertex to each $g \in hP$.
Let $c:\Gamma\to\hat{\Gamma}$ be the natural inclusion.
This graph is hyperbolic, by the equivalent definition of relative hyperbolicity
in \cite{Farb-98-rel-hyp} (see also \cite[Definition 3.6]{Hr-relqconv}).
Recall that a \emph{$\lambda$-quasi-geodesic} is a $(\lambda, \lambda)$-quasi-isometric embedding of an interval (which need not be continuous).
\begin{lemma}
\label{tranqgeod}
 Let $(G, \cP)$ be a relatively hyperbolic group.
 If $\alpha: \R \ra (G, d_G)$ is an $\eta$-transversal $\lambda$-quasi-geodesic,
 then $c(\alpha)$ is a $\lambda'$-quasi-geodesic in $\hat{\Gamma}$, where $\lambda'=\lambda'(\lambda, \eta,G, \cP)$.
 Moreover, $\alpha$ is also a $\lambda'$-quasi-geodesic in $X(G,\cP)$.
\end{lemma}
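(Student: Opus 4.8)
The plan is to prove the two statements in sequence, since the second (that $\alpha$ is a quasi-geodesic in $X = X(G,\cP)$) is essentially a stepping stone toward understanding the first, and both rely on the same mechanism: a transversal quasi-geodesic cannot ``cheat'' by diving deep into a horoball, so the only way the metric can shrink under $c$ (or under the inclusion $\Gamma \hookrightarrow X$) is locally, on a bounded scale controlled by $\eta$.

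First I would record the easy inequalities. Since $d \leq d_G$ on $G$, and since the coned-off metric $d_{\hat\Gamma}$ is also dominated by $d_G$ (the cone edges only create shortcuts), in both cases the composition is coarsely Lipschitz with constants depending only on $\lambda$; so the content is the lower bound. For the lower bound, fix $s < t$ in $\R$ and set $x = \alpha(s)$, $y = \alpha(t)$, so that $d_G(x,y) \approx_\lambda |s-t|/\lambda$. I want to show $d(x,y) \gtrsim d_G(x,y)$ up to multiplicative and additive constants depending on $\lambda, \eta, X$. Consider a geodesic $[x,y]$ in $X$; by Lemma~\ref{horob} and the structure of $X$, this geodesic, when it enters an open horoball $O$ associated to a coset $gP$, does so by travelling essentially vertically down and back up, and the portion of $[x,y]$ inside $O$ projects (via closest-point projection to $\Gamma$, or via the horoball retraction) to a subset of $gP$ of $d_G$-diameter bounded by an exponential of its depth. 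The key point is that the two points where $[x,y]$ enters and exits a given horoball $O$ are both within bounded $X$-distance of $gP$, hence — pushing to $\Gamma$ — within bounded $d_G$-distance of $gP$; but $\alpha$ is $\eta$-transversal, so if $x$ and $y$ themselves are to be joined by $\alpha$ and $[x,y]$ spends a long stretch near $gP$, then $|s-t|$ is controlled. More precisely, I would argue that a path in $\Gamma$ following $\alpha$ from $x$ to $y$, of length $\approx \lambda|s-t|$, must ``shadow'' the geodesic $[x,y]$: wherever $[x,y]$ has a horoball excursion into $O = O_{gP}$ of depth $r$, the corresponding sub-path of $\alpha$ is within a bounded neighbourhood of $gP$ and so, by transversality, has $d_G$-diameter at most $\eta(M_0)$ for a universal $M_0 = M_0(X)$; meanwhile by Lemma~\ref{lem-horoball-dist2} the geodesic $[x,y]$ ``saves'' at most $\approx d_G\text{-diam} - 2\log(d_G\text{-diam})$ on that excursion, which is at most $\eta(M_0)$. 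Summing the savings over all excursions (there are at most $\approx \lambda|s-t|$ of them, but I only need that each contributes boundedly and that they are disjoint along $[x,y]$), I get $d(x,y) \geq \length_\Gamma(\alpha|_{[s,t]})/\lambda - (\text{bounded correction per excursion})$. The cleanest packaging: each unit of the geodesic $[x,y]$ outside horoballs corresponds to bounded progress of $\alpha$, and each horoball excursion corresponds to bounded progress of $\alpha$ by transversality, giving $d_G(x,y) \lesssim \lambda' d(x,y) + \lambda'$.

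The same argument applies verbatim to the coned-off graph: an edge of $\hat\Gamma$ between two points of a coset $gP$ is a shortcut of $d_G$-length bounded by $\eta(0) + 2$ (the two endpoints lie on $gP$, so by transversality any two points of $\alpha$ on that coset are $d_G$-close), so replacing a sub-path of $\alpha$ by such a cone-edge saves only a bounded amount; since a geodesic in $\hat\Gamma$ between $c(x)$ and $c(y)$ uses a sequence of cone-edges and $\Gamma$-segments, and the cone-edges used must ``land'' near cosets that $\alpha$ passes near, transversality again bounds the total savings, yielding $d_G(x,y) \lesssim \lambda' d_{\hat\Gamma}(c(x),c(y)) + \lambda'$. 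Finally, $c(\alpha)$ is a genuine path (not merely a coarse object) and its length is bounded by that of $\alpha$, so the upper Lipschitz bound is automatic; hence $c(\alpha)$ is a $\lambda'$-quasi-geodesic in $\hat\Gamma$, with $\lambda' = \lambda'(\lambda, \eta, \hat\Gamma)$.

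I expect the main obstacle to be the bookkeeping in the lower-bound argument: making precise the claim that the ``savings'' of a geodesic $[x,y]$ in $X$ (versus the path traced by $\alpha$ in $\Gamma$) are localized to horoball excursions and to bounded-scale backtracking, and that each such saving is controlled by $\eta$ evaluated at a single universal constant determined by $\delta_X$ and the quasi-geodesic stability constants. The right tool is a stability/Morse-type statement for the inclusion $\Gamma \hookrightarrow X$ (equivalently, for closest-point projection from horoballs to $\Gamma$): a geodesic of $X$ stays, outside horoballs, uniformly close to any path in $\Gamma$ with the same endpoints, with the ``depth'' of each excursion bounding the $d_G$-diameter of the corresponding piece of the $\Gamma$-path. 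Once that is in hand — and it follows from Lemma~\ref{horob}, Lemma~\ref{lem-horoball-dist2}, and $\delta$-hyperbolicity of $X$ — the transversality hypothesis converts every potential shortcut into a bounded error, and the conclusion follows.
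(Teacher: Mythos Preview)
Your outline has the right shape but a real gap at the crucial step. In both halves you assert a tracking property---that the $X$-geodesic (or $\hat\Gamma$-geodesic) from $x=\alpha(s)$ to $y=\alpha(t)$ stays, outside horoballs, close in $d_G$ to $\alpha|_{[s,t]}$, so that each excursion of $[x,y]$ into a horoball $O_{gP}$ has a ``corresponding sub-path of $\alpha$ within a bounded neighbourhood of $gP$''. You claim this follows from Lemmas~\ref{horob}, \ref{lem-horoball-dist2} and $\delta$-hyperbolicity of $X$, but those lemmas only describe the geometry of a single horoball; they do not by themselves give a Morse-type relation between a $\Gamma$-quasi-geodesic and an $X$-geodesic. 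Worse, your formulation ``a geodesic of $X$ stays, outside horoballs, uniformly close to \emph{any path in $\Gamma$ with the same endpoints}'' is false as stated: it holds for $\Gamma$-quasi-geodesics, not arbitrary paths, and establishing even that is a nontrivial structural fact about relatively hyperbolic groups, not a consequence of thin triangles alone. Without this input you cannot identify which cosets the $X$- or $\hat\Gamma$-geodesic visits with cosets that $\alpha$ passes near, so transversality never engages.

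The paper closes exactly this gap by quoting \cite[Lemma~8.8]{Hr-relqconv}: every vertex of a $\hat\Gamma$-geodesic $\hat\gamma$ from $c(x)$ to $c(y)$ lies within a bounded $d_G$-distance $C_2=C_2(\Gamma,\lambda)$ of the $\Gamma$-quasi-geodesic $\alpha|_{[s,t]}$. With that in hand the argument is short: build a coarse-Lipschitz map $\pi':\hat\gamma\to\alpha$ moving points at most $C_2$; on a cone-edge the two endpoints land in $\alpha\cap N_{C_2}(gP)$, hence are $\le\eta(C_2)$ apart by transversality, and on a $\Gamma$-edge they are $\le 2C_2+1$ apart; composing with closest-point projection $\hat\Gamma\to\hat\gamma$ gives $d_G(x,y)\le C\, d_{\hat\Gamma}(x,y)+C$. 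Note also the paper's order is the reverse of yours: it proves the $\hat\Gamma$ bound first and gets the $X$ bound for free from the inequality $d_{\hat\Gamma}\le d_{X}$ on $\Gamma$, so no separate horoball-excursion bookkeeping is needed.
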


\begin{proof}
  On $\Gamma$ we have $\frac{1}{C} d_{\hat{\Gamma}}\leq d_{X} \leq d_G$ for a suitable $C\geq 1$, and $\alpha:\R \ra (G, d_G)$ is
 a $(\lambda,\lambda)$-quasi-isometry.  Therefore, it suffices to show that
 for some $\lambda' = \lambda'(\lambda, \eta, \hat{\Gamma})$, for any $x_1, x_2 \in \alpha$, we have
 \begin{equation}\label{eq-trans-bound}
   d_{\hat{\Gamma}}(x_1, x_2) \geq \tfrac{1}{\lambda'} d_G(x_1, x_2) - \lambda'.
 \end{equation}
 
 Let $\gamma$ be the sub-quasi-geodesic of $\alpha$ with endpoints $x_1, x_2$.
 Let $\hat{\gamma}$ be a geodesic in $\hat{\Gamma}$ with endpoints $c(x_1), c(x_2)$.
 
 Now let $\pi : \hat{\Gamma} \ra \hat{\gamma}$ be a closest point projection map, fixing $x_1, x_2$.
 As $\hat{\Gamma}$ is hyperbolic, such a projection map is \emph{coarsely Lipschitz}:
 there exists $C_1 = C_1(\hat{\Gamma})$ so that for all $x,y \in \hat{\Gamma}$,
 $d_{\hat{\Gamma}}(\pi(x), \pi(y)) \leq C_1 d_{\hat{\Gamma}}(x, y) + C_1$.
 
 By \cite[Lemma 8.8]{Hr-relqconv}, there exists $C_2=C_2(G, \cP, \lambda)$ so that every vertex of 
 $\hat{\gamma}$ is at most a distance $C_2$ from $\gamma$ in $\Gamma$ (not just $\hat{\Gamma}$).
 Let $\pi': (\hat{\gamma}, d_{\hat{\Gamma}}) \ra (\gamma, d_G)$ be a map so that
 for all $x \in \hat{\gamma}$, $d_G(\pi'(x), x) \leq C_2$,
 and assume that $\pi'$ fixes $x_1, x_2$.
 This map is coarsely Lipschitz also.  It suffices to check this for vertices
 $x,y \in \hat{\gamma} \cap \Gamma$ with $d_{\hat{\Gamma}}(x,y) =1$.
 If $x$ and $y$ are connected by an edge of $\Gamma$, then clearly $d_G(\pi'(x),\pi'(y))\leq 2C_2+1$.
 Otherwise, $\pi'(x), \pi'(y)$ are both in $\gamma$ and within distance $C_2$ of the same left coset of
 a peripheral group, so by transversality, $d_G(\pi'(x), \pi'(y)) \leq \eta(C_2)$.
 
 Thus, for $C_3 = \max\{2C_2+1, \eta(C_2)\}$, \eqref{eq-trans-bound} with $\lambda' = C_3 C_1$ follows from
 \begin{align*}
 	d_G(x_1, x_2) & = d_G(\pi' \circ \pi(x_1), \pi' \circ \pi(x_2))
		\leq C_3 d_{\hat{\Gamma}}(\pi(x_1), \pi(x_2)) \\
		& \leq C_3 (C_1 d_{\hat{\Gamma}}(x_1, x_2) + C_1). \qedhere
 \end{align*}
\end{proof}

\begin{proposition}\label{coned-off}
 Suppose $Z$ is a geodesic metric space, and $(G, \cP)$ a relatively hyperbolic group.
 If a quasi-isometric embedding $f:Z\to G$ is transversal then $c\circ f: Z\to \hat{\Gamma}$ is a quasi-isometric embedding, quantitatively.
\end{proposition}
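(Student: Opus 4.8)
The plan is to deduce this from Lemma~\ref{tranqgeod}, which already contains the essential work, by restricting $f$ to geodesics of $Z$. Write $f$ as a $\lambda$-quasi-isometric embedding which is $\eta$-transversal.

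First I would dispose of the coarse upper bound: since every edge of $\Gamma$ is an edge of $\hat{\Gamma}$, the natural map satisfies $d_{\hat{\Gamma}}(c(x),c(y)) \le d_G(x,y)$ for all $x,y \in G$, so $c\circ f$ is coarsely Lipschitz with constants depending only on $\lambda$.

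For the lower bound, fix $z,z'\in Z$ and, using that $Z$ is geodesic, choose a geodesic $\sigma$ in $Z$ from $z$ to $z'$. Then $\alpha=f\circ\sigma$ is a $\lambda$-quasi-geodesic segment in $(G,d_G)$, and it is $\eta$-transversal: its image lies in $f(Z)$, so for all $M\ge 0$, $g\in G$, $P\in\cP$ we get $\diam(\alpha\cap N_M(gP))\le\diam(f(Z)\cap N_M(gP))\le\eta(M)$. Applying the estimate of Lemma~\ref{tranqgeod} to $\alpha$ with the two endpoints $x=f(z)$, $x'=f(z')$ produces $\lambda'=\lambda'(\lambda,\eta,\hat{\Gamma})$ with
\[
 d_{\hat{\Gamma}}(c f(z), c f(z')) \ge \tfrac1{\lambda'} d_G(f(z),f(z')) - \lambda' \ge \tfrac1{\lambda\lambda'} d_Z(z,z') - \lambda'',
\]
where the second inequality uses that $f$ is a $\lambda$-quasi-isometric embedding and $\lambda''$ depends only on $\lambda,\lambda'$. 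Combined with the coarse Lipschitz bound, this is exactly the assertion that $c\circ f$ is a quasi-isometric embedding.

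The one subtlety to address is that Lemma~\ref{tranqgeod} is stated for bi-infinite quasi-geodesics $\R\to G$, whereas $\alpha$ is only a segment. I would resolve this by observing that its proof uses only the subgeodesic of the quasi-geodesic joining the two chosen points (together with a geodesic in $\hat{\Gamma}$ between their images and the closest-point projection onto it), and so applies verbatim to quasi-geodesic segments; alternatively, one can simply state Lemma~\ref{tranqgeod} for segments from the outset. This is the only place any care is needed — the rest is a routine chaining of quasi-isometry inequalities.
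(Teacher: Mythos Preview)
Your proof is correct and follows the same approach as the paper: both reduce the proposition to Lemma~\ref{tranqgeod} by applying it to $f$ restricted to geodesics of $Z$. The paper's proof is a one-line appeal to that lemma, while you spell out the upper and lower bounds separately and explicitly note (correctly) that the proof of Lemma~\ref{tranqgeod} only uses a finite subgeodesic and hence applies to segments.
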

\begin{proof}
By Lemma \ref{tranqgeod}, whenever $\gamma$ is a geodesic in $Z$, $c\circ f(\gamma)$ is a quasi-geodesic with uniformly bounded constants in $\hat{\Gamma}$.
\end{proof}

\subsection{Stability under peripheral fillings}
\label{ssec-periph-fill}
We now consider peripheral fillings of $(G, \{P_1, \ldots, P_n\})$.
The results here are not used in the remainder of the paper.

Suppose $N_i\triangleleft P_i$ are normal subgroups. 
The \emph{peripheral filling} of $G$ with respect to $\{N_i\}$ is defined as
$$G(\{N_i\})=G/\ll \bigcup_i N_i\gg .$$
(See \cite{Osin-fill} and \cite[Section 7]{Gro-Man-08-dehn-rel-hyp}.)
Let $p:G\to G(\{N_i\})$ be the quotient map.

\begin{proposition}\label{perfill}
 Let $(G, \cP)$ be a relatively hyperbolic group, and $G(\{N_i\})$ a peripheral filling of $G$, as
 defined above.
 
 Let $Z$ be a geodesic metric space, and
 suppose that $f: Z \ra G$ is an $\eta$-transversal $(\lambda,\lambda)$-quasi-isometric embedding.
 There exists $R_0 = R_0(\eta, \lambda, G, \cP)$ so that if $B(e,R_0)\cap N_i=\{e\}$ for each $i$,
 then $p\circ f:Z\to G(\{N_i\})$ is a quasi-isometric embedding.
\end{proposition}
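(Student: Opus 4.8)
The plan is to reduce the statement to showing that $p\circ f$ sends geodesics of $Z$ to uniform quasi-geodesics of the filling, and then to prove that by a local-to-global argument carried out in the \emph{filled} cusped space. Write $\bar G = G(\{N_i\})$, $\bar P_i = P_i/N_i$, $X = X(G,\cP)$, and let $\bar X = X(\bar G,\{\bar P_i\})$ with its natural surjective $1$-Lipschitz quotient map $\pi\colon X\to\bar X$ extending $p$. Fix a geodesic $\sigma$ of $Z$ and set $\alpha = f\circ\sigma$; then $\alpha$ is an $\eta$-transversal $\lambda$-quasi-geodesic of $(G,d_G)$, so by Lemma~\ref{tranqgeod} it is a $\lambda'$-quasi-geodesic of $X$ with $\lambda' = \lambda'(\lambda,\eta,\hat\Gamma)$. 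Since the word metric of $\bar G$ dominates $d_{\bar X}$, and $d_{\bar G}(pf z, pf z')\le d_G(fz,fz')\lesssim_\lambda d_Z(z,z')$ automatically, it suffices to produce $R_0 = R_0(\eta,\lambda,G,\cP)$ such that, for deep enough fillings, $\pi\circ\alpha = p\circ f\circ\sigma$ is a quasi-geodesic of $\bar X$ with constants independent of $\sigma$.

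The inputs I would take from the theory of peripheral fillings \cite{Osin-fill,Gro-Man-08-dehn-rel-hyp} are, for all sufficiently deep fillings: (i) $\ll\{N_i\}\gg \cap B^G_{R_1}(e) = \{e\}$, where $R_1 = R_1(R_0, G, \cP)\to\infty$ as $R_0\to\infty$; (ii) $\bar X$ is $\bar\delta$-hyperbolic with $\bar\delta$ depending only on the hyperbolicity constant of $X$; and (iii) $\pi$ is depth-preserving and restricts to an isometry between the parts of $X$ and of $\bar X$ of horoball-depth below $\sim\log R_0$, so that a path of $\bar X$ of length $\lesssim\log R_0$ (which therefore stays in that shallow region) lifts, from any prescribed initial point, to a path of $X$ of the same length. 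From these I would extract a local preservation-of-distances statement: there is $R_2 = R_2(R_0,G,\cP)\to\infty$ with $d_{\bar X}(\pi x,\pi y) = d_X(x,y)$ whenever $x,y\in G$ and $d_G(x,y)\le R_2$. Here ``$\le$'' is immediate; for ``$\ge$'', a geodesic $\bar\gamma$ of $\bar X$ from $\pi x$ to $\pi y$ has length $\le d_X(x,y)\le R_2\lesssim\log R_0$, hence makes only shallow horoball excursions, so by (iii) it lifts from $x$ to a path ending at some $y''$ with $\pi(y'') = \pi(y)$; a computation with (iii) bounds $d_G(x,y'')$ (hence $d_G(y,y'')$) by a quantity still $< R_1$, so (i) forces $y''=y$, and then $d_X(x,y)\le\ell(\bar\gamma) = d_{\bar X}(\pi x,\pi y)$.

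Granting this, the endgame is quick. Because $\alpha = f\circ\sigma$ is coarsely Lipschitz in $\Gamma(G)$, there is $L_0 = L_0(R_0,\lambda,G,\cP)\to\infty$ so that any subpath of $\alpha$ whose domain has length $\le L_0$ has $d_G$-diameter $\le R_2$; combining the preservation-of-distances estimate with the fact that $\alpha$ is a $\lambda'$-quasi-geodesic of $X$ shows $\pi\circ\alpha$ is an $L_0$-local $\lambda'$-quasi-geodesic of $\bar X$. Since $\bar X$ is $\bar\delta$-hyperbolic with $\bar\delta$ independent of the (deep) filling, the standard local-to-global theorem for quasi-geodesics in hyperbolic spaces applies once $L_0$ exceeds the threshold depending on $\lambda'$ and $\bar\delta$; choosing $R_0 = R_0(\eta,\lambda,G,\cP)$ large enough secures this, and then $\pi\circ f\circ\sigma$ is a $\mu$-quasi-geodesic of $\bar X$ with $\mu = \mu(\lambda',\bar\delta)$, uniformly in $\sigma$. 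Hence $d_{\bar G}(pf z, pf z')\ge d_{\bar X}(\pi f z,\pi f z')\gtrsim_\mu d_Z(z,z')$, which together with the automatic upper bound shows $p\circ f$ is a quasi-isometric embedding.

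I expect the principal difficulty to be getting the interface with the filled cusped space exactly right: making precise the shallow-part isometry and unique-lifting statement (iii) and the uniform hyperbolicity (ii) in the form used, and, inside the preservation-of-distances step, bounding $d_G(x,y'')$ correctly --- the lift of $\bar\gamma$ can shadow long arcs of peripheral cosets, so one must estimate its excursions in terms of $\log R_0$ rather than in terms of $\ell(\bar\gamma)$, which is exactly where the depth threshold in (i)/(iii) has to be calibrated. The role of transversality, by contrast, is isolated to a single point: it is precisely what upgrades $\alpha$ to a quasi-geodesic of $X$ via Lemma~\ref{tranqgeod}, hence what prevents $\alpha$ from tracking peripheral cosets in a way the filling could collapse; without it the conclusion genuinely fails.
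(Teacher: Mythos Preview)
Your approach is essentially the same as the paper's: reduce to geodesics, upgrade $f\circ\sigma$ to a quasi-geodesic of $X$ via Lemma~\ref{tranqgeod}, then use a local-isometry property of the quotient map together with a local-to-global principle in a uniformly hyperbolic target. The paper, however, does not work with the filled cusped space $\bar X$ directly. Instead it imports from \cite{DGO-hypembrot} a hyperbolic quotient $Y$ (of the cone-off, not the cusped space) together with a map $\phi\colon N_L(G)\subset X\to Y$ that is an isometry on each ball $B(g,L)$, $g\in G$, with $L\to\infty$ as $R_0\to\infty$, and a $1$-Lipschitz $\psi\colon G(\{N_i\})\to Y$ satisfying $\psi\circ p=\phi\circ\iota$. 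It then straightens $f(\gamma)$ to a geodesic $\alpha$ in $X$ (which stays $C_1$-close to $G$), observes that $\phi|_{B(x,8\delta'+1)}$ is an isometry along $\alpha$, and applies the local-geodesic criterion \cite[III.H.1.13(3)]{BH-99-Metric-spaces} to conclude $\phi(\alpha)$ is a global quasi-geodesic in $Y$; the lower bound on $d_{G(\{N_i\})}$ then follows from $\psi\circ p = \phi\circ\iota$.

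What this buys the paper is exactly the step you flag as the principal difficulty: the DGO package hands over the local-isometry statement ready-made, so there is no need to set up the shallow-part isometry, carry out the path-lifting, or run the $d_G(y,y'')<R_1$ calibration. Your route through $\bar X$ is perfectly viable and arguably more self-contained (it stays within the Groves--Manning framework already used in the paper), but you pay for it with the lifting argument; note in particular that your lift of $\bar\gamma$ ends at a point $y''$ that a priori lies in a horoball rather than in $G$, so you should first push it to its exit point on the relevant coset before invoking (i). One further minor simplification available to you, mirroring the paper, is to replace $f\circ\sigma$ by an $X$-geodesic with the same endpoints before projecting: since that geodesic stays boundedly close to $G$ (because $f(\gamma)\subset G$), the local-isometry hypothesis applies along it, and you can use the local-\emph{geodesic} criterion rather than the local-quasi-geodesic one.
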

\begin{proof}
 We will use results from \cite{DGO-hypembrot} (see also \cite{DelGr-courbure,Co-rot}).
 For any sufficiently large $R_0$ we combine Propositions 7.7 and 5.28 in \cite{DGO-hypembrot} to show that
 $G(\{N_i\})$ acts on a certain $\delta'$-hyperbolic space $Y$ (namely $Y=\mathbb{X}'/Rot$ for $\mathbb{X}'$ and $Rot$ arising from $\mathcal{R}$
 in the notation of \cite{DGO-hypembrot}), with the following properties:
\begin{enumerate}
 \item $\delta'$ only depends on the hyperbolicity constant of $X = X(G, \cP)$,
 \item there is a $1$-Lipschitz map $\psi:G(\{N_i\})\to Y$ (this follows from the construction
	of $Y$ and the fact that the inclusion of $G$ in $X$ is $1$-Lipschitz),
 \item there is a map $\phi:(N(G,L)\subseteq X)\to Y$ such that, for each $g\in G$, 
	$\phi|_{B(g,L)}$ is an isometry, where $L=L(R_0, X)\to\infty$ as $R_0\to\infty$.
 \item $\psi\circ p=\phi\circ\iota$, where $\iota:G\to X$ is the natural inclusion.
\end{enumerate}
Let $\gamma$ be any geodesic in $Z$. 
By Lemma~\ref{tranqgeod}, $\iota \circ f(\gamma)$ is a $\lambda'$-quasi-geodesic in $X$, for $\lambda' = \lambda'(\lambda, \eta, G, \cP)$.
Let $\alpha$ be a geodesic in $X$ connecting the endpoints of $\iota \circ f(\gamma)$.
Let $C_1 = C_1(\delta', \lambda')$ bound the distance between each point on $\alpha$ and $G\subseteq X$.

Suppose that $L$ as in $(3)$ satisfies $L\geq\ C_1+8\delta'+1$. Then for each $x\in\alpha$ we have that $\phi|_{B(x,8\delta'+1)}$
is an isometry, and so \cite[Theorem III.H.1.13-(3)]{BH-99-Metric-spaces} gives that $\phi(\alpha)$ is a $C_2$-quasi-geodesic,
where $C_2 = C_2(\delta')$.
This implies that $(\phi\circ\iota\circ f)(\gamma)$ is a $C_3$-quasi-geodesic, with $C_3 = C_3(C_1, C_2)$.
Let $x_1, x_2$ be the endpoints of $\gamma$.
Using $(2)$ and $(4)$ above, we see that
\begin{align*}
	 d_{G(\{N_i\})}( p\circ f(x_1), p\circ f(x_2) ) & \geq 
		d_Y( \psi \circ p\circ f(x_1), \psi \circ p\circ f(x_2) ) \\ 
		& = d_Y( \phi\circ\iota\circ f (x_1), \phi\circ\iota\circ f (x_2) ) \\
		& \geq \tfrac{1}{C_3} d_Z(x_1, x_2) - C_3.
\end{align*}
On the other hand, recall that $p$ is $1$-Lipschitz, so
\begin{equation*}
 d_{G(\{N_i\})}( p\circ f(x_1), p\circ f(x_2) ) \leq d_G (f(x_1), f(x_2)) \leq \lambda d_Z(x_1, x_2) + \lambda.
\end{equation*}
As $\gamma$ was arbitrary, we are done.
\end{proof}

As discussed in the introduction, 
we can use Proposition \ref{perfill} to find interesting examples of relatively hyperbolic groups 
with quasi-isometrically embedded copies of $\HH^2$, but whose peripheral groups are not virtually nilpotent.
We note the following lemma.

\begin{lemma}\label{lem-free-quotients}
	Let $F_4$ be the free group with four generators, and let $R$ be fixed.
	Then there are normal subgroups $\{K_\alpha\}_{\alpha \in \R}$ of $F_4$ so that
	the quotient groups $F_4/K_\alpha$ are amenable but not virtually nilpotent,
	so that if $\alpha \neq \beta$ then $F_4/K_\alpha$ and $F_4/K_\beta$ are not quasi-isometric,
	and so that $K_\alpha \cap B(e,R) = \{e\}$.	
\end{lemma}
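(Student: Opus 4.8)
The plan is to construct the quotient groups $F_4/K_\alpha$ as amenable groups of subexponential-but-not-polynomial growth, with controlled quasi-isometry invariants, while keeping a prescribed ball injective. The natural candidates are the Grigorchuk-type groups, or more precisely the groups constructed by Grigorchuk realizing an uncountable family of pairwise non-quasi-isometric groups of intermediate growth; indeed Grigorchuk's original uncountable family $\{G_\omega\}$ (indexed by sequences $\omega \in \{0,1,2\}^{\mathbb{N}}$) has the property that for a continuum of values of $\omega$ the groups $G_\omega$ have distinct growth functions up to the natural equivalence, hence are pairwise non-quasi-isometric, and all are amenable (being of subexponential growth) but not virtually nilpotent (having superpolynomial growth). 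Each $G_\omega$ is generated by four elements (one generator of order $2$ and three generators $b_\omega, c_\omega, d_\omega$ with $b_\omega c_\omega d_\omega = 1$ of order $2$), so $G_\omega$ is a quotient of $F_4$; write $G_\omega = F_4/L_\omega$.

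The remaining issue is the ball-injectivity condition $K_\alpha \cap B_R(e) = \{e\}$, which the $L_\omega$ need not satisfy directly, since relations appear at bounded length. To fix this I would pass to a ``scaled-up'' version: replace each generator $s$ of $F_4$ by $s^{M}$ for a large integer $M = M(R)$, i.e. consider the subgroup $F_4' = \langle s_1^M, \ldots, s_4^M\rangle \leq F_4$, which is itself free of rank $4$ and for which the only elements of $F_4'$ of $F_4$-length less than $M$ is the identity. Then set $K_\alpha$ to be the normal closure in $F_4$ of the preimage in $F_4'$ (under the isomorphism $F_4 \cong F_4'$, $s_i \mapsto s_i^M$) of $L_{\omega(\alpha)}$, where $\alpha \mapsto \omega(\alpha)$ is an injection from $\mathbb{R}$ into a continuum of pairwise-non-QI indices. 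Since any nontrivial element of $K_\alpha$, being a product of conjugates of elements of $F_4'$, has a reduced form whose syllables are $M$-th powers, a short argument with reduced words in free groups shows that no nontrivial element of $K_\alpha$ can lie in $B_{M-1}(e)$; taking $M > R$ gives the injectivity on $B_R(e)$. The quotients $F_4/K_\alpha$ are then commensurable (indeed isomorphic up to finite index considerations) with the $G_{\omega(\alpha)}$ — more carefully, $F_4/K_\alpha$ surjects onto and is closely related to $G_{\omega(\alpha)}$, and one should check that the rescaling does not collapse the growth-type distinctions; in fact $F_4/K_\alpha$ contains $F_4'/(\text{normal closure of }L_\omega\text{ in }F_4') \cong G_\omega$ as a finite-index subgroup, so it is quasi-isometric to $G_\omega$, preserving both amenability, the failure of virtual nilpotence, and the pairwise non-quasi-isometry.

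**The main obstacle** I anticipate is organizing the rescaling cleanly so that all three conclusions survive simultaneously: the ball-injectivity is easy once the rescaling is in place, and amenability plus non-virtual-nilpotence are quasi-isometry invariants (the latter by Gromov's polynomial growth theorem) so they transfer along the finite-index inclusion; the delicate point is ensuring that passing from $G_\omega$ to the enlarged group $F_4/K_\alpha$ keeps the family pairwise non-quasi-isometric, which reduces to checking that $G_\omega$ has finite index in $F_4/K_\alpha$ (or at least that $F_4/K_\alpha$ and $G_\omega$ are quasi-isometric) — this is where care with the definition of $K_\alpha$ via the free subgroup $F_4'$ matters. An alternative, possibly cleaner, route avoids rescaling entirely: take any one finitely presented (or recursively presented) amenable non-virtually-nilpotent group, observe that adding the relations only affects a bounded ball, and then instead produce the continuum by a separate device — e.g. forming restricted wreath-type or lamplighter-over-intermediate-growth constructions parametrized by $\mathbb{R}$ — but the Grigorchuk family already packages ``continuum many, pairwise non-QI, amenable, not virtually nilpotent, $4$-generated'' in one place, so I would default to that and spend the write-up budget on the rescaling lemma for free groups.
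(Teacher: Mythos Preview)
Your identification of the Grigorchuk family as the source of uncountably many pairwise non-quasi-isometric, amenable, non-virtually-nilpotent $4$-generated groups is exactly right, and matches the paper. The gap is in the rescaling step.

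The subgroup $F_4' = \langle s_1^M,\ldots,s_4^M\rangle$ has \emph{infinite} index in $F_4$ (already $\langle a^2,b^2\rangle$ has infinite index in $F_2$, since its normal closure has quotient $\mathbb{Z}/2 * \mathbb{Z}/2$). Consequently your claim that the image of $F_4'$ in $F_4/K_\alpha$ has finite index is false in general, and with it the claim that $F_4/K_\alpha$ is quasi-isometric to $G_\omega$. A clean test case: take $G = \mathbb{Z}/2 * \mathbb{Z}/2$, so $L = \langle\!\langle a^2,b^2\rangle\!\rangle_{F_2}$. After rescaling by $M$ your $K$ is $\langle\!\langle a^{2M},b^{2M}\rangle\!\rangle_{F_2}$, and $F_2/K \cong \mathbb{Z}/(2M) * \mathbb{Z}/(2M)$, which for $M\geq 2$ is virtually nonabelian free, hence not amenable and certainly not quasi-isometric to the virtually cyclic group $G$. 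So the rescaling can destroy amenability outright; there is no reason it should preserve the growth types of the $G_\omega$ either. (Your ball-injectivity claim $K_\alpha\cap B_{M-1}(e)=\{e\}$ is in fact correct, since $K_\alpha$ lies in the kernel of $F_4\to (\mathbb{Z}/M)^{*4}$; but that is the easy part.)

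The paper bypasses all of this with a one-line trick you may have overlooked: free groups are residually finite, so there is a \emph{finite-index} normal subgroup $K\lhd F_4$ with $K\cap B_R(e)=\{e\}$. Setting $K_\alpha = K'_\alpha\cap K$ immediately gives the ball condition (since $K_\alpha\subseteq K$), and the surjection $F_4/K_\alpha \to F_4/K'_\alpha$ has finite kernel $K'_\alpha/(K'_\alpha\cap K)\cong K'_\alpha K/K \leq F_4/K$, so $F_4/K_\alpha$ is quasi-isometric to $G_\omega$ and inherits all the required properties.
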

\begin{proof}
It is shown in \cite{Gri-uncountgrowth} that there is an uncountable family of $4$-generated groups 
$\{F_4/K'_\alpha\}_{\alpha\in\R}$ of intermediate growth with distinct growth rates. In particular,
these groups are amenable, not virtually nilpotent and pairwise non-quasi-isometric. 
To conclude the proof, let $K$ be a finite index normal subgroup of $F_4$ so that $K \cap B(e,R) = \{e\}$,
and let $K_\alpha = K'_\alpha \cap K \vartriangleleft F_4$.
As $F_4/K_\alpha$ is a finite extension of $F_4/K'_\alpha$, it inherits all the properties above.
\end{proof}

\begin{example}\label{nonvnilpquot}
	Let $M$ be a closed hyperbolic $3$-manifold so that $G= \pi_1(M)$ is hyperbolic relative to a subgroup
	$P \leq G$ that is isomorphic to $F_4$.
	For example, let $M'$ be a compact hyperbolic manifold whose boundary $\partial M'$ is a totally geodesic
	surface of genus $3$, and let $M$ be the double of $M'$ along $\partial M'$.
	Observe that $\pi_1(M)$ is hyperbolic relative to $\pi_1(\partial M')$, and $\pi_1(\partial M')$ is hyperbolic
	relative to a copy of $\pi_1(S')=F_4$, where $S' \subset \partial M'$ is a punctured genus $2$ subsurface.
	Thus $\pi_1(M)$ is hyperbolic relative to $\pi_1(S')$.
	
	Since $G$ is hyperbolic with $2$-sphere boundary, and $P$ is quasi-convex in $G$ with Cantor set boundary (Lemma~\ref{lem-x-asymp-tree-graded}(2)),
	the hypotheses of Theorem \ref{thm-main-hyp-rel-hyp} are satisfied for $\cP_1 = \emptyset$ and $\cP_2 = \{P\}$.
	Therefore, we find a transversal quasi-isometric embedding of $\HH^2$ into $G$.
	
	Let $R_0$ be chosen by Proposition~\ref{perfill}.
	As $P$ is quasi-convex in $G$, we choose $R$ so that for $x \in P$, if $d_P(e, x) \geq R$ then $d_G(e, x) \geq R_0$.
	Now let $\{K_\alpha\}$ be the subgroups constructed in Lemma \ref{lem-free-quotients}.
	By Proposition~\ref{perfill}, for each $\alpha \in \R$ the peripheral filling
	$G_\alpha=G/\ll K_\alpha \gg$ is 
	relatively hyperbolic and contains a quasi-isometrically embedded copy of $\HH^2$. 
	
	As $P/K_\alpha$ is non-virtually cyclic and amenable, it does not have a non-trivial relatively hyperbolic structure.
	Therefore, $G_\alpha$ is not hyperbolic relative to virtually nilpotent subgroups, for in any peripheral
	structure $\cP$, some peripheral group $H \in \cP$ must be quasi-isometric to $P/K_\alpha$ by \cite[Theorem 4.8]{BDM-thick}.
	
	Finally, if $\alpha\neq \beta$ then $G_\alpha$ is not quasi-isometric to $G_\beta$ by \cite[Theorem 4.8]{BDM-thick}
	as $P/K_\alpha$ and $P/K_\beta$ are not quasi-isometric.
\end{example}

\section{Separation of parabolic points and horoballs}
\label{sec-separation-in-bdry}

In this section we study how the boundaries of peripheral subgroups are separated in $\bdry X$.
We also establish a preliminary result on quasi-isometrically embedding copies of $\HH^2$.

\subsection{Separation estimates}
We begin with the following lemma.

\begin{lemma}\label{lem-x-asymp-tree-graded}
Let $(G, \cP_1)$ and $(G, \cP_1 \cup \cP_2)$ be relatively hyperbolic groups,
where all peripheral subgroups in $\cP_2$ are hyperbolic groups ($\cP_2$ is allowed to be empty),
and set $X=X(G,\cP_1)$.
Let $\cH$ denote the collection of the horoballs of $X$ and the left cosets
of the subgroups in $\cP_2$; more precisely, the images of those left cosets under the natural inclusion $G\hookrightarrow X$.
For $H \in \cH$, let $d_H = d(w, H)$.

Then the collection of subsets $\cH$ has the following properties.
\begin{enumerate}
 \item For each $r\geq 0$ there is a uniform bound $b(r)$ on $\diam(N(H,r)\cap N(H',r))$ for each $H,H'\in\cH$ with $H\neq H'$.
 \item Each $H \in \cH$ is uniformly quasi-convex in $X$.
 \item There exists $R$ such that, given any $H \in \cH$ and any geodesic ray $\gamma$ connecting
 $w$ to $a\in \bdry H$, the subray of $\gamma$ whose
 starting point has distance $d_H$ from $w$ is entirely contained in $N(H,R)$.

\end{enumerate}
\end{lemma}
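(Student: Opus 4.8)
The plan is to deduce all three properties from standard facts about relatively hyperbolic groups, packaged through the statement that $(G, \cP_1 \cup \cP_2)$ is relatively hyperbolic. The key observation is that the collection $\cH$ is, up to uniformly bounded Hausdorff distance, the collection of peripheral subsets of the model space $X(G, \cP_1 \cup \cP_2)$: the horoballs of $X = X(G, \cP_1)$ are (coarsely) the horoballs of $X(G,\cP_1\cup\cP_2)$ over cosets in $\cP_1$, while a left coset $gP$ with $P \in \cP_2$ sits at bounded distance from the corresponding horoball of $X(G,\cP_1\cup\cP_2)$ (by Lemma~\ref{lem-horoball-dist2}, the coset $gP$ and the vertex-set of its horoball are at Hausdorff distance $O(\log)$ — actually one should rather compare the two cusped spaces directly, observing that $X(G,\cP_1\cup\cP_2)$ is obtained from $X$ by gluing horoballs onto the cosets $gP$, and $X$ and $X(G,\cP_1\cup\cP_2)$ are quasi-isometric via a map that is the identity on $\Gamma(G,S)$ and sends each such horoball to a uniformly bounded neighbourhood of its base coset). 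Since $X$ itself is hyperbolic (as $(G,\cP_1)$ is relatively hyperbolic), this quasi-isometry lets us transfer statements about peripheral subsets of the hyperbolic space $X(G,\cP_1\cup\cP_2)$ to statements about $\cH$ in $X$.

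First I would establish (2): each $H \in \cH$ is uniformly quasi-convex in $X$. For horoballs of $X$ this is immediate — they are sublevel sets of Busemann functions (Lemma~\ref{horob}) and horoballs in hyperbolic spaces are quasi-convex with constant depending only on $\delta_X$. For a coset $gP$ with $P \in \cP_2$, quasi-convexity in $X$ follows because $gP$ is quasi-convex in $X(G,\cP_1\cup\cP_2)$ (peripheral subsets of a cusped space are quasi-convex, or alternatively $P$ is relatively quasiconvex hence, being itself hyperbolic and hence undistorted in the relative sense, quasiconvex in the appropriate model), and the quasi-isometry $X(G,\cP_1\cup\cP_2) \to X$ carries $gP$ to a bounded neighbourhood of its image, preserving quasi-convexity.

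Next, (1): the bounded coarse intersection property. This is precisely the "bounded packing"/"isolated peripherals" property of relatively hyperbolic groups applied to $(G, \cP_1\cup\cP_2)$: distinct peripheral cosets of a relatively hyperbolic group have uniformly bounded coarse intersection (see Bowditch, or \cite[Lemma 8.8 / surrounding results]{Hr-relqconv}, or Druţu–Sapir asymptotic tree-graded structure). Distinct members of $\cH$ correspond to distinct peripheral cosets (or horoballs) of $X(G,\cP_1\cup\cP_2)$, and since the comparison map between the two cusped spaces is a quasi-isometry, $N_r(H) \cap N_r(H')$ maps into $N_{r'}(\hat H) \cap N_{r'}(\hat H')$ for the corresponding peripheral subsets $\hat H, \hat H'$ of $X(G,\cP_1\cup\cP_2)$, which has diameter bounded by some $b'(r')$; pulling back gives $b(r)$. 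One subtlety to check: two distinct cosets $gP$, $g'P'$ with $P, P' \in \cP_2$, or a coset and a horoball of $X$, genuinely give distinct peripheral objects downstairs — this is where one uses that $(G,\cP_1\cup\cP_2)$ is assumed relatively hyperbolic (so the enlarged peripheral structure is "honest"), rather than, say, some $gP$ being coarsely contained in a horoball.

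Finally, (3): a geodesic ray $\gamma$ from $w$ to $a_H = \bdry H$, beyond the point where it reaches distance $d_H = d(w,H)$ from $w$, stays in a uniform neighbourhood of $H$. For horoballs of $X$ this follows from Lemma~\ref{horob}: once $\gamma(t)$ is within distance $O(1)$ of $O \in \cO$ its Busemann function value is $\lesssim -d_O + C$ and decreasing (the ray converges to $a_O$, so $\beta_{a_O}(\gamma(t),w) \to -\infty$ linearly), hence $\gamma(t) \in O$ for all larger $t$ up to an additive error absorbed into $R$ via the two-sided containment in the Lemma. More carefully: $\beta_{a_O}(\gamma(t),w) \approx_{O(1)} -t$ since $\gamma$ is a geodesic ray to $a_O$ through $w$, so $\beta_{a_O}(\gamma(t),w) \leq -d_O - C$ as soon as $t \gtrsim d_O$, placing the tail in $O \subseteq H$. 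For a coset $gP$ with $P \in \cP_2$: in $X(G,\cP_1\cup\cP_2)$ this is again the horoball statement for the glued-on horoball over $gP$, and the tail of the corresponding geodesic ray lies in that horoball, i.e. within bounded distance of $gP$; transferring back along the quasi-isometry, and using Lemma~\ref{tranqgeod}-type stability of quasi-geodesics (or just stability of quasi-geodesic rays in the hyperbolic space $X$) to compare $\gamma$ with the image of that ray, we get the tail of $\gamma$ in $N_R(gP)$ for uniform $R$.

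I expect the main obstacle to be (3), specifically making the comparison between a geodesic ray in $X$ to $a_H$ and the geometry of $H$ fully uniform when $H$ is a $\cP_2$-coset: one must control the "entry point" at distance $d_H$ correctly and ensure that the additive constants coming from (a) the quasi-isometry between the two cusped spaces, (b) stability of quasi-geodesics, and (c) the Busemann-function description of horoballs all combine into a single $R$ independent of $H$. The cleanest route is probably to work entirely inside the single hyperbolic space $X(G,\cP_1\cup\cP_2)$, prove the three properties there for its (genuine) peripheral subsets using only its hyperbolicity constant and Lemma~\ref{horob}, and then transport to $X$ via the quasi-isometry once and for all, so that all additive losses are taken at the very end.
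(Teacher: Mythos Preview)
Your central tool---the claimed quasi-isometry between $X = X(G,\cP_1)$ and $X' = X(G,\cP_1\cup\cP_2)$---does not exist, and essentially every step of your argument for the $\cP_2$-cosets relies on it. The collapse map you describe is not coarsely Lipschitz: by Lemma~\ref{lem-horoball-dist}, two points $x,y$ in a coset $gP$ with $P\in\cP_2$ at $d_P$-distance $D$ are at distance $\approx 2\log D$ in $X'$ (the attached horoball provides a shortcut), whereas their distance in $X$ is linear in $D$. (Take $\cP_1=\emptyset$ for the clearest case: then $X=\Gamma(G)$ and $P$ is an undistorted quasi-convex subgroup of the hyperbolic group $G$.) One sees the same obstruction at the level of boundaries: $\bdry X$ contains an embedded copy of $\bdry P$ for each such coset---a nontrivial set, since $P$ is infinite hyperbolic---while $\bdry X'$ collapses each of these to a single parabolic point. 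So there is no quasi-isometry along which to transport anything, and your arguments for (1), (2) (the coset case), and (3) (the coset case) all break.

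The paper's route is to work through the Cayley graph $\Gamma(G)$ rather than through $X'$. Claim~(1) is the Dru\c{t}u--Sapir bounded coarse intersection of peripheral cosets in $\Gamma(G)$ for the structure $\cP_1\cup\cP_2$. For~(2) with $H=gP$, $P\in\cP_2$, the paper uses that $gP$ is quasi-convex in $\Gamma(G)$ and that $\Gamma$-geodesics between points of $gP$ are \emph{transversal} with respect to $\cP_1$ (both from Dru\c{t}u--Sapir), and then invokes Lemma~\ref{tranqgeod} to promote those $\Gamma$-geodesics to uniform quasi-geodesics in $X$; quasi-convexity in $X$ follows. Claim~(3) is then a short thin-triangle argument entirely inside the hyperbolic space $X$, using only~(2): eventually the ray is close to $H$ by quasi-convexity, and a single thin triangle with a nearly-closest point of $H$ handles the point at distance $d_H$. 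Your Busemann-function argument for the horoball case of~(3) is fine, but once~(2) is available the paper's argument covers both cases uniformly without it.
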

\begin{proof}
In this proof we use results from \cite{DSp-05-asymp-cones}, which uses the equivalent
definition that $(G, \cP)$ is relatively hyperbolic if and only 
the Cayley graph of $G$ is ``asymptotically tree graded'' with respect to the collection
of left cosets of groups in $\cP$ \cite[Definition 5.9, Theorem 8.5]{DSp-05-asymp-cones}.

We first show $(1)$.
Let $gP, g'P'$ be the left cosets corresponding to $H, H'$.
As $G$ acts properly on $X$, given $r$ there exists $r'$ so that 
$N(H,r) \cap N(H',r)$ (using the metric $d$ on $X$)
is contained in the $r'$-neighbourhood in $X$ of
$N_G(gP,r') \cap N_G(g'P',r') \subset G \subset X$ (where $N_G$ indicates that we use the
metric $d_G$).
The diameter of $N_G(gP,r') \cap N_G(g'P',r')$ has a uniform bound in
$G$ by~\cite[Theorem 4.1($\alpha_1$)]{DSp-05-asymp-cones}; as $d \leq d_G$
it is also uniformly bounded in $X$.

Let us show $(2)$.
Uniform quasi-convexity of the horoballs is a consequence of Lemma \ref{hor-qconv}. 
If $H$ is a left coset of a peripheral subgroup in $\cP_2$, then it is quasi-convex in the Cayley graph 
$\Gamma$ of $G$ \cite[Lemma 4.15]{DSp-05-asymp-cones}. 
What is more, by \cite[Theorem 4.1($\alpha_1$)]{DSp-05-asymp-cones}, geodesics in $\Gamma$ connecting points 
of $H$ are transversal with respect to $\cP_1$. 
Therefore, by Lemma \ref{tranqgeod} they are quasi-geodesics in $X$. 
We conclude that $H$ is quasi-convex in $X$ since pairs of points of $H$ can be joined by quasi-geodesics 
(with uniformly bounded constants) which stay uniformly close to $H$.

We now show $(3)$.
Choose $p \in H$ so that $d(w,p)\leq d_H+1$.
As $a \in \bdry  H$ and $H$ is quasi-convex, there exists $C=C(X)$
so that a geodesic ray $\gam'$ from $p$ to $a$ lies in the $C$-neighbourhood of $H$.

Choose $x$ on a geodesic $\gam''$ from $w$ to $p$ so that $d(w,x) = d_H-C-\del_X-1$.
We must have $d(x,\gam') > \del_X$, otherwise $d(w,H) \leq d(w,x)+d(x,\gam')+C < d_H$.
As the geodesic triangle with sides $\gam, \gam', \gam''$ is $\del_X$ thin, 
$x$ is within $d_X$ of a point $y \in \gam$.
Note that $d(y,H) \leq C+2\del_X+1$, so by $(2)$
the subray of $\gam$ starting at $y$ lies in a uniformly bounded 
neighbourhood of $H$.
As $d(w,y) \leq d_H -C -1 < d_H$, we are done.
\end{proof}

From this lemma, we can deduce separation properties for the boundaries of sets in $\cH$.
\begin{lemma}\label{lem-parabolic-points}
We make the assumptions of Lemma~\ref{lem-x-asymp-tree-graded}.
Then there exists $C=C(X)$ so that for each $H,H'\in\cH$ with $H\neq H'$ and $d_H\geq d_{H'}$ we have
$$\rho(\bdry H,\bdry H')\geq e^{-\epsilon d_H}/C. $$
\end{lemma}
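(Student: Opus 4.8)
The plan is to estimate the Gromov product $(a|a')$ for $a \in \bdry H$, $a' \in \bdry H'$ and feed the result into the visual metric inequality $\rho(a,a') \geq e^{-\eps(a|a')}/C_0$. Concretely, I want to show that $(a|a') \lesssim d_H$ up to an additive constant depending only on $X$. Since $(a|a')$ is the infimal distance from $w$ to a bi-infinite geodesic $(a,a')$, it suffices to produce, for \emph{every} such geodesic line, a point on it no farther than $d_H + C$ from $w$; equivalently, to bound from below the distance that such a line must travel before it can ``escape'' past the obstacle at distance roughly $d_H$.

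First I would fix a bi-infinite geodesic $\eta$ from $a$ to $a'$ and consider the ideal triangle with vertices $w$, $a$, $a'$, with the two sides being geodesic rays $\gamma$ from $w$ to $a$ and $\gamma'$ from $w$ to $a'$. By thinness of ideal triangles in the $\delta_X$-hyperbolic space $X$, $\eta$ lies in the $C_1(\delta_X)$-neighbourhood of $\gamma \cup \gamma'$, so some point $x \in \eta$ lies within $C_1$ of, say, $\gamma$ (the symmetric case is identical). Now I invoke Lemma~\ref{lem-x-asymp-tree-graded}(3): the subray of $\gamma$ starting at distance $d_H$ from $w$ stays in $N_R(H)$, and symmetrically the subray of $\gamma'$ starting at distance $d_{H'} \leq d_H$ from $w$ stays in $N_R(H')$. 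Suppose for contradiction that $x$ — and hence, by connectedness of $\eta$ and the fact that $\eta$ runs off to both $a \in \bdry H$ and $a' \in \bdry H'$, a whole subsegment of $\eta$ — lies at distance greater than $d_H + C_1 + R + b(2R) + 10\delta_X$ from $w$. Then following $\eta$ toward $a$ it eventually enters $N_R(H)$ (since $a \in \bdry H$ and $H$ is quasi-convex), and following $\eta$ toward $a'$ it eventually enters $N_R(H')$; but the portion of $\eta$ near $x$ is, by the thinness argument and Lemma~\ref{lem-x-asymp-tree-graded}(3), within $C_1 + R$ of \emph{one} of $H$, $H'$. A point of $N_{C_1 + R}(H) \cap N_{C_1+R}(H')$ would have diameter-bounded overlap by Lemma~\ref{lem-x-asymp-tree-graded}(1), so $\eta$ cannot be near both $H$ and $H'$ for long; tracking where $\eta$ transitions from being near $H$ to near $H'$, that transition point is within a bounded distance $b(C_1+R) + O(\delta_X)$ of $N_{C_1+R}(H) \cap N_{C_1+R}(H')$, hence within $d_{H'} + C_2 \leq d_H + C_2$ of $w$, for $C_2 = C_2(X)$. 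This contradicts the assumed lower bound on $d(w,x)$ once $C$ is chosen large enough.

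Therefore every bi-infinite geodesic from $a$ to $a'$ passes within $d_H + C$ of $w$, giving $(a|a') \leq d_H + C$, and then
\[
	\rho(a, a') \geq \frac{1}{C_0} e^{-\eps(a|a')} \geq \frac{1}{C_0} e^{-\eps C} e^{-\eps d_H},
\]
so taking $C' = C_0 e^{\eps C}$ and infimizing over $a \in \bdry H$, $a' \in \bdry H'$ yields $\rho(\bdry H, \bdry H') \geq e^{-\eps d_H}/C'$, as required (after renaming $C'$ to $C$).

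The main obstacle is the middle step: making precise the claim that a bi-infinite geodesic $\eta$ joining two distinct obstacles $H, H'$ must, after getting close to one of them, pass through the bounded ``collision region'' $N_r(H) \cap N_r(H')$ before reaching the other, and controlling the distance from $w$ to that region in terms of $d_{H'}$. This is where Lemma~\ref{lem-x-asymp-tree-graded}(1) (bounded coarse intersection) and (3) (geodesic rays hug the obstacle past distance $d_H$) must be combined carefully with standard thin-triangle estimates; the bookkeeping of additive constants is routine but needs to be done so that the final constant depends only on $X$ (through $\delta_X$, $\eps$, $C_0$ and the functions $b$, $R$ from the previous lemma). A cleaner alternative I would try first is to argue directly with the rays $\gamma, \gamma'$: by Lemma~\ref{lem-x-asymp-tree-graded}(3) their terminal subrays lie in $N_R(H)$, $N_R(H')$ respectively, so by (1) they can fellow-travel only for a bounded time, forcing $(a|a')$ — which is comparable to the length of the initial common fellow-travelling of $\gamma$ and $\gamma'$ — to be at most $d_H + O(1)$; this avoids reasoning about the arbitrary line $\eta$ altogether and is likely the shortest route.
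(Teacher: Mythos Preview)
Your ``cleaner alternative'' at the end is precisely the paper's proof, and it works cleanly: pick rays $\gamma,\gamma'$ from $w$ to $a,a'$; by hyperbolicity they fellow-travel within $C'(\delta_X)$ up to distance $(a|a')$ from $w$; if $(a|a') \geq d_H + b(R+C') + 1$ then the segment of $\gamma$ between radii $d_H$ and $d_H + b(R+C') + 1$ lies in $N_R(H)$ by Lemma~\ref{lem-x-asymp-tree-graded}(3), while the matching segment of $\gamma'$ lies in $N_R(H')$ (since $d_{H'}\leq d_H$), so that segment of $\gamma$ sits in $N_{R+C'}(H)\cap N_{R+C'}(H')$ with diameter exceeding $b(R+C')$, contradicting Lemma~\ref{lem-x-asymp-tree-graded}(1). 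You should lead with this.

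The main body of your proposal, routed through the bi-infinite geodesic $\eta$, has a genuine gap as written. You fix a single point $x\in\eta$ near $\gamma$ and suppose \emph{it} is far from $w$; but to bound $(a|a')=\inf_{y\in\eta} d(w,y)$ you must assume \emph{every} point of $\eta$ is far from $w$, and your contradiction does not close against that hypothesis. More seriously, the step ``the transition point is within $d_{H'}+C_2$ of $w$'' is unjustified: knowing $\diam(N_r(H)\cap N_r(H'))\leq b(r)$ tells you nothing about the distance from that intersection to the basepoint $w$. The bounded-diameter fact is the right ingredient, but it must be played against the \emph{length} of the fellow-travelling segment (as in the ray argument), not against distance to $w$. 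Working with $\eta$ can be made to succeed, but only by reducing it to the ray argument anyway, so there is no gain.
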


\begin{proof}
 Let $a\in \bdry H, a'\in \bdry H'$. 
 We have to show that $(a|a')\lesssim d_H$. 
 Let $\gamma$, $\gamma'$ be rays connecting $w$ to $a, a'$, respectively.
 For each $p\in\gamma$ such 
 that $d(w,p)\leq (a|a')$ there exists $p'\in\gamma'$ such that $d(p',w)=d(p,w)$ and $d(p,p')\leq C_1=C_1(\delta_X)$.
 With $b(r)$ and $R$ as found by Lemma~\ref{lem-x-asymp-tree-graded}, set $C_2=b(R+C_1)$. 
 
 Suppose that $(a|a')\geq d_H+C_2+1$. Consider $p\in\gamma$, $p' \in \gamma$ such 
 that $d(p,w) = d(p',w)=d_H$.  
 By Lemma~\ref{lem-x-asymp-tree-graded}(3), we have $p\in N(H,R)$ and $p'\in N(H',R)$, as $d_H\geq d_{H'}$.
 So, $p\in N(H,R+C_1)\cap N(H',R+C_1)$.  The same holds also when $p\in\gamma$ is such that 
 $d(p,w)=d_H+C_2+1$. Therefore we have $\diam(N(H,R+C_1)\cap N(H',R+C_1))> C_2$, contradicting Lemma~\ref{lem-x-asymp-tree-graded}(1). 
 Hence $(a|a')< d_H+C_2+1$, and we are done.
\end{proof}

Conversely, we show that separation properties of certain points in the boundary $\bdry X$ have
implications for the intersection of sets in $X$.
\begin{lemma}\label{farfrombpoints}
We make the assumptions of Lemma~\ref{lem-x-asymp-tree-graded}.
 
 Let $\gam$ be a geodesic line connecting $w$ to $a\in\bdry X$. Suppose that for some $H\in\cH$ and
$r\geq 1$ we have $\rho(a,\bdry H)\geq e^{-\epsilon d_H}/r$. Then $\gamma$ intersects $N(H,L)$ in a
set of diameter bounded by $K+2L$, for $K=K(r, X)$ and any $L \geq 0$.
\end{lemma}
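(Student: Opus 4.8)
Let me think through how to prove this. We have a geodesic ray $\gamma$ from $w$ to $a$, a horoball-or-peripheral-coset $H \in \cH$ with $d(a, \partial H) \geq e^{-\epsilon d_H}/r$, and we want to bound the diameter of $\gamma \cap N_L(H)$ by $K + 2L$.

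The plan is to argue by contradiction: suppose $\gamma$ spends a long time (diameter $> K + 2L$) inside $N_L(H)$. Since $H$ is uniformly quasi-convex (Lemma~\ref{lem-x-asymp-tree-graded}(2)), a subray of $\gamma$ staying close to $H$ for a long distance must "track" $H$ toward a point of $\partial H$; in particular $a$ itself should be forced to lie in (or very near) $\partial H$, in a quantitative sense. The cleanest route: pick two points $p, q \in \gamma \cap N_L(H)$ with $d(p,q)$ large, say $p$ closer to $w$ and $q$ further out along $\gamma$; choose $p', q' \in H$ within $L$ of $p, q$ respectively. Then the geodesic $[p', q']$ in $H$ stays uniformly close to $H$, and by quasi-convexity and thinness of quadrilaterals, $\gamma$ between $p$ and $q$ stays within some $C = C(X, L)$ of $[p',q']$, hence within $C + (\text{const})$ of $H$.

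Now I would push this all the way to the boundary. The key estimate: the Gromov product $(a \mid a_H)_w$ — where $a_H \in \partial H$ is an appropriate limit point of $H$, e.g. the limit of a geodesic ray in $H$ based near $q'$ heading "along" the direction $\gamma$ is going — should be bounded below by roughly $d(w, q) - O(1)$. Since $q \in N_L(H)$ and the part of $\gamma$ beyond $q$ continues to stay near $H$ (by the contradiction hypothesis we'd actually want $q$ far along), we get $d(w, q)$ can be taken as large as we like, so $(a \mid \partial H)_w$ is large, forcing $\rho(a, \partial H) \leq C_0 e^{-\epsilon (a \mid \partial H)_w}$ to be smaller than $e^{-\epsilon d_H}/r$ once $d(w,q) - d_H$ exceeds an explicit threshold depending on $r$, $C_0$, $\epsilon$ and the quasi-convexity constant. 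This contradiction bounds $d(w, q) \lesssim d_H + (\text{const}(r, X))$; combined with the fact that $\gamma$ enters $N_L(H)$ only after distance roughly $d_H - O(1)$ from $w$ (because $a \notin \partial H$, one can't have $\gamma$ near $H$ before reaching depth $\sim d_H$ — this uses Lemma~\ref{horob} / Busemann function control, or Lemma~\ref{lem-parabolic-points}-style reasoning), we get that $\gamma \cap N_L(H)$ is squeezed into an interval of length $K(r,X)$, plus the $2L$ slack coming from passing between $N_L(H)$ and $H$ at the two ends.

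I expect the main obstacle to be making the "forcing $a$ near $\partial H$" step quantitative and clean — i.e. producing the right boundary point $a_H \in \partial H$ and getting the lower bound on $(a \mid a_H)_w$ of the form $\gtrsim_{C} d(w,q)$ uniformly. One has to be careful that $q'$ is deep enough in $H$ (distance $\gtrsim d_H$ from $w$) so that a geodesic ray in $H$ from near $q'$ genuinely limits to $\partial H$ while staying near $\gamma$; the estimate $d(w, q) \approx d(w, q')$ and hyperbolicity of $X$ should give $(a \mid a_H)_w \gtrsim \min\{d(w,q), (\text{depth control along } \gamma)\}$, and the contradiction hypothesis ensures the depth control is exactly $d(w,q)$. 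The lower bound $d(w, q) \gtrsim d_H$ at the near end is the easier half and follows because $N_L(H)$ doesn't come within bounded distance of $w$ until depth $\sim d_H$, which is essentially Lemma~\ref{horob} in the horoball case and quasi-convexity plus $a_H \neq a$ in the coset case. Everything else is routine thin-triangle bookkeeping.
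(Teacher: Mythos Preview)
Your overall strategy --- bound the far end of $\gamma \cap N_L(H)$ by producing a point $a_H \in \bdry H$ with large Gromov product $(a \mid a_H)_w$, then combine with the trivial near-end bound $d(w,p) \geq d_H - L$ --- is sound for the coset case but contains a genuine error in the horoball case.

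The issue is your claimed estimate $(a \mid a_H)_w \gtrsim d(w,q)$. For $H$ a left coset this is essentially correct: a point $q' \in H$ with $d(q,q') \leq L$ lies (by quasi-convexity) within bounded distance of a ray $[w,c)$ for some $c \in \bdry H$, and since $q \in [w,a)$ is $L$-close to $q'$, the rays $[w,a)$ and $[w,c)$ fellow-travel to depth $\approx d(w,q)-L$, giving $(a \mid c) \gtrsim d(w,q) - L$. But for $H$ a horoball $O$, the only boundary point is $a_O$, and the estimate fails. A tree (or Busemann) computation shows that if $q \in \gamma$ lies past the branch point of $[w,a)$ and $[w,a_O)$ and satisfies $\beta_{a_O}(q,w) \leq -d_O + L$, then
\[
d(w,q) \;\leq\; 2(a \mid a_O) - d_O + L,
\]
which is \emph{not} $(a \mid a_O) \gtrsim d(w,q)$: the factor of $2$ is essential, since $\gamma$ can travel a distance $(a \mid a_O) - d_O$ into the horoball and then the same distance back out. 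Your proposed bound would give a stronger conclusion (diameter $\lesssim K + L$) than the lemma claims, which is false. The corrected inequality still yields the lemma (combine with $(a \mid a_O) \leq d_H + \tfrac{1}{\eps}\log r + C$ to get $d(w,q) \leq d_H + \tfrac{2}{\eps}\log r + L + C'$, hence diameter $\leq K(r,X) + 2L$), but it requires the Busemann computation you only invoked for the near end, where it is in fact unnecessary.

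The paper's proof is organised differently: rather than arguing by contradiction and constructing a specific $a_H$, it works directly from the uniform bound $(a \mid c) \leq d_H + \tfrac{1}{\eps}\log r + C$ for \emph{all} $c \in \bdry H$, and shows that any $x \in \gamma$ with $d(w,x)$ exceeding an explicit threshold lies outside $N_L(H)$. The two cases are treated separately: for cosets, one uses that every point of $H$ is uniformly close to some ray $[w,c)$ with $c \in \bdry H$; for horoballs, one uses Lemma~\ref{horob} and a tree-approximation Busemann computation to control $\beta_{a_O}(x,w)$ directly. This avoids having to manufacture the ``right'' limit point $a_H$ and makes the $2L$ in the horoball case transparent.
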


\begin{proof}[Proof of Lemma~\ref{farfrombpoints}]
We will treat the horoball case and the left coset case separately, beginning with the latter.
\par
 We can assume that $H$ is not bounded, for otherwise the lemma is trivially true. 
 Due to quasi-convexity, each point on $H$ is at uniformly bounded distance from a geodesic
 line connecting points $a_1, a_2$ in $\bdry H$, and thus also at uniformly bounded distance, say $C_1=C_1(X)$,
 from either a ray connecting $w$ to $a_1$ or a ray connecting $w$ to $a_2$.
 
 Let $c\in \bdry H$ and let $\gamma'$ be a ray connecting $w$ to $c$. 
 As $\rho(a,c) \geq e^{-\eps d_H}/r$, we have that 
 $(a|c)\leq d_H+r'+C_2$, for $r'=\log(r)/\epsilon$ and $C_2=C_2(\delta_X, \eps, C_0)$.
 
 There exists $C_3 = C_3(C_1, X)$ so that any point $x$ on $\gamma$ such that
 $d(x,w) \geq (a|c)+L+C_3$ has the property that $d(x,\gamma')\geq L+C_1$.
 This applies to all rays connecting $w$ to some $c\in\bdry H$, 
 and so $d(x, H) > L$.
 
 Also, if $x\in\gamma$ and $d(x,w)<d_H-L$ then clearly $d(x,H)>L$.
 Thus if $\gamma\cap N(H,L) \neq \emptyset$ we have $(a|c)+L+C_3\geq d_H-L$, and
$$\diam(\gamma\cap N(H,L))\leq (a|c)+L+C_3 - (d_H-L) \lesssim_{C_2+C_3} r'+2L.$$
\par
We are left to deal with the horoball case. Let $c$ and $\gamma'$ be as above. 
Once again, $(a|c)\leq d_H+r'+C_2$, for $r'=\log(r)/\epsilon$. 
\begin{figure}[h]
\begin{center}
\includegraphics[scale=0.5]{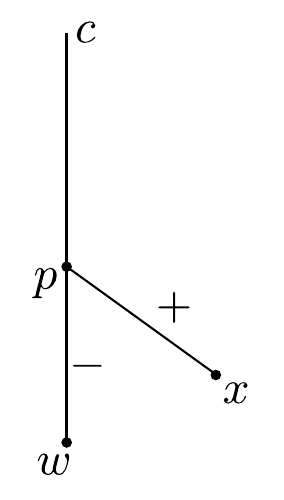}
\caption{How to compute the Busemann function in a tree.}
\end{center}
\end{figure}
By Lemma~\ref{horob}, if $\beta_c(x,w) > -d_H+C$ then $x \notin H$.
As $\beta_c$ is a $1$-Lipschitz function, if $\beta_c(x,w) > -d_H+C+L$, then
$x \notin N(H,L)$.

Given $x \in \gam$ with $d(x,w) \geq (a|c)$,
let $T'$ be the union of $\gam$ and the segment of $\gam'$ between $w$ and $x$.
Consider an approximating tree $T$ for $T'$
(see Figure 1), where $d(w,p) = (a|c)$ and the length of $d(w,x)$ is preserved.
By hyperbolicity, there is a $(1,C_3)$-quasi-isometric
map from $T'$ to $T$ where $C_3 = C_3(\del_X)$.
In $T$, 
\[ \beta_c(x,w)=d(x,p)-d(w,p) = d(x,w)-2d(p,w) = d(x,w) - 2(a|c). \]
This means that in $X$, there exists $C_4 = C_4(C_3)$ so that if $d(x,w) \geq (a|c)$,
\[
	\beta_c(x,w) > d(x,w)-2(a|c)-C_4.
\]
Thus, if $d(x,w) > 2 (a|c)-d_H+C+L + C_4$, we have $\beta_c(x,w) > -d_H+C+L$, so
$x \notin N(H,L)$.
Arguing as before, one sees that if $\gamma \cap N(H,L) \neq \emptyset$, for $C_5=C+2C_2+C_4$,
\[
	\diam(\gamma \cap N_L(H))\leq (2(a|c)-d_H+C+L+C_4) - (d_H-L) 
	\lesssim_{C_5} 2r'+2L.\qedhere
\]
\end{proof}

\subsection{Embedded planes}

In order to find a quasi-isometrically embedded copy of $\HH^2$
in a relatively hyperbolic group, we only need to embed a half-space of $\HH^2$ into our model space $X$,
provided that the embedding does not go too far into the horoballs.  (Compare with \cite{BK-05-quasiarc-planes}.)
As we see later, this means that we do not need to embed a quasi-circle into the boundary of $X$, but merely a quasi-arc.

\begin{definition}
	The \emph{standard half-space} in $\HH^2$ is the set $Q = \{ (x,y) : x^2+y^2 <1, x \geq 0 \}$
	in the Poincar\'e disk model for $\HH^2$.
\end{definition}

Let $G$, $\cP_1$, $\cP_2$, $\cH$ and $X=X(G, \cP_1)$ be as in Lemma~\ref{lem-x-asymp-tree-graded}.

\begin{proposition}\label{prop-avoid-horoballs}
 Let $f:Q\to X$ be a $(\mu,\mu)$-quasi-isometric embedding of the standard half-space $\HH^2$ into $X$,
 with $f((0,0))=w$. 
 Suppose there exists $\lambda > 0$ so that for each $a\in\bdry Q$, $H\in\cH$ we have
$$d(f(a),\bdry H)\geq e^{-\epsilon d_H}/\lambda.$$
Then there exists a transversal (with respect to $\cP = \cP_1 \cup \cP_2$)
	quasi-isometric embedding $g:\mathbb{H}^2\to \Gamma(G)$.
\end{proposition}

\begin{proof}
Each point $x \in Q \setminus \{(0,0)\}$ lies on a unique geodesic $\gamma_x$ connecting $(0,0)$ to a point
$a_x$ in $\bdry Q$. As $f(\gamma_x)$ is a $\mu$-quasi-geodesic and $X$
is hyperbolic, $f(\gamma_x)$ lies within distance $C_1=C_1(\mu, \delta_X)$ 
from a geodesic $\gamma_x'$ from $w$ to $f(a_x)$.
Let $x' \in \gam_x'$ satisfy $d(f(x),x') \leq C_1$.

Given two such points $x, y \in Q \setminus \{(0,0)\}$, let $x'',y''$ be the points
on $\gam_x', \gam_y'$ at distance $(x'|y')$ from $w$.
By hyperbolicity, $d(x'',y'') \leq C$, and and $x'', y''$ both lie within $C$ of 
any geodesic $\gam_{x'y'}$ from $x'$ to $y'$, for $C = C(\del_X)$.

If $f(x)$ and $f(y)$ lie in $N(H,L)$, for some $H \in \cH$, $L \geq 0$,
then $\gam_{x'y'}$ lies in $N(H,L')$, for some $L'=L'(L,\del_X,\cH,C_1)$,
by the quasiconvexity of $H$ (Lemma~\ref{lem-x-asymp-tree-graded}(2)).
Thus $x',x'',y',y''$ are in $N(H,L'+C)$.
By Lemma~\ref{farfrombpoints}, $d(x'',x')$ and $d(y'',y')$ are both at most
$C' = K+2(L'+C)$, for $K = K(\lambda,X)$.
Thus $d(f(x),f(y)) \approx_{2C_1+C} d(x'',x')+d(y',y'') \leq 2C'$, 
so we have the bound
\begin{equation}\label{eqhalfplane}
	\diam(N(H,L) \cap f(Q)) \leq 2C' + 2C_1+C.
\end{equation}
In particular, any point on $\gamma_x'$ is $C_2=C_2(\lambda, X)$ close to a point in $\Gamma(G)$, 
and therefore any point in $f(Q)$ is $C_1+C_2$ close to a point in $\Gamma(G)$.

Notice that $Q$ contains balls $\{B_n\}$ of $\mathbb{H}^2$ of arbitrarily large radius, 
each of which admits a $(\mu,\mu)$-quasi-isometric embedding $f_n:B_n\to X$
so that each point in $f_n(B_n)$ is a distance $C_1+C_2$ close to a point in $\Gamma(G)$.
In particular, translating those embeddings appropriately using the action of $G$ on $X$ we can and do assume that the
center of $B_n$ is mapped at uniformly bounded distance from $w$. As $X$ is proper, we can use Arzel\`a-Ascoli 
to obtain a $(\mu',\mu')$-quasi-isometric embedding $\hat{g} : \HH^2 \ra X$ as the limit of a subsequence of 
$\{f_n\}$ (more precisely $\{f_n|_N\}$, where $N$ is a maximal $1$-separated net in $\HH^2$),
for $\mu' = \mu'(\mu, C_1, C_2)$. 

 We now define $g:\HH^2 \ra \Gamma(G)$ so that for each $x\in \mathbb{H}^2$ 
	we have $d(\hat{g}(x),g(x))\leq C_3$, for $C_3 = C_3(C_1, C_2)$.
As $\cH$ is invariant under the action of $G$, and $d_G$ is a proper function of $d$, 
$g$ is transversal by \eqref{eqhalfplane}.
	
It remains to show that $g$ is a quasi-isometric embedding.
	Pick $x,y\in \mathbb{H}^2$. Notice that
$$d_G(g(x),g(y))\geq d(g(x),g(y))\gtrsim_{2C_3} d(\hat{g}(x),\hat{g}(y))  \gtrsim_{\mu'} d(x,y)/\mu',$$
so it suffices to show that for some $\mu''$,
$$d_G(g(x),g(y))\leq \mu'' d(\hat{g}(x),\hat{g}(y))+\mu''.$$
Let $\gam$ be
the geodesic in $\mathbb{H}^2$ connecting $x$ to $y$. 
Let $\gam'$ be the piecewise geodesic in $X$ from 
$g(x)$ to $\hat{g}(x)$ to $\hat{g}(y)$ to $g(y)$, 
which is at Hausdorff distance at most $C_4=C_4(\mu', C_3, \delta_X)$
from $g(\gamma)$. 

Each maximal subpath $\beta \subseteq \gamma'$ contained in some horoball $O \in \cO$ has length $l(\beta)$ at most $C_5=C_5(C_4, X)$ by transversality \eqref{eqhalfplane}.
If $z, z' \in G$ are the endpoints of $\beta$, then $d_G(z,z') \leq M d(z,z')$, for some $M = M(C_5, X) \geq 1$,
as $d_G$ is a proper function of $d$, and if $z \neq z'$ then $d(z,z')$ is uniformly bounded away from zero.
So we can substitute $\beta$ by a subpath in $\Gamma(G)$ of length at most $M l(\beta)$.

Let $\alpha$ be the path in $\Gamma(G)$ obtained from $\gamma'$ by substituting each such $\beta$ in this way.
Clearly we have $l(\alpha)\leq M l(\gamma')$, and so
\[ d_G({g}(x),{g}(y))\leq l(\alpha)\leq M (d(\hat{g}(x),\hat{g}(y))+2C_3). \qedhere \]
\end{proof}

\subsection{The Bowditch space is visual}\label{ssec-visual}

In order for the boundary of a Gromov hyperbolic space to control the geometry of the space itself,
we require the following standard property.
\begin{definition}\label{def-visual}
	A proper, geodesic, Gromov hyperbolic space $X$ is \emph{visual} if there exists $w \in X$ and $C>0$
	so that for every $x \in X$ there exists a geodesic ray $\gamma:[0,\infty) \ra X$, with $\gamma(0)=w$ and 
	$d(x, \gamma) \leq C$.
\end{definition}
A weaker version of this condition, suitable for spaces that are not proper, or not geodesic, is given in 
\cite[Section 5]{BS-00-gro-hyp-embed}.

We record the following observation for completeness.  
\begin{proposition}\label{prop-visual}
	If $(G, \cP)$ is a relatively hyperbolic group with every $P \in \cP$ a proper subgroup of $G$,
	then $X(G, \cP)$ is visual.
\end{proposition}
\begin{proof}
	Let $w \in X = X(G, \cP)$ be the point corresponding to $e \in G$.  Let $x \in X$ be arbitrary.
	
	First we assume that $\cP \neq \emptyset$.
	The point $x$ lies in $N(O,1)$, for some (possibly many) $O \in \cO$.
	Let $a_O = \bdry O$ be the parabolic point corresponding to $O$, and let $b \in \bdry X \setminus\{a_O\}$ be any other point.
	Such a point exists as the peripheral group corresponding to $O$ has infinite index in $G$.
	
	As $X$ is proper and geodesic, there is a bi-infinite geodesic line $\gamma:(-\infty, \infty) \ra X$ 
	with endpoints $a_O$ and $b$.
	The parabolic group corresponding to $O$ acts on $X$, stabilising $a_O \in \bdry X$, 
	so that some translate $\gamma'$ of $\gamma$ is at distance at most $C$ from the point $x$.  (We can take $C=2$.)
	
	Denote the endpoints of $\gamma'$ by $a_O$ and $b'$, and let $\alpha$ be the geodesic ray from $w$ to $a_O$,
	and $\beta$ the geodesic ray from $w$ to $b'$.  As the geodesic triangle $\gamma', \alpha, \beta$  
	is $\delta_X$-thin, $x$ lies within a distance of $C+\delta_X$ of one of the geodesic rays $\alpha$ and $\beta$, and we are done.
	
	Secondly, if $\cP = \emptyset$, we have that $X(G, \emptyset)$ is a Cayley graph of $G$.
	Fix any $a \neq b$ in $\bdry X$.  As $X$ is proper and geodesic, there is a bi-infinite geodesic $\gamma$ from $a$ to $b$.
	As the action of $G$ on $X$ is cocompact, some translate of $\gamma$ passes within a uniformly bounded distance of $x$,
	and the proof proceeds as in the first case.
\end{proof}

\section{Boundaries of relatively hyperbolic groups}\label{sec-boundaries}

We now begin our study of the geometry of the boundary of a relatively hyperbolic group,
endowed with a visual metric $\rho$ as in Section~\ref{sec-rel-hyp-defs}.
In this section, we study the properties of being doubling and having partial self-similarity.

First, we summarize some known results about the topology of such boundaries.

\begin{theorem}[Bowditch]\label{thm-bowditch}
	Suppose $(G, \cP)$ is a one-ended relatively hyperbolic group which does not split
	over a subgroup of a conjugate of some $P \in \cP$, 
	and every group in $\cP$ is finitely presented, one or two ended, and contains no infinite torsion subgroup.
	Then $\bdry(G, \cP)$ is connected, locally connected and has no global cut points.
\end{theorem}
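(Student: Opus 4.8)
The plan is to deduce Theorem~\ref{thm-bowditch} from the work of Bowditch on boundaries of relatively hyperbolic groups, so the proof is essentially a citation assembled from several of his papers, organized so that each topological conclusion follows from the right hypothesis. First I would recall the setup: under the standing hypotheses, $(G,\cP)$ is relatively hyperbolic in the sense of a geometrically finite convergence action on the compactum $M=\bdry(G,\cP)$, and the peripheral subgroups are the maximal parabolic subgroups. The finite presentation, the one- or two-endedness, and the absence of infinite torsion subgroups of the peripheral groups are exactly the conditions needed to invoke Bowditch's connectedness and local connectedness results in this relative setting \cite{Bow-99-rel-hyp, Bow-01-peripheral}; these play the role that finite presentation and one-endedness play in the absolute (hyperbolic group) case.

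The key steps, in order, are as follows. (1) \emph{Connectedness}: one-endedness of $G$ together with the hypothesis that $G$ does not split over a parabolic (equivalently, by accessibility-type results, the absence of a suitable splitting) rules out $\bdry(G,\cP)$ having more than one component or being a Cantor set; here I would cite the relative version of Bestvina--Mess / Bowditch's characterization of connectedness of boundaries, noting that a disconnected boundary would produce a splitting over a finite or parabolic subgroup. (2) \emph{No global cut points}: this is the relative analogue of the Bowditch--Swarup theorem \cite{Swa-96-cut-point, Bow-99-conn-lim-set}; a global cut point in $M$ would, via Bowditch's JSJ-type machinery and the dynamics of the convergence action, yield a splitting of $G$ over a parabolic subgroup (or over a two-ended subgroup contained in a parabolic), which contradicts the no-splitting hypothesis — this is where the two-ended possibility for peripheral groups and the torsion condition are used to control the stabilizers of cut points. (3) \emph{Local connectedness}: once $M$ is connected and has no global cut points, local connectedness follows from Bowditch's result that a connected boundary of a relatively hyperbolic group with no global cut points is locally connected \cite{Bow-01-peripheral}, again modelled on the hyperbolic-group argument of Bestvina--Mess and Bowditch combined with the peripheral structure.

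I would then just remark that all three conclusions are already in the literature under precisely these hypotheses, so the ``proof'' is the observation that our standing assumptions on $\cP$ (finitely presented, one or two ended, no infinite torsion) and on $G$ (one-ended, no splitting over a parabolic) are the hypotheses of Bowditch's theorems, together with the fact that the cusped-space boundary $\bdry X(G,\cP)$ we use agrees with Bowditch's boundary (as noted after Definition~\ref{def-rel-hyp}).

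The main obstacle, such as it is, is not a genuine mathematical difficulty but a bookkeeping one: matching our cusped-space formulation of relative hyperbolicity and our no-splitting hypothesis to the exact statements in Bowditch's papers, and making sure the ``two-ended peripheral'' and ``no infinite torsion subgroup'' hypotheses are the ones actually needed to kill cut points and get local connectedness rather than the slightly different hypotheses appearing in some references. If one wanted a self-contained argument the hard part would be re-deriving the Bowditch--Swarup no-cut-point theorem in the relative setting, but since we are allowed to quote earlier and external results, I would not attempt that here.
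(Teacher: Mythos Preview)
Your proposal is correct and takes essentially the same approach as the paper: both are citation proofs assembled from Bowditch's work, and the paper's proof is indeed only a few lines of references.

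The one organizational difference worth noting is the logical dependency for local connectedness. The paper obtains connectedness and local connectedness \emph{together and first}, directly from the hypotheses on the peripheral subgroups, citing \cite[Prop.~10.1]{Bow-99-rel-hyp} and \cite[Thm.~0.1]{Bow-MathZ}; only afterwards does it rule out global cut points, by observing that any such point must be parabolic \cite[Thm.~0.2]{Bow-99-conn-lim-set} and then invoking the no-splitting hypothesis via \cite[Prop.~5.1, Thm.~1.2]{Bow-MathZ}. Your ordering instead makes local connectedness a \emph{consequence} of no global cut points. That route is the familiar one in the absolute hyperbolic case, but in the relative setting Bowditch's local-connectedness theorem is stated directly under the peripheral hypotheses (finite presentation, one- or two-endedness, no infinite torsion) rather than under a no-cut-point assumption, so the paper's ordering matches the cited statements more cleanly. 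Relatedly, those peripheral hypotheses are used for local connectedness rather than, as you suggest, for controlling cut-point stabilizers.
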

\begin{proof}
	Connectedness and local connectedness follow from \cite[Pro\-position 10.1]{Bow-99-rel-hyp} and \cite[Theorem 0.1]{Bow-MathZ}.
	
	Any global cut point must be a parabolic point \cite[Theorem 0.2]{Bow-99-conn-lim-set},
	but the splitting hypothesis ensures that these are not global cut points either
	\cite[Proposition 5.1, Theorem 1.2]{Bow-MathZ}.
\end{proof}

Recall that a point $p$ in a connected, locally connected, metrisable topological space
$Z$ is \emph{not a local cut point} if for every connected
neighbourhood $U$ of $p$, the set $U \setminus \{p\}$ is also connected.  
If, in addition, $Z$ is compact, then $Z$ is locally path connected, so
$p$ is not a local cut point if and only if every neighbourhood $U$ of $p$ contains an open $V$ with $p \in V \subset U$
and $V \setminus \{p\}$ path connected.

More generally, we have the following definition, used in the statement of Theorem~\ref{thm-main-hyp-rel-hyp}.
\begin{definition}\label{def-loc-disconn}
	A closed set $V \subsetneq Z$ in a connected, locally connected, metrisable topological space
	$Z$ \emph{does not locally disconnect} $Z$ if 
	for any open connected $U \subset Z$, the set $U \setminus V$ is also connected.
\end{definition}
For relatively hyperbolic groups, we note the following.
\begin{proposition}\label{prop-bdry-nice}
	Suppose $(G, \cP)$ is relatively hyperbolic with connected and locally connected boundary.
	Let $p$ be a parabolic point in $\bdry(G,\cP)$
	which is not a global cut-point. 
	Then $p$ is a local cut point if and only if the corresponding peripheral group has more than one end.
\end{proposition}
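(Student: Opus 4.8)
The plan is to identify the local topology of $\bdry X$ at the parabolic point $p$ with the end structure of the corresponding peripheral group $P$ (the stabilizer of $p$), exploiting the fact that $p$ is a \emph{bounded} parabolic point of the action of $G$ on $\bdry X$.

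First, recall from \cite{Bow-99-rel-hyp} that the action of $G$ on $\bdry X$ is geometrically finite, so every parabolic point is bounded: $P$ acts properly discontinuously and cocompactly on $Y := \bdry X \setminus \{p\}$. Since $p$ is not a global cut point, $Y$ is connected; being an open subset of the compact, locally connected space $\bdry X$, it is also locally connected and locally compact, hence locally path connected, and so it carries a compatible length metric. The Milnor--\v{S}varc lemma then gives that $Y$ is quasi-isometric to $P$; in particular $Y$ has the same (finite) number of ends as $P$.

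Second, I would check that $p$ is a local cut point of $\bdry X$ if and only if $Y$ has at least two ends. As $\bdry X$ is compact and locally connected, the connected open neighbourhoods $U$ of $p$ with $\overline{U}\neq\bdry X$ form a neighbourhood basis, and the sets $U\setminus\{p\}=\bdry X\setminus\bigl(\overline{\bdry X\setminus U}\cup\{p\}\bigr)$ are cofinal among the complements of compact subsets of $Y$; hence the number of ends of $Y$ is $\varprojlim_U|\pi_0(U\setminus\{p\})|$. For connected $U$ every component of $U\setminus\{p\}$ has $p$ in its closure, so these cardinalities do not decrease as $U$ shrinks and equal their common stable value for all small enough $U$; this value is at least $2$ precisely when $U\setminus\{p\}$ is disconnected for some (equivalently, every) small connected $U$, i.e.\ precisely when $p$ is a local cut point.

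Combining the two steps yields: $p$ is a local cut point $\iff Y$ has at least two ends $\iff P$ has more than one end. The point that needs care is the soft topology of $Y=\bdry X\setminus\{p\}$ --- that it is connected, locally connected, locally compact, and, via the cocompact $P$-action, quasi-isometric to $P$ --- so that Milnor--\v{S}varc and the quasi-isometry invariance of the number of ends may be legitimately applied (one must in particular ensure that, in the length metric, $p$ lies at infinity, which the cocompact action forces); the end-versus-local-component bookkeeping is then routine. Alternatively one can argue directly with the geometry of the horoball $O$ over $P$: using Lemmas~\ref{horob} and \ref{lem-horoball-dist2} a boundary point near $p$ is recorded by where a geodesic from $w$ to it exits $O$ --- a point of a deep horospherical copy of $\Gamma(P)$ --- and two nearby such points are joined by a short path in $\bdry X$ missing $p$ exactly when their exit points lie in a common unbounded component of a large complement in $\Gamma(P)$; but the argument above is cleaner.
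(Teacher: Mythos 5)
Your approach is genuinely different from the paper's, and it has a real gap at the crucial step. The paper proves the proposition directly: it fixes a path-connected compact $K$ with $PK = \bdry X\setminus\{p\}$ and compares connectivity of $P$ outside large balls with connectivity of $\bdry X$ near $p$ by elementary combinatorics on the cover $\{gK\}_{g\in P}$. You instead want to derive the comparison from a quasi-isometry $Y := \bdry X\setminus\{p\} \simeq_{q.i.} P$ via Milnor--\v{S}varc and then invoke quasi-isometry invariance of ends. Your reduction of ``$p$ is a local cut point'' to ``$Y$ has $\geq 2$ topological ends'' (the second half of your argument) is fine.

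The gap is in applying Milnor--\v{S}varc. That lemma requires $P$ to act \emph{by isometries} on a proper geodesic (or at least length) metric space. The visual metric $\rho$ restricted to $Y$ is not a length metric and is not $P$-invariant: $P$ acts on $(\bdry X,\rho)$ by uniformly quasi-M\"obius maps, and the quasi-isometry constants of $g\in P$ near the fixed point $p$ are unbounded, so the action is not even a uniform quasi-action on $(Y,\rho)$. You write ``it carries a compatible length metric,'' but what you actually need is a $P$-\emph{invariant} proper length metric, and you neither construct one nor cite a theorem producing one. For a properly discontinuous, cocompact action by homeomorphisms on a merely locally compact, locally connected, second countable space, the existence of such a metric is a genuine theorem, not a formality --- and once you unwind any proof of it (say, via a $P$-equivariant nerve of translates of a compact set), you find yourself doing essentially the paper's combinatorial argument with $\{gK\}$. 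Your own ``alternatively'' sketch at the end (reading off exit points from a deep horospherical copy of $\Gamma(P)$) is in fact the route that can be made rigorous without this hidden input, and is much closer to what the paper does.
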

\begin{proof}
 The lemma follows, similarly to the proof of \cite[Proposition 3.3]{Dah-par-groups}, from the fact that the 
 parabolic subgroup $P$ corresponding to $p$ acts properly discontinuously and cocompactly on $\bdry(G,\cP)\backslash\{p\}$, 
 which is connected and locally connected. Let us make this precise. 
 
 Choose an open set $K_0$ with compact closure in $\bdry(G, \cP)\backslash \{p\}$, so that $PK_0=\bdry(G,\cP)\backslash\{p\}$.
 Then define $K_1$ as the union of all $qK_0$ for $q\in P$ with $d_P(q,e)\leq 1$.
 As $\bdry(G,\cP)\backslash\{p\}$ is connected and locally path connected, and $\overline{K_1}$ is compact, one can easily
 find an open, path connected $K$ so that $K_1 \subset K \subset \overline{K} \subset \bdry(G,\cP)\backslash\{p\}$.
 
 Now suppose that $P$ has one end.  Let $U$ be a neighbourhood of $p$.
 As $P$ acts properly discontinuously on $\bdry(G,\cP)\backslash\{p\}$, there exists $R$ so that
 if $d_P(e, g) > R$, then $gK \subset U$.  Let $Q$ be the unbounded connected component of $P\setminus B(e,R)$.
 Then $QK$ is path connected as for $g,h \in P$, if $d_P(g,h) \leq 1$, $gK \cap hK \neq \emptyset$.
 Finally, observe that $V = QK \cup \{p\} \subset U$ is a neighbourhood of $p$ so that
 $V \setminus \{p\} = QK$ is connected.
 
 Conversely, suppose that $p$ is not a local cut-point.
 Let $D$ be so that if $qK\cap rK\neq\emptyset$ then $d_P(q,r)\leq D$.
 Suppose we are given $R>0$.
 We can find a connected neighbourhood $U$ of $p$ in $\bdry(G,\cP)$ so that 
 $U \setminus \{p\}$ is path connected and $gK\cap U=\emptyset$ for all $g \in B(e, R+D) \subset P$.
 Let $R' \geq R+D$ be chosen so that $gK \cap U \neq \emptyset$ for all $g \in P \setminus B(e, R')$.
 Given $g,h\in P\backslash B(e,R')$ we can find a path in $U\backslash \{p\}$ connecting $gK$ to $hK$. 
 So, there exists a sequence $g=g_0, g_1, \ldots, g_n=h$ in $P \setminus B(e, R+D)$ 
 so that $g_iK\cap g_{i+1}K\neq \emptyset$ for all $i=0, \ldots, n-1$.
 Thus as $d_P(g_i, g_{i+1}) \leq D$, we have that $g$ and $h$ can be connected in $P$ outside $B(e, R)$.
 As $R$ was arbitrary, $P$ is one-ended. 
\end{proof}

\begin{samepage}
\subsection{Doubling}
\begin{definition}
	A metric space $(X,d)$ is \emph{$N$-doubling} if every ball can be covered by at most 
	$N$ balls of half the radius.
\end{definition}
\end{samepage}

Every hyperbolic group has doubling boundary, but this is not the case for relatively hyperbolic groups.
\begin{proposition}\label{prop-bdry-doubling}
	The boundary of a relatively hyperbolic group $(G, \cP)$ is doubling if and only
	if every peripheral subgroup is virtually nilpotent.
\end{proposition}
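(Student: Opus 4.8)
The plan is to analyze the two directions separately, using a visual metric $\rho$ on $\bdry(G, \cP)$ and the distance estimates in the model space $X = X(G, \cP)$. For the \emph{doubling implies virtually nilpotent} direction, I would argue contrapositively: if some peripheral subgroup $P \in \cP$ is \emph{not} virtually nilpotent, I want to show $\bdry(G, \cP)$ fails to be doubling. The point is that the parabolic point $a_O$ associated to a horoball $O$ with stabilizer (a conjugate of) $P$ should have a neighbourhood basis whose metric geometry reflects the growth of $P$. Using Lemma~\ref{lem-horoball-dist2}, for two elements $x, y$ in a left coset $gP$ at large word-distance $d_P(x,y)$ we have $d(x,y) \approx_A 2\log d_P(x,y)$, and the horoball geometry means that the ``shadow'' on $\bdry X$ seen from deep in the horoball at depth $t$ has diameter comparable to $e^{-\epsilon t}$ and contains roughly $|B_{P}(e^{t/\text{const}})|$-many well-separated parabolic points (the boundaries of the sub-horoballs glued along $gP$). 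So a ball of radius $r$ around $a_O$ at the relevant scale contains on the order of (growth of $P$ at radius $\sim 1/r$) many disjoint balls of radius $r/2$, and if $P$ has superpolynomial growth — which, by Gromov's theorem, holds whenever $P$ is not virtually nilpotent — this count is unbounded in $1/r$, violating the doubling property. I would cite the work of Dahmani and Yaman alluded to in the introduction for the clean packing statement; the role of the relatively hyperbolic structure here is just that distinct sub-horoballs along $gP$ have boundaries that are $\rho$-separated at the right scale, which follows from Lemma~\ref{lem-parabolic-points} applied within the horoball.

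For the \emph{virtually nilpotent implies doubling} direction, I would cover $\bdry X$ by controlled pieces. Fix a point $a \in \bdry X$ and a radius $r$; a ball $B_\rho(a, r)$ corresponds, via the visual metric estimate $\rho(a,b) \asymp e^{-\epsilon (a|b)}$, to the set of boundary points $b$ whose geodesic $(a,b)$ stays far from $w$, i.e. to a ``shadow'' cast from a point $p$ on the geodesic $(w,a)$ at depth $\approx \frac{1}{\epsilon}\log(1/r)$. If $p$ lies in the thick part $\Gamma(G) \setminus \bigcup_{O} O$ (or within bounded distance of it), then cocompactness of the $G$-action on $X \setminus \bigcup_{O \in \cO} O$, together with the fact that the Cayley graph of $G$ itself — being the $0$-level of a relatively hyperbolic structure with \emph{hyperbolic} (indeed finite, after the graph-of-groups reduction is irrelevant here) — hmm, more carefully: the thick part has bounded geometry and the shadows from points in the thick part behave like shadows in a hyperbolic group's Cayley graph, hence are doubling by the standard argument (the number of disjoint half-size sub-shadows is bounded by the number of geodesics of a fixed length emanating from $p$, which is bounded by the valence of $X$). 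If instead $p$ lies deep inside a horoball $O$ with virtually nilpotent stabilizer $P$, then the shadow is governed by the geometry of $P$ at polynomial scale: the number of disjoint half-size sub-shadows is bounded by the number of cosets of sub-horoballs visible from $p$, which is $\lesssim |B_P(\text{const})|$ in the polynomial-growth sense, hence uniformly bounded independent of $r$. I would also need the routine intermediate case where $p$ is near the interface between thick part and a horoball, but there the two estimates patch together with a bounded loss.

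I expect the \textbf{main obstacle} to be making the two ``shadow counting'' estimates precise and uniform — in particular, showing that a $\rho$-ball of radius $r$ in $\bdry X$ is comparable (up to a multiplicative constant independent of $r$ and of the center) to the shadow of a metric ball of bounded radius around a single point $p \in X$ at the appropriate depth, and that the number of disjoint sub-shadows is exactly controlled by either the valence of $X$ (thick case) or the word-metric ball count $|B_P(R_0)|$ for a universal $R_0$ (horoball case). The hyperbolicity of $X$ (Lemma~\ref{lem-horoball-dist2}, Lemma~\ref{horob}, and the approximating-tree lemma \cite[Lemma 2.12]{Ghys-dlH-90-hyp-groups}) handles the comparison between Gromov products and depths up to additive error, which translates to multiplicative error on $\rho$-balls; the real work is the packing count, for which I would lean on the Dahmani--Yaman analysis of these boundaries rather than redo it from scratch, and note that Gromov's polynomial growth theorem is what converts ``virtually nilpotent'' into the quantitative bound $|B_P(n)| \leq C n^d$ that we actually use.
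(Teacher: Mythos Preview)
Your approach is genuinely different from the paper's, and while the strategy is sound in outline, the shadow-counting details as you state them are not quite right.

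The paper's proof is a short chain of citations and avoids shadow-counting entirely. It uses the result of Dahmani--Yaman that every peripheral subgroup is virtually nilpotent if and only if the cusped space $X=X(G,\cP)$ has \emph{bounded growth at all scales} (for every $0<r<R$ there is an $N$ so that any $R$-ball in $X$ is covered by $N$ balls of radius $r$). One direction then follows from Bonk--Schramm: bounded growth of $X$ implies $\bdry X$ is doubling. For the converse, if $\bdry X$ is doubling it quasi-symmetrically embeds in some $\Sph^{n-1}$ by Assouad, hence $X$ quasi-isometrically embeds in $\HH^n$ by Bonk--Schramm, and so inherits bounded growth. Gromov's polynomial growth theorem is used only implicitly, inside Dahmani--Yaman.

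Your route --- counting separated shadows directly on $\bdry X$ --- essentially reproves the Dahmani--Yaman and Bonk--Schramm steps by hand. This can be made to work, but two points in your sketch are off. First, the quantity that governs doubling near $a_O$ is not ``growth of $P$ at radius $\sim 1/r$'' (i.e.\ $\beta_P(R)$ with $R\asymp r^{-1/\epsilon}$), but rather a growth \emph{ratio} of the form $\beta_P(CR)/\beta_P(cR)$: points $a_x$ with $q_{a_x}=x$ on a $d_P$-annulus of radii $\asymp R$ satisfy $\rho(a_x,a_y)\gtrsim r/2$ precisely when $d_P(x,y)\gtrsim cR$, so the packing number is $\asymp (\beta_P(CR)-\beta_P(R))/\beta_P(cR)$. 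For superpolynomial $\beta_P$ one does have $\limsup_R \beta_P(2R)/\beta_P(R)=\infty$ (else $\beta_P$ would be polynomial), so the argument closes, but not with the count you wrote. The same correction applies in the ``virtually nilpotent $\Rightarrow$ doubling'' direction: the bound is not $|B_P(\text{const})|$ but a ratio $\beta_P(C_1 s)/\beta_P(C_2 s)$ with $s\to\infty$, which is bounded exactly under polynomial growth. Second, your references to ``sub-horoballs glued along $gP$'' and their ``parabolic points'' do not match the model: there is a single horoball on $gP$ with a single parabolic point $a_O$, and the separated boundary points you need are not parabolic points of other horoballs but the shadows $a_x$ indexed by $x\in gP$ as above. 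Fixing these two points, your argument goes through; but the paper's route via bounded growth of $X$ is considerably shorter.
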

Recall that all relatively hyperbolic groups we consider are finitely generated, with $\cP$ a finite collection
of finitely generated peripheral groups.
\begin{proof}
	By \cite[Theorem 0.1]{DY-05-bdd-rel-hyp}, every peripheral subgroup is virtually nilpotent
	if and only if $X=X(G, \cP)$ has bounded growth at all scales: for every $0 < r < R$ there exists some $N$
	so that every radius $R$ ball in $X$ can be covered by $N$ balls of radius $r$.
	
	If $X$ has bounded growth at some scale then $\bdry X$ is doubling \cite[Theorem 9.2]{BS-00-gro-hyp-embed}.
	
	On the other hand, if $\bdry X$ is doubling, then $\bdry X$ quasi\-sym\-met\-rical\-ly embeds into some
	$\Sph^{n-1}$ (see \cite{Ass-83-snowflake}, or \cite[Theorem 12.1]{Hei-01-lect-analysis}).
	Therefore, $X$ quasi-isometrically embeds into some $\HH^n$ \cite[Theorems 7.4, 8.2]{BS-00-gro-hyp-embed},
	which has bounded growth at all scales.
	We conclude that $X$ has bounded growth at all scales (for small scales, the bounded growth of $X$ follows from
	the finiteness of $\cP$, and the finite generation of $G$ and all peripheral groups).
\end{proof}

\subsection{Partial self-similarity}

The boundary of a hyperbolic group $G$ with a visual metric $\rho$ is self-similar:
there exists a constant $L$ so that for any ball $B(z, r) \subset \bdry G$, with $r \leq \diam(\bdry G)$,
there is a $L$-bi-Lipschitz map $f$ from the rescaled ball $(B(z,r), \frac1r \rho)$ to an open set
$U \subset \bdry G$, so that $B(f(z), \frac1L) \subset U$.  
(See \cite[Proposition 3.3]{BK-11-coxeter} or \cite[Proposition 6.2]{BuLe-selfsim} for proofs that omit
the claim that $B(f(z), \frac1L) \subset U$.  The full statement follows from the lemma below.)

There is not the same self-similarity for the boundary $\bdry (G, \cP)$ of a relatively hyperbolic group $(G, \cP)$,
because $G$ does not act cocompactly on $X(G, \cP)$.
However, as we see in the following lemma, 
the action of $G$ does show that balls in $\bdry (G,\cP)$ with centres suitably far from parabolic points
are, after rescaling, bi-Lipschitz to large
balls in $\bdry (G, \cP)$.  The proof of Lemma~\ref{lem-self-sim} follows \cite[Proposition 3.3]{BK-11-coxeter} closely.

Partial self-similarity is essential in the following two sections, as we use it to control the geometry of the boundary
away from parabolic points.  Near parabolic points we use the asymptotic geometry of the corresponding peripheral 
group to control the geometry of the boundary.

\begin{lemma}\label{lem-self-sim}
	Suppose $X$ is a $\delta_X$-hyperbolic, proper, geodesic metric space with base point $w \in X$.
	Let $\rho$ be a visual metric on the boundary $\bdry X$ with parameters $C_0, \eps$.
	Then for each $D > 0$ there exists $L_0=L_0(\delta_X, \eps, C_0, D) < \infty$ with the following property:
	
	Whenever we have $z \in \bdry X$ and an isometry $g \in \Isom(X)$ so that some $y \in [w,z)$ satisfies
	$d(g^{-1} w, y) \leq D$,
	then $g$ induces an $L_0$-bi-Lipschitz map $f$ 
	from the rescaled ball 
	$( B(z, r), \frac1{r}\rho )$, where $r=e^{-\eps(d(w,y)+\del_X+1)}/2C_0$,
	to an open set $U \subset \bdry G$, so that $B(f(z), \frac1{L_0}) \subset U$.
\end{lemma}
\begin{proof}
	We assume that $z$, $g$ and $r$ are fixed as above.
	We use the following equality:
	\begin{equation}\label{eq-bk-claim2} 
		-\frac1\eps \log(2r C_0)=d(w,y)+\delta_X+1.
	\end{equation}
	
	For every $z_1, z_2 \in B(z, r)$, and every $p \in (z_1, z_2)$,
	one has 
	\begin{equation}\label{eq-bk-claim}
		d(y, [w, p]) \leq 3\delta_X.
	\end{equation}
	This is easy to see: $\rho(z_1, z_2) \leq 2r$, so 
	$d(w, (z_1, z_2)) \geq -\frac1\eps \log(2r C_0)$.
	Let $y_1 \in [w, z_1)$ be so that $d(y_1, w) = d(y, w)$, and notice that
	$d(y_1, (z_1, z_2)) > \delta_X$ by \eqref{eq-bk-claim2}.
	For any $p \in (z_1, z_2)$, the thinness of the 
	geodesic triangle $w, z_1, p$ implies that
	$d(y_1, [w,p]) \leq \delta_X$.
	In particular, for $z_2 = p = z$, we have
	$d(y_1, [w, z)) \leq \delta_X$,
	so $d(y_1, y) \leq 2\delta_X$,
	and the general case follows.
	
	Let $g \in G$ be given so that $d(g^{-1}w, y) \leq D$.
	For any $z_1, z_2 \in B(z, r)$, by \eqref{eq-bk-claim2}, \eqref{eq-bk-claim}
	we have
	\begin{align*}
		d(w, (z_1, z_2)) & \approx_{6\del_X} d(w, y)+d(y, (z_1, z_2)) \\
			& \approx_{(7\del_X+D+1)}  \frac{-1}{\eps}\log(2r C_0) + d(g^{-1}w, (z_1, z_2)).
	\end{align*}
	As $d(g^{-1}w, (z_1, z_2)) = d(w, (gz_1, gz_2))$, this gives that
	\[
		L_0^{-1} \frac{\rho(z_1, z_2)}{r} \leq \rho(g z_1, g z_2) \leq L_0 \frac{\rho(z_1, z_2)}{r},
	\]
	for any choice of $L_0 \geq 2C_0^3 e^{\eps(7\delta_X+D+1)}$.
	
	Thus the action of $g$ on $B(z, r)$ defines a $L_0$-bi-Lipschitz map $f$ with image $U$,
	which is open because $g$ is acting by a homeomorphism.
	It remains to check that $B(f(z), \frac{1}{L_0}) \subset U$.
	
	Suppose that $z_3 \in B(f(z), \frac1{L_0})$.
	Then $d(w, (f(z),z_3)) > \frac{-1}{\eps} \log(C_0/L_0)$, but
	$d(w, (f(z),z_3)) = d(g^{-1}w, (z, g^{-1}z_3))$. So, for large enough $L_0$, we have
	\begin{align*}
		d(w, (z, g^{-1}z_3)) & \geq \frac{-1}{\eps} \log\left(\frac{C_0}{L_0}\right)
		+ d(w, y)-C_1(\delta_X, D) \\
			& > \frac{-1}{\eps}\log\left(\frac{C_0}{L_0}\right) -\frac1\eps \log(2r C_0)-C_2(C_1, \delta_X) \\
			& > \frac{-1}{\eps} \log\left(\frac{r}{C_0}\right),
	\end{align*}
	where the last equality follows from increasing $L_0$ by an amount depending only on $\eps, C_0, C_2$.
	We conclude that $\rho( z, g^{-1}z_3) < r$.
\end{proof}

In our applications, it is useful to reformulate Lemma~\ref{lem-self-sim} so the input of the property is a ball in $\bdry X$ 
rather than an isometry of $X$.
\begin{corollary}[Partial self-similarity]\label{cor-self-sim2}
	Let $X$, $\bdry X$, $\rho$, $C_0$ and $\eps$ be as in Lemma~\ref{lem-self-sim}.
	Suppose $G$ acts isometrically on $X$.
	Then for each $D>0$ there exists $L_0 = L_0(\del_X, \eps, C_0, D) < \infty$ with the following property:
	
	Let $z \in \bdry X$ and $r \leq \diam(\bdry X)$ be given, and set 
	\[ d_r = - \frac1\eps \log(2rC_0) - \delta_X-1. \]  Then	
	\begin{enumerate}
	\item
	If $d_r \geq 0$, set $x \in [w,z)$ so that $d(w,x) = d_r$.
	Then for any $y \in [w, x]$ so that $d(g^{-1}w, y) \leq D$, for some $g \in G$,
	there exists a $L_0$-bi-Lipschitz map $f$ (induced by the action of $g$ on $\bdry X$)
	from the rescaled ball 
	$( B(z, r'), \frac1{r'}\rho )$, where $r' = r e^{\eps d(x,y)}$,
	to an open set $U \subset \bdry G$, so that $B(f(z), \frac1{L_0}) \subset U$.
	
	\item	
	If $d_r < 0$, then the identity map on $\bdry X$ defines a $L_0$-bi-Lipschitz map from the rescaled ball
	$( B(z, r'), \frac1{r'}\rho )$, where $r' = r$,
	to an open set $U \subset \bdry G$, so that $B(f(z), \frac1{L_0}) \subset U$.
	\end{enumerate}
\end{corollary}
\begin{proof}
	Let $L_0'$ be the value of $L_0$ given by Lemma \ref{lem-self-sim}.
	Since $d(w,y) = d_r-d(x,y)$, and
	$e^{-\eps(d_r-d(x,y)+\del_X+1)}/2C_0 = r'$, part (1) follows from Lemma~\ref{lem-self-sim}.
	
	Note that if $d_r < 0$, then $r > 1/C_1 >0$ for some $C_1=C_1(\eps, C_0, \del_X) < \infty$, so 
	part (2) follows from setting $L_0 = \max\{L_0', C_1\}$.
\end{proof}

\section{Linear Connectedness}\label{sec-lin-conn}

Under the hypotheses of Theorem~\ref{thm-bowditch}, we saw that $\bdry(G, \cP)$
is connected and locally connected.  In this section we show that $\bdry(G, \cP)$
 satisfies a quantitatively controlled version of this property.

\begin{definition}
	We say a complete metric space $(X,d)$ is \emph{$L$-linearly connected} for some $L \geq 1$ if
	for all $x,\,y \in X$ there exists a compact, connected set $J \ni x,\,y$
	of diameter less than or equal to $L d(x,y)$.
\end{definition}
This is also called the $L$-bounded turning property in the literature.
Up to slightly increasing $L$, we can assume that $J$ is an arc, see \cite[Page 3975]{Mac-08-quasi-arc}.

\begin{proposition}\label{prop-bdry-lin-conn}
	If $(G, \cP)$ is relatively hyperbolic and $\bdry(G, \cP)$ is connected and locally connected with no global cut points,
	then $\bdry(G, \cP)$ is linearly connected.
\end{proposition}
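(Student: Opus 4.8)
The plan is to upgrade the qualitative statement "connected, locally connected, no global cut points" to the quantitative "$L$-linearly connected" by a standard compactness-plus-self-similarity argument, in the spirit of \cite{BK-05-quasiarc-planes} and \cite{Mac-08-quasi-arc}, but being careful because $\bdry(G,\cP)$ is not genuinely self-similar. First I would record the qualitative input: a result of Bowditch–Bestvina–Mess type (or more directly, the argument that a compact, connected, locally connected metric space with no global cut points is ``uniformly'' connected at macroscopic scale) gives that there is some $\lambda_0 \geq 1$ such that any two points $x,y \in \bdry(G,\cP)$ with $\rho(x,y) \geq \frac{1}{10}\diam(\bdry(G,\cP))$ can be joined by a continuum of diameter at most $\lambda_0 \rho(x,y)$. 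This ``large-scale'' linear connectedness is the content that uses the topology, and it follows from a contradiction/compactness argument: if no such $\lambda_0$ worked, one extracts (using that the space is compact and locally connected, hence admits a convergent sequence of continua by a Hausdorff-limit argument) a pair of points at definite distance that cannot be joined by \emph{any} continuum of bounded diameter, forcing the existence of a separating point, i.e. a global cut point, contradiction.

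Next I would pass from large scales to all scales using the partial self-similarity of Lemma~\ref{lem-self-sim}. Given $x, y \in \bdry(G,\cP)$ with $r := \rho(x,y)$ small, set $z = x$ and apply Lemma~\ref{lem-self-sim} with $D$ fixed (say $D$ equal to the diameter of a fundamental domain for the $G$-action on $X \setminus \bigcup \cO$, so that along any geodesic ray from $w$ there are points $y'$ on the $G$-orbit within distance $D$, at least before the ray enters a horoball) to obtain a ball $B(x, r')$ with $r \leq r' \leq \diam(\bdry X)$, a scale factor $r'/r$, and an $L_0$-bi-Lipschitz map $f \colon (B(x,r'), \frac{1}{r'}\rho) \to U \subset \bdry(G,\cP)$ with $B(f(x), \frac{1}{L_0}) \subset U$. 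One needs to check $y \in B(x, r')$, which holds since $r' \geq r = \rho(x,y)$. The rescaled points $f(x), f(y)$ satisfy $\rho(f(x), f(y)) \asymp_{L_0} r/r'$, and — crucially — the choice of $r'$ via the geodesic-to-orbit approximation keeps $r/r'$ bounded below by a definite constant whenever the geodesic $[w,x)$ has not yet plunged deep into a horoball; in the remaining case (when $x$ is ``close'' to a parabolic point at the relevant scale) one instead takes $y'$ to be the last orbit point before entering the horoball and uses Lemma~\ref{horob}/Lemma~\ref{lem-horoball-dist2} to see that $r/r'$ is still bounded below, since horoballs are themselves uniformly self-similar (a horoball over a nilpotent group has nice scaling). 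In either case $\rho(f(x),f(y)) \geq c$ for a uniform $c > 0$, i.e. $f(x)$ and $f(y)$ are at macroscopic distance.

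Now apply the large-scale linear connectedness from the first step to $f(x)$ and $f(y)$: there is a continuum $J' \subset \bdry(G,\cP)$ of diameter at most $\lambda_0 \rho(f(x), f(y)) \leq \lambda_0$ joining them. If $\diam(J')$ is small enough (shrink $c$ or enlarge $L_0$ if needed) we may assume $J' \subset B(f(x), \frac{1}{L_0}) \subset U$, so $J := f^{-1}(J')$ is a well-defined continuum in $B(x, r')$ containing $x, y$, with $\rho$-diameter at most $L_0 r' \diam(J') \leq L_0 r' \lambda_0 \rho(f(x),f(y)) \lesssim L_0^2 \lambda_0 \frac{r'}{r'} r \cdot (\text{const}) $; tracking constants, $\diam(J) \leq L_0^2 \lambda_0 / c \cdot r = L \rho(x,y)$ for $L = L(L_0, \lambda_0, c)$, which depends only on $\bdry X$ and the visual metric data. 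Combined with the large-scale case (handled directly), this gives $L$-linear connectedness of $\bdry(G,\cP)$ for all pairs of points.

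I expect the main obstacle to be the case analysis in the self-similarity step: one must handle separately the situation where the geodesic $[w,x)$ realizing the scale $r$ is far from every horoball (where the cocompact action on $X \setminus \bigcup \cO$ gives orbit points directly) versus where it has entered a horoball (where one must use the internal geometry of horoballs over nilpotent groups, via Lemmas~\ref{lem-horoball-dist}, \ref{lem-horoball-dist2} and \ref{horob}, to produce a usable rescaling). Ensuring that $r'/r$ is uniformly bounded in \emph{both} regimes — so that the macroscopic continuum $J'$ pulls back to a continuum of the right diameter — is the delicate point; everything else is bookkeeping of visual-metric constants.
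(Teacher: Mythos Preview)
There is a genuine gap in the horoball case. When the tripod centre $p_{x,y}$ lies deep inside a horoball $O$---equivalently, when both $x$ and $y$ are much closer to the parabolic point $a_O$ than they are to each other---there is no $G$-orbit point within bounded distance of $[w,x)$ at the scale corresponding to $\rho(x,y)$. Lemma~\ref{lem-self-sim} then only produces $r'$ with $r' \gg r$, so the rescaled images $f(x),f(y)$ sit at \emph{small} mutual distance (comparable to $r/r'$), not macroscopic distance, and invoking your step~(1) becomes circular. Your proposed fix, that ``horoballs are themselves uniformly self-similar (a horoball over a nilpotent group has nice scaling)'', does not rescue this: first, the proposition makes no nilpotence assumption on the peripherals; second and more seriously, the stabiliser $gPg^{-1}$ of $O$ acts on $X$ by isometries \emph{fixing} $a_O$, so it cannot rescale visual distances to $a_O$. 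The dilations you have in mind exist in $\HH^{n+1}$ but not in a general $X(G,\cP)$. Note also that your argument never actually uses the ``no global cut point'' hypothesis---your step~(1) is vacuous as stated, since for $\rho(x,y)\geq \tfrac{1}{10}\diam(\bdry X)$ the whole boundary is already a continuum of diameter $\leq 10\rho(x,y)$.

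The paper's proof avoids self-similarity altogether in this regime. It proves a chain lemma (Lemma~\ref{lem-bdry-chains}): any pair $a,b$ admits a chain $a=c_0,\dots,c_n=b$ with consecutive gaps $\leq \rho(a,b)/2$ and total diameter $\leq K_1\rho(a,b)$; iterating this halves the gaps and yields the continuum. When $p_{a,b}$ is deep in the horoball over $gP$, the chain is built by hand: one drops to the horosphere $gP$, takes a geodesic ray in $(gP,d_P)$ staying far from the basepoint (Lemma~\ref{ray}), and pushes its lattice points back to $\bdry X$ via translates of a fixed ray, with all estimates coming from Lemma~\ref{prodhorrel} and Lemma~\ref{lem-horoball-dist2}. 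The ``no global cut point'' hypothesis is used in a separate subcase, where $p_{a,b}$ is near the Cayley graph but the relevant scale already lies inside a horoball: there one rescales only up to the horosphere and then uses that $a_O$ is not a cut point to route the connecting arc around it.
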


If $\cP$ is empty then $G$ is hyperbolic, and this case is already known by work of 
Bonk and Kleiner \cite[Proposition 4]{BK-05-quasiarc-planes}.
Lemma \ref{lem-self-sim} gives an alternate proof of this result, which we include to warm up for the proof of
Proposition \ref{prop-bdry-lin-conn}.
Both proofs rely on the work of Bestvina and Mess, and Bowditch and Swarup cited in the introduction.
\begin{corollary}[Bonk-Kleiner]\label{cor-hyp-lin-conn}
	If the boundary of a hyperbolic group $G$ is connected, then it is linearly connected.
\end{corollary}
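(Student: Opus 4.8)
The plan is to deduce linear connectedness of $\bdry G$ from two ingredients: the (full) self-similarity provided by Lemma~\ref{lem-self-sim}, together with the local connectedness of $\bdry G$ coming from the work of Bestvina--Mess, Bowditch and Swarup cited in the introduction (a connected boundary forces $G$ to be one-ended, so these results apply). Since $G$ is hyperbolic I would take $X = \Gamma(G)$ to be a Cayley graph, which is proper, geodesic and $\delta_X$-hyperbolic and on which $G$ acts isometrically and cocompactly, so the orbit $Gw$ of the basepoint is $D$-dense for some $D$. Feeding this into Lemma~\ref{lem-self-sim}, when the point $y$ of that lemma exists we may take $y' = y$, so $d(y,y')=0$ and $r'=r$; this yields: there is $L_0 = L_0(\bdry G, \rho)$ so that for every $z \in \bdry G$ and every $r \le \diam(\bdry G)$ there is an $L_0$-bi-Lipschitz map $f$ from the rescaled ball $(B(z,r), \tfrac1r\rho)$ onto an open set $U \subseteq \bdry G$ with $B(f(z), \tfrac1{L_0}) \subseteq U$ (and when $y$ fails to exist, $r$ is bounded below and the same holds with $r'=r$).

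Second, I would record the elementary fact that $\bdry G$, being a compact, connected (by hypothesis), locally connected metric space, admits a \emph{connectedness modulus}: a non-decreasing function $\psi \colon (0,\infty) \to (0,\infty)$ with $\psi(t) \to 0$ as $t \to 0^+$, such that any two points at distance $\le t$ lie in a continuum (compact connected subset) of diameter $\le \psi(t)$. Indeed $\psi(t) \le \diam(\bdry G)$ always, since $\bdry G$ itself is a continuum; and if $\psi(t) \not\to 0$ there would be $\eta > 0$ and pairs $a_n, b_n$ with $d(a_n,b_n)\to 0$ contained in no continuum of diameter $<\eta$, while passing to a convergent subsequence $a_n, b_n \to p$ and using a connected neighbourhood of $p$ of diameter $<\eta$ (local connectedness, compactness) produces such a continuum --- a contradiction.

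The main step is then a rescaling argument. Fix $x, y \in \bdry G$ with $d(x,y)=r$ and write $\Delta = \diam(\bdry G)$. Choose once and for all $\eps_0 > 0$, depending only on $L_0$ and $\psi$, small enough that $L_0\eps_0 < \tfrac{1}{2L_0}$ and $\psi(L_0\eps_0) < \tfrac1{L_0}$. If $r > \eps_0\Delta$ then $J = \bdry G$ is a continuum through $x,y$ of diameter $\Delta < r/\eps_0$, and we are done. Otherwise set $\rho^* = r/\eps_0 \le \Delta$ and apply the self-similarity statement at $z=x$ and scale $\rho^*$, obtaining an $L_0$-bi-Lipschitz $f\colon (B(x,\rho^*), \tfrac1{\rho^*}\rho) \to U$ with $B(f(x), \tfrac1{L_0}) \subseteq U$. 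Since $\rho(f(x),f(y)) \le L_0 r/\rho^* = L_0\eps_0 < \tfrac1{L_0}$ we have $f(y) \in U$, and the connectedness modulus gives a continuum $J' \ni f(x), f(y)$ with $\diam J' \le \psi(L_0\eps_0) < \tfrac1{L_0}$, so $J' \subseteq B(f(x), \tfrac1{L_0}) \subseteq U$. Pulling back, $J = f^{-1}(J')$ is a continuum in $\bdry G$ containing $x$ and $y$, and bi-Lipschitz control gives $\diam J \le L_0\rho^* \diam J' \le \big(L_0\psi(L_0\eps_0)/\eps_0\big)\, r$. Hence $\bdry G$ is $L$-linearly connected with $L = \max\{1/\eps_0,\ L_0\psi(L_0\eps_0)/\eps_0\}$, and by the remark following the definition of linear connectedness $J$ may be taken to be an arc after slightly increasing $L$.

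The only non-elementary input is the local connectedness of $\bdry G$; given that and Lemma~\ref{lem-self-sim}, the argument is a soft rescaling. The one point needing care is the order of quantifiers: $\eps_0$ must be selected using only $L_0$ and $\psi$ --- hence only $\bdry G$ and the fixed visual metric $\rho$ --- before the pair $x,y$ is given, so that the resulting $L$ is uniform. Note that neither doubling nor the absence of cut points is used in this direction, which is why the corollary holds under the sole hypothesis that $\bdry G$ is connected.
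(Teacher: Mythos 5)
Your proof is correct and takes essentially the same approach as the paper: both apply Lemma~\ref{lem-self-sim} on the Cayley graph to rescale a given pair of points to a definite distance, and then invoke uniform local connectedness of the compact boundary to join the rescaled points by a small continuum, pulling back to get the required linear bound. The only cosmetic difference is that you package uniform local connectedness as a ``connectedness modulus'' $\psi$, while the paper uses an open cover by small connected sets together with its Lebesgue number $\alpha$; these are interchangeable, and the constants produced are of the same form.
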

\begin{proof}
	Let $X=\Gamma(G)$ by a Cayley graph of $G$ with visual metric $\rho$,
	and let $L_0$ be chosen by Corollary~\ref{cor-self-sim2} for $D=1$.
	The boundary of $G$ is locally connected 
	\cite{BM-91-dimbdry,Bow-98-bdry-access-hyp,Bow-99-conn-lim-set,Swa-96-cut-point},
	so for every $z \in \bdry G$, we can find an open connected set $V_z$ satisfying
	$z \in V_z \subset B(z, 1/{2L_0})$.
	The collection of all $V_z$ forms an open cover for the compact space $\bdry G$,
	and so this cover has a Lebesgue number $\alpha > 0$.
	
	Suppose we have $z, z' \in \bdry G$.
	Let $r = \frac{2L_0}\alpha \rho(z,z')$.
	If $r > \diam(\bdry X)$, we can join $z$ and $z'$ by a set of diameter
	$\diam(\bdry X) < \frac{2L_0}{\alpha}\rho(z,z')$.
	
	Otherwise, we apply Corollary~\ref{cor-self-sim2}, using either (1) with $y=x$ or (2), 
	to find an $L_0$-bi-Lipschitz
	map $f: (B(z,r),\frac{1}{r}\rho) \ra U$.
	Since $\rho(f(z), f(z')) \leq L_0 \rho(z,z')/r = \frac\alpha{2}$, we
	can find a connected set $J \subset B(f(z),\frac1{L_0}) \subset U$
	that joins $f(z)$ to $f(z')$.
	Therefore $f^{-1}(J) \subset B(z,r)$ joins $z$ to $z'$, and has diameter at most
	$2r = \frac{4L_0}{\alpha}\rho(z,z')$.
	So $\bdry G$ is $4L_0/\alpha$-linearly connected.
\end{proof}

The key step in the proof of Proposition \ref{prop-bdry-lin-conn} is the construction of chains of points in the boundary.
\begin{lemma}\label{lem-bdry-chains}
	Suppose $(G, \cP)$ is as in Proposition~\ref{prop-bdry-lin-conn}.
	Then there exists $K$ so that
	for each pair of points $a,b\in \bdry(G,\cP)$ there exists a chain of points $a=c_0,\dots, c_n=b$ such that
	\begin{enumerate}
		\item for each $i=0,\dots,n$ we have $\rho(c_i,c_{i+1})\leq \rho(a,b)/2$, and
		\item $\diam(\{c_0,\dots,c_{n}\})\leq K \rho(a,b)$.
	\end{enumerate}
\end{lemma}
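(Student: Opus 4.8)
The plan is to build the chain of points iteratively, using a ``coarse midpoint'' construction in the boundary that exploits the partial self-similarity of Lemma~\ref{lem-self-sim} together with the absence of global cut points. I first observe that, since $\bdry(G,\cP)$ is compact, connected, locally connected and has no global cut points, there is a constant $K_0 \geq 1$ with the following ``weak connectivity at definite scale'' property: for any two points $u,v$ with $\rho(u,v) \leq \diam(\bdry(G,\cP))$, there is a point $m$ (a coarse midpoint) with $\rho(u,m), \rho(v,m) \leq \frac{2}{3}\rho(u,v)$ and, crucially, $m$ lying in a connected subset containing $u$ and $v$ of diameter at most $K_0\rho(u,v)$. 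The existence of such a coarse midpoint at scale comparable to $\diam(\bdry(G,\cP))$ follows from compactness plus a standard covering argument (a Lebesgue-number argument applied to a finite cover by small connected open sets, as in Corollary~\ref{cor-hyp-lin-conn}); the point is that no-cut-point lets us route around any single obstruction so the connected set we pick up has controlled diameter.

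The main step is then to promote this definite-scale statement to all scales via Lemma~\ref{lem-self-sim}. Given $a,b \in \bdry(G,\cP)$ with $\rho(a,b) = s$, I set $z$ to be (say) $a$ and $r$ of order $s$, take the corresponding point $y$ on the geodesic $[w,z)$, and slide back along the geodesic to a point $y'$ within bounded distance $D$ (here $D$ can be taken to be, e.g., the diameter of a fundamental domain for the $G$-action on $\Gamma(G) \subset X$, so $D$ is a fixed constant) of the orbit $Gw$; this is always possible because geodesics in $X$ that start at $w$ and head to a boundary point either stay in $\Gamma(G)$ or, in the part lying in a horoball, the penetration is controlled — and in any case there are orbit points within bounded distance of $[w,y]$ near its $w$-end. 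Lemma~\ref{lem-self-sim} then furnishes an $L_0$-bi-Lipschitz map $f$ from a rescaled ball $(B(z,r'),\frac1{r'}\rho)$ onto an open set $U \supset B(f(z),\frac1{L_0})$, where $r' = re^{\eps d(y,y')} \geq r$. Because $r' \geq r \gtrsim s$, both $a$ and $b$ lie in $B(z,r')$ provided $r$ is chosen with a large enough multiplicative constant (and if $s$ is comparable to $\diam(\bdry(G,\cP))$ we are already in the definite-scale case and done directly). Pushing $a,b$ forward by $f$, their images are $L_0$-close to each other at unit scale inside $U$; I apply the definite-scale coarse-midpoint statement there to get a midpoint and a controlled connected set, pull it back by $f^{-1}$, and extract from it a coarse midpoint $c$ of $a,b$ with $\rho(a,c),\rho(b,c) \leq \frac34 \rho(a,b)$ and $\rho(a,c)+\rho(c,b)$ (indeed the whole pulled-back set) of diameter at most $K_1'\rho(a,b)$ for a uniform $K_1'$.

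With the coarse-midpoint operation in hand, I construct the chain by a dyadic bisection: start with $c_0=a$, $c_{2^N}=b$ (for appropriate depth $N$ determined by $\rho(a,b)$ and the target resolution $\rho(a,b)/2$), insert the coarse midpoint $c_{2^{N-1}}$, then recursively insert coarse midpoints of consecutive pairs, stopping when consecutive points are within $\rho(a,b)/2$; relabelling gives the chain $a=c_0,\dots,c_n=b$ with property~(1). For property~(2), the diameter bound, I sum geometrically: each midpoint insertion at level $k$ produces a connected witnessing set of diameter at most $K_1' \cdot (3/4)^k \rho(a,b)$ attached to a point already within $O((3/4)^k)\rho(a,b)$-controlled distance of the chain, so by the triangle inequality every $c_i$ is within $\sum_{k\geq 0} K_1'(3/4)^k\rho(a,b) = 4K_1'\rho(a,b)$ of $a$, giving $\diam(\{c_0,\dots,c_n\}) \leq K_1\rho(a,b)$ with $K_1 = 8K_1'$.

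The main obstacle I expect is a somewhat delicate point rather than a deep one: ensuring that in the scaled ball the hypotheses of Lemma~\ref{lem-self-sim} are genuinely met — specifically, that one can always slide to $y'$ within bounded distance of $Gw$ without the rescaling factor $e^{\eps d(y,y')}$ blowing up uncontrollably, since if $y$ lies deep in a horoball this factor could be large. The resolution is that we do not need $r'$ small — we only need $a,b \in B(z,r')$ and $r'$ comparable to $\rho(a,b)$ is enough; but one must check that $r'$ does not become so large that $B(z,r')$ swallows the whole boundary in a degenerate way, or rather, argue that in that case we are back in the definite-scale situation and can quote the base case directly. Threading this dichotomy cleanly, and verifying that the coarse-midpoint set pulled back through $f$ still has controlled diameter after the $L_0$ distortion, is where the care is needed; everything else is bookkeeping with geometric series.
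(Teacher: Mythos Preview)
There is a genuine gap, and it is precisely the obstacle you flag in your final paragraph---but it is deep, not merely delicate, and your proposed resolution does not work.

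Consider $a,b$ with $\rho(a,b)=s$ where the centre $p_{a,b}$ of the tripod $w,a,b$ lies deep inside a horoball $O$. Then the point $y\in[w,a)$ at depth $-\tfrac1\eps\log(2rC_0)-\delta_X-1$ (for $r$ a fixed multiple of $s$) is also deep in $O$, and to reach a $y'$ within bounded distance of $Gw$ you must slide back to near $\partial O$, giving $d(y,y')$ roughly $d(w,y)-d_O$. Hence $r' = re^{\eps d(y,y')}$ is comparable to $e^{-\eps d_O}$, which can be vastly larger than $s$. Now apply Lemma~\ref{lem-self-sim}: the images $f(a),f(b)$ lie in $U$ at mutual distance $\lesssim L_0 s/r'$, which is tiny. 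Your ``definite-scale'' coarse-midpoint statement gives you a connected set in $U$ of diameter bounded only by an absolute constant (you cannot do better without already knowing linear connectedness at small scales, which is what you are proving). Pulling this back through $f^{-1}$ yields a set of diameter $\lesssim L_0 r'$, not $\lesssim s$. So your diameter bound (2) fails by the uncontrolled ratio $r'/s$. Your suggestion that ``we are back in the definite-scale situation'' confuses $r'$ large with $\rho(a,b)$ large; these are unrelated.

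The paper's proof confronts exactly this case with substantial extra machinery. When $p_{a,b}$ is deep in a horoball $O$ corresponding to a coset $gP$, it constructs the chain by hand: it finds exit points $q_a,q_b$ of geodesics $(a_O,a),(a_O,b)$ on $\partial O$, connects them by a path in $gP$ (using Lemma~\ref{ray} when $b=a_O$), and translates along this path to produce the $c_i$, with all distance estimates controlled via Lemma~\ref{prodhorrel}. There is also an intermediate case where $p_{a,b}$ is near $\Gamma(G)$ but the deeper point $y$ is in a horoball; here the paper changes basepoint to $p_1\in gP$ near $p_{a,b}$, observes that $a,b,a_O$ are then at definite mutual distance in the new visual metric, and invokes the no-global-cut-point hypothesis at that definite scale to route an arc from $a$ to $b$ avoiding a ball around $a_O$. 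The self-similarity argument you propose is essentially the paper's Case~2a and covers only the situation where $y$ is already close to $\Gamma(G)$.
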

We defer the proof of this lemma.
\begin{proof}[Proof of Proposition \ref{prop-bdry-lin-conn}]
Given $a, b \in \bdry(G,\cP)$, apply Lemma~\ref{lem-bdry-chains} to get
a chain of points $J_1=\{c_0,\dots,c_n\}$.
For $j \geq 1$, we define $J_{j+1}$ iteratively by applying Lemma~\ref{lem-bdry-chains} to
each pair of consecutive points in $J_j$, and concatenating these chains of points together.
Notice that
\[
	\diam(J_{j+1})\leq \diam(J_j)+\frac{2 K}{2^j} \rho(a,b).
\]
This implies that the diameter of $J=\overline{\bigcup J_j}$ is linearly bounded in $\rho(a,b)$,
and $J$ is clearly compact and connected as desired.
\end{proof}

We require two further lemmas before commencing the proof of Lemma \ref{lem-bdry-chains}.
The first is an elementary lemma on the geometry of infinite groups.

\begin{lemma}\label{ray}
 Let $P$ be an infinite, finitely generated group with Cayley graph $(\Gamma(P), d_P)$.
 Then for each 
 $p,q\in P$ there exists a geodesic ray $\alpha$ starting from $p$ and such that $d_P(q,\alpha)\geq d_P(p,q)/3$.
\end{lemma}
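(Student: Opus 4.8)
\textit{Plan.} If $p=q$ there is nothing to prove, since $d(q,\alpha)\ge 0$ for any ray; so assume $n:=d(p,q)\ge 1$. Since $P$ is infinite and finitely generated, its Cayley graph is an infinite, connected, locally finite graph, so by König's lemma there is a geodesic ray issuing from every vertex, and more generally any infinite subtree of a locally finite tree contains an infinite ray. My overall goal will be to produce a geodesic ray $\alpha$ starting at $p$ that avoids the open ball $B(q,n/3)$; such an $\alpha$ automatically satisfies $d(q,\alpha)\ge n/3$.

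\textit{Reduction.} The geodesic segments starting at $p$ that avoid $B(q,n/3)$ form a subtree $\mathcal G$ of the (locally finite) tree of all geodesic segments from $p$, and the property ``avoids $B(q,n/3)$'' is inherited by initial subsegments. Hence if $\mathcal G$ contains segments of arbitrarily large length it is infinite, so it contains an infinite ray, which is the desired $\alpha$. Thus it suffices to produce, for every $L$, a geodesic segment from $p$ of length at least $L$ avoiding $B(q,n/3)$.

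\textit{A criterion.} I would use the elementary monotonicity: for any geodesic $[p,z]$ and any $w\in[p,z]$, since $d(p,w)=d(p,z)-d(w,z)$ and $d(q,w)\ge d(q,z)-d(w,z)$, one gets $d(q,w)-d(p,w)\ge d(q,z)-d(p,z)$. Consequently, if $z$ lies in the set $F:=\{z:\ d(p,z)-d(q,z)\le n/3\}$, then every $w\in[p,z]$ satisfies $d(q,w)\ge d(p,w)-n/3$; combining this with the trivial $d(q,w)\ge n-d(p,w)$ yields $d(q,w)\ge\max\{d(p,w)-n/3,\ n-d(p,w)\}\ge n/3$ (the minimum of this maximum, over nonnegative values of $d(p,w)$, being attained at $d(p,w)=2n/3$). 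So any geodesic from $p$ to a point of $F$ avoids $B(q,n/3)$, and the whole problem reduces to showing that $F$ is infinite.

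\textit{Showing $F$ is infinite.} Along any geodesic ray $\gamma$ from $q$, the quantity $m\mapsto d(p,\gamma(m))-m$ changes by $-2$, $-1$ or $0$ at each step (since $m$ increases by $1$ while $d(p,\gamma(\cdot))$ moves by at most $1$), starts at $n$, and is bounded below by $-n$; hence it decreases to an integer limit $L_\gamma\in[-n,n]$. If $L_\gamma\le n/3$ for some such $\gamma$, then $d(p,\gamma(m))\le m+n/3$ for all large $m$ while $d(p,\gamma(m))\ge m-n\to\infty$, so the points $\gamma(m)$ provide infinitely many members of $F$ and we are done. If some geodesic $[q,p]$ extends to a geodesic ray $\gamma$ through $p$ then $L_\gamma=-n$, which is fine; the remaining content is to find a suitable ray from $q$ in general — it suffices that some geodesic $[q,p]$ has an initial segment of length at least $n/3$ lying on a geodesic ray from $q$, because following $[q,p]$ geodesically for $k$ steps before diverging forces (by the triangle inequality at the divergence point) $L_\gamma\le n-2k$. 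I expect this last step — steering a geodesic ray out of $q$ roughly toward $p$ in the presence of ``dead ends'', which is exactly where the constant $1/3$ is forced — to be the main obstacle; the monotonicity computations, the criterion, and the König/diagonal argument are routine.
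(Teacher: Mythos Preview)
Your proposal is incomplete at exactly the point you flag, and the paper's argument avoids that difficulty entirely by a much shorter route.

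The paper's proof is essentially five lines. Since $P$ is an infinite group, there is a bi-infinite geodesic line through $p$ (translate arbitrarily long geodesic segments so their midpoints sit at $p$ and pass to a limit using local finiteness). Split it at $p$ into rays $\alpha_1,\alpha_2$. If neither works, choose $p_i\in\alpha_i$ with $d(q,p_i)<n/3$; then $d(p,p_i)\ge n-n/3=2n/3$, so $d(p_1,p_2)=d(p_1,p)+d(p,p_2)\ge 4n/3$ because $\alpha_1,\alpha_2$ are opposite halves of a geodesic line, while $d(p_1,p_2)\le d(p_1,q)+d(q,p_2)<2n/3$. Contradiction.

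Your reduction, via K\"onig's lemma and the monotonicity criterion, to the assertion that $F=\{z:d(p,z)-d(q,z)\le n/3\}$ is infinite is correct and pleasant. But your proposed verification --- that some geodesic ray from $q$ shares an initial segment of length at least $n/3$ with some geodesic $[q,p]$ --- is not established, and in Cayley graphs exhibiting dead-end behaviour it is far from obvious: there is no a~priori reason the points on geodesics $[q,p]$ at distance $\lceil n/3\rceil$ from $q$ must lie on any geodesic ray from $q$. I do not see how to close this without invoking the existence of a bi-infinite geodesic through a vertex, at which stage the whole apparatus of $F$, monotone limits $L_\gamma$, and K\"onig becomes redundant and one is back to the paper's argument. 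In short: your criterion is correct, but proving its hypothesis looks at least as hard as the lemma itself, whereas the paper's two-rays trick settles everything immediately.
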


\begin{proof}
 As $P$ is infinite, there exists a geodesic line $\gam$ through $p$, which can be subdivided into geodesic 
 rays $\alpha_1, \alpha_2$ starting from $p$. We claim that either $\alpha_1$ or $\alpha_2$ satisfies the requirement.
 In fact, if that was not the case we would have points $p_i\in\alpha_i\cap B(q,d_P(p,q)/3)$. 
 Notice that $d_P(p_i,p)\geq 2d_P(p,q)/3$. Now, 
$$d_P(p_1,p_2)\leq d_P(p_1,q)+d_P(q,p_2)\leq 2d_P(p,q)/3,$$
but this contradicts
\[ d_P(p_1,p_2)=d_P(p_1,p)+d_P(p,p_2)\geq 4d_P(p,q)/3. \qedhere \]
\end{proof}

The next lemma describes the geometry of geodesic rays passing through a horoball.
If $a, b \in \bdry X$, we use the notation $p_{a,b}$ for the centre of the quasi-tripod $w, a, b$, 
i.e.\ the point in $[w, a) \subset X$ such that $d(w,p_{a,b})=(a |b)$. 

\begin{lemma}\label{prodhorrel}
 Fix $O\in\cO$ and $a,b\in\bdry(G,\cP)\backslash \{a_O\}$. Let $\gam_a,\gam_b$ be geodesics 
 from $a_O$ to $a,b$ and let $q_a,q_b$ be the last points
 in $\gam_a\cap \overline{O},\gam_b\cap \overline{O}$, which we assume to be both non-empty. 
 Also, let $\gam$ be a geodesic from $w$ to $a_O$ and let $q$ be the first point in $\gam\cap \overline{O}$
 (so that $d_O \approx d(w,q)$).
 Then there exists $E = E(X)< \infty$ so that the following holds.
\begin{enumerate}
 \item If $(a|a_O)\geq d_O$ then
$$(a|a_O)\approx_E d(q_a,q)/2+d_O.$$
 \item If $(a|a_O),(b|a_O) \in [d_O, (a|b)]$ then
$$(a|b)\gtrsim_E 2(a|a_O)-d_O-d(q_a,q_b)/2 \approx_E d(q_a,q)+d_O-d(q_a,q_b)/2.$$
Moreover, if $d(q_a,q_b)\geq E$ then $\approx_E$ holds in the equation above.
 \item If $(a|a_O)< d_O$ then $d(q_a,q)\approx_E 0$.
 \item If $p_{a,b} \in O$ and $d(p_{a,b}, X\setminus O) \geq R \geq E$ then $d(p_{a, a_O}, X \setminus O)$ and
 $d(p_{b, a_O}, X \setminus O)$ are both at least $R-E$, and $d(q_a, q_b) \geq 2R-E$.
\end{enumerate}

\end{lemma}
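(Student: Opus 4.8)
The plan is to prove all four statements by passing to an approximating tree, exactly as in the proof of Lemma~\ref{farfrombpoints}. By \cite[Lemma 2.12]{Ghys-dlH-90-hyp-groups} the finite configuration consisting of $w,a_O,a,b,z$, the points $q,q_a,q_b$, and the relevant geodesics and rays is $C(\delta_X)$-close to a tree $\cT$; recalling from the remarks before Lemma~\ref{farfrombpoints} that $\beta_{a_O}(x,w)=d(x,p)-d(p,w)$ for $p$ the point on $[w,x]$ with $d(w,p)=(a_O|x)$, I would further arrange that $\beta_{a_O}(\cdot,w)$ is computed faithfully in $\cT$. By Lemma~\ref{horob}, after enlarging $E$, one may replace $\overline O$ by the sublevel set $\{x:\beta_{a_O}(x,w)\leq -d_O\}$, which in $\cT$ is the ``fattened ray'' obtained from the ray $[w,a_O)$ by attaching, at each point at distance $s\geq d_O$ from $w$, the ball of radius $s-d_O$ in the directions away from $w$; the ray $[w,a_O)$ enters this set precisely at $q$. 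It therefore suffices to check each assertion in $\cT$, where all the estimates become genuine equalities.

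\emph{Parts (1) and (3).} Let $m$ be the median of $w,a,a_O$, so $d(w,m)=(a|a_O)$ and $[a_O,a]=[a_O,m]\cup[m,a]$ with $[a_O,m]\subset[w,a_O)$; along $[m,a]$ the Busemann function equals $d(\cdot,m)-(a|a_O)$. If $(a|a_O)\geq d_O$ then $[a_O,m]$ lies in the horoball and one exits it after travelling a distance $(a|a_O)-d_O$ along $[m,a]$, so $q_a\in[m,a]$ with $d(m,q_a)=(a|a_O)-d_O$; since also $d(m,q)=(a|a_O)-d_O$ and $[q,q_a]$ passes through $m$, we get $d(q_a,q)=2((a|a_O)-d_O)$, which is (1). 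If $(a|a_O)<d_O$ then $m$ lies strictly inside the horoball and $[m,a]$ leaves it at once, so the exit point of $[a_O,a]$ is the exit point $q$ of the core ray, i.e. $q_a=q$, which is (3).

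\emph{Part (2).} The hypotheses $(a|a_O),(b|a_O)\in[d_O,(a|b)]$ force, by $\delta_X$-hyperbolicity (hence exactly in $\cT$), that $(a|a_O)=(b|a_O)=:s$ and that $\cT$ has a branch point $u$ on $[w,a_O)$ at distance $s$ from $w$ at which the segment towards $a,b$ departs, followed by a branch point $v$ at distance $t:=(a|b)\geq s$ where $[v,a)$ and $[v,b)$ separate; here $q\in[w,u]$ with $d(q,u)=s-d_O$. Along $[u,v]$ the Busemann function is $d(\cdot,u)-s$, so one leaves the horoball only after distance $s-d_O$. If $s-d_O\leq t-s$ this exit occurs on the common segment $[u,v]$, so $q_a=q_b$ and $(a|b)=t\geq 2s-d_O=2(a|a_O)-d_O-\tfrac12 d(q_a,q_b)$. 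If $s-d_O>t-s$ then all of $[u,v]$ lies in the horoball and $q_a,q_b$ lie on the distinct branches $[v,a),[v,b)$, each at distance $(2s-d_O)-t$ from $v$, so $d(q_a,q_b)=2((2s-d_O)-t)$ and $(a|b)=t=2(a|a_O)-d_O-\tfrac12 d(q_a,q_b)$. Thus $(a|b)\gtrsim_E 2(a|a_O)-d_O-\tfrac12 d(q_a,q_b)$ always, with equality exactly in the second case, which is the robust one when $d(q_a,q_b)\geq E$ (for $E$ comparable to the tree-approximation error). The remaining $\approx_E$ in (2), between $2(a|a_O)-d_O$ and $d(q_a,q)+d_O$, is just part (1).

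\emph{Part (4) and the main obstacle.} A point $z\in O$ from which issues a geodesic ray $\gamma_0$ with $\gamma_0\cap O=\{z\}$ must be within $C(\delta_X)$ of $\partial O$; in $\cT$ this means $z$ branches off $[w,a_O)$ at a point $\gamma(s_0)$, $s_0\geq d_O$, at height exactly $h:=s_0-d_O$, whence $d(z,q)=2h$. The only direction leaving the horoball at $z$ is the one continuing past $z$ away from $\gamma(s_0)$, so $\gamma_0$ has that form, while $\gamma_1=[z,a_O)$ runs $z\to\gamma(s_0)\to a_O$; their concatenation is therefore literally the bi-infinite geodesic $[a_O,a']$, with $a'$ the endpoint of $\gamma_0$. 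The role of $d(z,q)\geq E$ is to force $h\geq E/2$ to be large compared with $\delta_X$, so that this identification survives passing back from $\cT$ to $X$. The step I expect to be the crux is the case analysis in (2): one must see that the borderline between the two regimes is governed by the sign of $(a|b)-(2(a|a_O)-d_O)$ in the tree, that in the relevant regime this quantity is $-\tfrac12 d(q_a,q_b)$, and that the passage back to $X$ degrades the identity to the one-sided bound $\gtrsim_E$ precisely in the borderline range $d(q_a,q_b)<E$. A secondary point needing care throughout is that the genuine horoball $O$, the coarse Busemann sublevel set, and the tree model $\cT$ all pin down the entry/exit points $q,q_a,q_b$ up to a common additive constant; this rests on geodesics to or from $a_O$ crossing $\partial O$ transversally, the Busemann function decreasing at unit speed along them.
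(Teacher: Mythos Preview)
Your approach is essentially identical to the paper's: both reduce to the tree case via \cite[Lemma 2.12]{Ghys-dlH-90-hyp-groups} and verify each item by direct computation there, with your write-up considerably more detailed than the paper's terse sketch. One small slip in Part~(3): you write that $m$ lies strictly \emph{inside} the horoball, but since $d(w,m)=(a|a_O)<d_O=d(w,q)$ the median $m$ lies on $[w,q)$, strictly \emph{outside} the sublevel set; your conclusion $q_a=q$ is nonetheless correct, since $\gamma_a$ runs $a_O\to q\to m\to a$ and exits the horoball precisely at $q$, never re-entering along $[m,a)$.
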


\begin{proof}
As in Lemma \ref{farfrombpoints} we only need to make the computations in the case of trees, illustrated by Figure~\ref{fig-prodhorrel}, and an approximation argument gives in each case the desired inequalities. 

\begin{figure}[h]
 \includegraphics[scale=0.45]{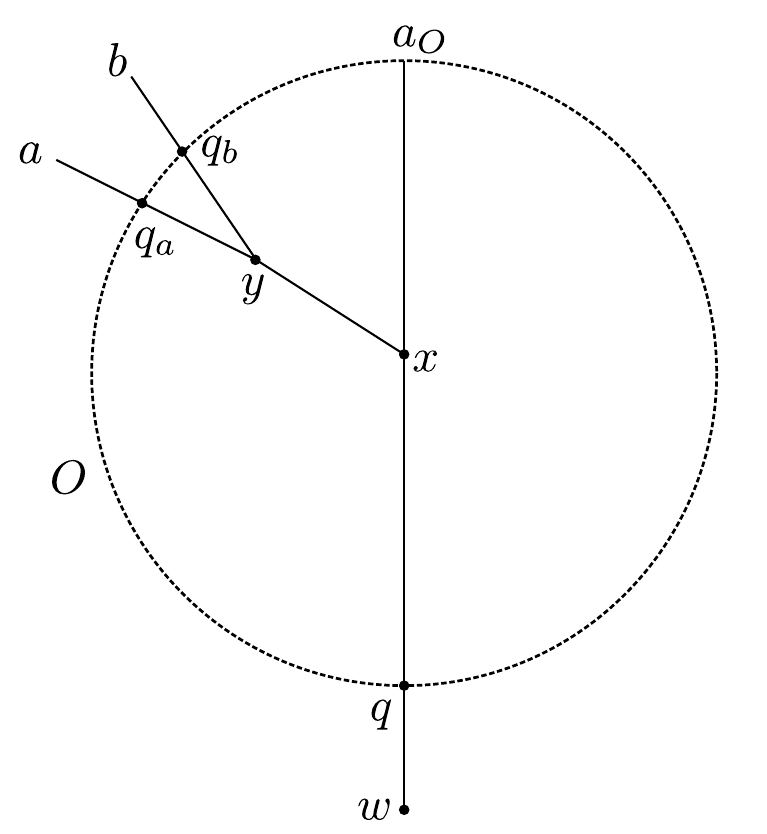}
 \caption{Geodesics passing through a horoball}
 \label{fig-prodhorrel}
\end{figure}

\par
$(1)$ Keeping into account that $x$ lies in $O$ as $d(w,x)=(a|a_O)\geq d_O$, the computation in a tree yields
$$(a|a_O)=d(w,q)+d(q,x)=d_O+d(q_a,q)/2,$$
as $d(x,q)=d(x,q_a).$
\par
$(2)$ The figure illustrates the first of the two possible types of tree approximating the configuration 
we are interested in. The second case to consider is when $q_a, q_b$ are between $x$ and $y$, and thus $q_a = q_b$ in the tree.
Therefore, for a suitable choice of $E$, the ``moreover'' assumption ensures we are in the first case.
In this first case we have the equality:
$$(a|b)=d(w,x)+d(q,x)-d(q_a,q_b)/2=(a|a_O)+((a|a_O)-d_O)-d(q_a,q_b)/2.$$

In the second case we can proceed similarly. We see that 
\begin{align*}
	(a|b) & \geq d(w, q_a) = d(w,x)+d(x,q_a) 
	\\ & = (a|a_0) + ((a|a_O)-d_O)
	= 2(a|a_O) - d_O,
\end{align*}
which is what we need as $d(q_a, q_b) =0$.
In both cases the final $\approx$ follows from part (1).
\par
$(3)$ In the tree approximating this configuration the ray from $w$ to $a$ does not enter the horoball $O$, so that the bi-infinite geodesic $\gamma_a$ exits $O$ from $q$.
\par
$(4)$ There are two types of tree approximating this configuration.  The first is given by Figure~\ref{fig-prodhorrel},
where $p_{a,a_O} = p_{b,a_O} = x$ and $p_{a,b} = y$, so 
\[
	d(p_{a,a_O}, X \setminus O) = d(p_{b, a_O}, X \setminus O) = d(x, q) = d(x, q_a) \geq d(y, q_a) \geq R.
\]
In this case $d(q_a, q_b) = 2 d(p_{a,b}, q_a) \geq 2R$.

The second configuration is when the geodesics $[w, a)$ and $[w, b)$ branch off from $[w, a_O)$ at different points.
Suppose that $(a|a_O) < (b|b_O)$, so $p_{a, a_O}$ lies on $[w, a_O)$ strictly between $q$ and $p_{b, b_O}$, and thus
$p_{a,b} = p_{a, a_O}$.  In this case $d(q_a, q_b) \geq d(q_a, p_{a,b}) + d(p_{a,b}, q_b) \geq 2R$, and we have
\[
	d(p_{b, a_O}, X \setminus O) = d(p_{b, a_O}, q) > d(p_{a,b}, q) \geq R. \qedhere
\]
 
\end{proof}

For each peripheral subgroup $P \in \cP$ we denote by $d_P$ the path metric on any left coset of $P$.

We are now ready to commence the proof of Lemma \ref{lem-bdry-chains}.
This proof is somewhat delicate, splitting into two cases, depending on the position of the 
the points $a, b \in \bdry(G, \cP)$.
In the first case, we use Lemma~\ref{ray} and the asymptotic geometry of a horoball to join $a$ and $b$ by a chain of points.
The second case is similar to Corollary~\ref{cor-hyp-lin-conn} for hyperbolic groups: the partial self-similarity of 
the boundary upgrades local connectedness to linear connectedness for $a$ and $b$.
A final argument in Case 2b uses the group action and the no global cut point condition to cover the remaining configurations.

\begin{proof}[Proof of Lemma \ref{lem-bdry-chains}.]
We need to find chains of points joining distinct points $a,b \in \bdry (G,\cP)$,
as described in the statement of the lemma.

Recall that $p_{a,b} \in [w, a)$ denotes the point on $[w, a) \subset X=X(G,\cP)$ 
such that $d(w,p_{a,b})=(a |b)$. 

Let $R=R(X)$ be a large constant to be determined by Case 1 below.
All constants may depend tacitly on $C_0, \eps, \del_X$.

\vspace{1mm}
{\noindent \textbf{Case 1:}}
We first assume that there exists $O\in\cO$ such that $p_{a,b}\in O$ and $d(p_{a,b},gP)>R$, 
where $gP$ is the left coset of the peripheral subgroup $P$ corresponding to $O$.

\vspace{1mm}
{\noindent \textbf{Case 1a:}}
Suppose that $\rho(a,b)\leq \rho(a,a_O)/S$ and $\rho(a,b) \leq \rho(b,a_O)/S$, 
for some large enough $S>1$ to be determined.
In this case, we push an appropriate geodesic path in $gP$ out to the boundary.

Let $S' = \log(S/C_0^2)/\eps$, and note that $(a|b)-(a|a_O) \geq S'$ and $(a|b)-(b|a_O) \geq S'$.  We assume that $S' \geq 0$.

Let $\gam_a$ be a geodesic from $a_O$ to $a$ and let 
$q_a$ be the last point in $\gam_a\cap \overline{O}$. Define $\gam_b$ and $q_b$ analogously. 
Let $\gam$ be a geodesic from $w$ to $a_O$ and let $q$ be the first point in $\gam \cap \overline{O}$.

Assuming $R \geq E$, by Lemma~\ref{prodhorrel}(4)
we have \[(a|a_O) = d(w, p_{a,a_O}) \geq d_O + d(p_{a,a_O}, X \setminus O) \geq d_O + R-E \geq d_O,\]
and likewise $(b|a_O) \geq d_O$.
Using Lemma~\ref{prodhorrel}(1) and the approximate equality case of Lemma~\ref{prodhorrel}(2), we have
\begin{equation}\label{eq-chain5}
\begin{split}
	d(q_a,q) & \approx_{2E} 2((a|a_O)-d_O) \\
		& \geq 2((a|a_O)-d_O)) +2(S'-(a|b)+(a|a_O)) \\
		& =2(2(a|a_O)-(a|b)-d_O+S')\approx_{2E} d(q_a,q_b)+2S'.
\end{split}
\end{equation}

We now define our chain of points joining $a$ to $b$.
Let $\alpha$ be a geodesic in $gP$ connecting $q_a$ to $q_b$, and denote by
$q_a=q_0,\dots,q_n = q_b$ the points of $\alpha \cap gP$.
For $i=0, \ldots, n-1$ let $c_i = q_i q_0^{-1} a \in\bdry (G,\cP)$ be the
endpoint of $q_i q_0^{-1}\gam_a$ other than $a_O$, and set $c_n = b$.
Notice that \[ 2\log(d_P(q,q_a)/d_P(q_a,q_b)) \approx_{2A} d(q,q_a)-d(q_a,q_b) \gtrsim_{4E} 2S' \] 
by Lemma~\ref{lem-horoball-dist2} and \eqref{eq-chain5}, so
for $S = S(E,A)$ large enough,
\[ 2 \log(d_P(q,q_a)/d_P(q_a,q_b)-1) \geq S',\] thus
\begin{align*}
	d(q,q_i)& \approx_A 2\log(d_P(q,q_i))\geq 2\log(d_P(q,q_a)-d_P(q_a,q_i)) \\
		& \geq 2\log(d_P(q,q_a)-d_P(q_a,q_b)) \\
		& \geq S' + 2 \log(d_P(q_a, q_b)) \approx_{A} S'+ d(q_a,q_b).
\end{align*}
In particular, if $S = S(E,A)$ is large enough we have $(c_i|a_O)\geq d_O$ for each $i$, by Lemma~\ref{prodhorrel}(3).
By Lemma~\ref{lem-horoball-dist2}, as $d_P(q_a, q_i) \leq d_P(q_a, q_b)$ we have 
$d(q_a, q_i) \lesssim_{2A} d(q_a, q_b)$.  Thus Lemma~\ref{prodhorrel}(2) gives 
\begin{align*}
	(a|c_i) & \gtrsim_{E} 2(a|a_O)-d_O-d(q_a,q_i)/2 \\ & \gtrsim_{A} 2(a|a_O)-d_O-d(q_a,q_b)/2 \ \ \approx_E (a|b),
\end{align*}
which gives the distance bound $\rho(a,c_i) \leq C_1 \rho(a,b)$, for $C_1=C_1(E,A)$.
This gives the diameter bound $\diam(\{c_i\}) \leq K_1 \rho(a,b)$ for $K_1 = 2C_1$.

We saw that $(a|c_i) \gtrsim_{2E+A} (a|b) \geq S' +(a|a_O)$, and so
$(c_i|a_O) \gtrsim \min\{(c_i|a), (a|a_O)\} \gtrsim (a|a_O)$, with error $C_2 = C_2(E, A)$.
We also have 
$(c_i|c_{i+1}) \gtrsim_1 d(q,q_i)+d_O \approx_E (c_i|a_O)+\frac12 d(q,q_i)
\gtrsim_{A} (c_i|a_O) + \frac{1}{2}S'$, so for $S=S(E,A)$ large enough, $(c_i|c_{i+1}) \geq (c_i|a_O)$ and likewise $(c_i|c_{i+1}) \geq (c_{i+1}|a_O)$.
Applying Lemma~\ref{prodhorrel}(2) twice and Lemma~\ref{prodhorrel}(4) we see that
\begin{align*}
	(c_i|c_{i+1}) & \gtrsim_{E} 2(c_i|a_O) - d_O - d(q_i, q_{i+1})/2
		\gtrsim_{2C_2+1} 2(a|a_O) - d_O \\ & \approx_E (a|b)+d(q_a, q_b)/2 \gtrsim_E (a|b)+R,
\end{align*}
and so for $R \geq R_1(C_2, E)$ we have $\rho(c_i,c_{i+1}) \leq \rho(a,b)/2$.

\vspace{1mm}
{\noindent \textbf{Case 1b:}}
Suppose that $b=a_O$.  In this case, a chain of points joining $a$ and $a_O$ is found by using an appropriate
geodesic ray in $gP$ and pushing it out to the boundary.  For a suitable choice of $R$, 
depending on the value of $S$ fixed by Case 1a, we will actually
ensure that the distance between subsequent points in the chain is at most $\rho(a, a_O)/2(S+1)$.

Let $\gam_a$, $q_a$, $\gam$ and $q$ be as above.
Notice that $q,q_a$ lie on $gP$,
so by Lemma~\ref{prodhorrel}(1)
\begin{equation}\label{eq-chain1}
	(a|a_O) \approx_{E} d(q_a,q)/2 + d_O \geq R+d_O.
\end{equation}

By Lemma~\ref{ray}, there exists a geodesic ray $\alpha$ in $gP$ starting at $q_a$ such that 
$d_P(q,\alpha)\geq d_P(q_a,q)/3$.
Therefore, by Lemma~\ref{lem-horoball-dist2} and \eqref{eq-chain1},
\begin{equation}\label{eq-chain2}
	d(q, \alpha) \approx_A 2 \log(d_P(q, \alpha))\geq 2\log(d_P(q_a,q)/3)\approx_{A+3} d(q_a,q) \geq 2R.
\end{equation}
Let $q_a=q_0,\dots,q_n,\dots$ be the points of $\alpha \cap gP$,
and, as before, for each $i \geq 0$ let $c_i = q_i q_0^{-1} \gam_a \in\bdry (G,\cP)$.

By Lemma~\ref{prodhorrel}(3) and \eqref{eq-chain2}, for $R \geq R_2(E,A) \geq R_1$, 
we can assume that $(c_i|a_O)\geq d_O$ for each $i$.

Using Lemma~\ref{prodhorrel}(1) and \eqref{eq-chain2}, there exists $C_3=C_3(A)$ so that
\begin{equation*}
	(c_i|a_O)\approx_{E} d(q_{i},q)/2+d_O \gtrsim_{C_3} d(q_a,q)/2 +d_O\approx_{E} (a|a_O).
\end{equation*}
And consequently there exists $C_4=C_4(C_3, E)$ so that for each $i$
\begin{equation}\label{eq-chain3}
	\rho(c_i,a_O)\leq C_4 \rho(a,a_O);
\end{equation}
this gives $\diam(\{c_i\}) \leq K_2 \rho(a, a_O)$ for $K_2 = 2C_4$.

Similarly to Case 1a, we have $(c_i|c_{i+1}) \gtrsim_{1+E} (c_i|a_O)+\frac12 d(q,q_i)$ and $d(q,q_i) \gtrsim_{2A+3} 2R$, so for $R \geq R_3(E,A) \geq R_2$ we have $(c_i|c_{i+1}) \geq \max\{(c_i|a_O),(c_{i+1}|a_O)\}$.
By Lemma~\ref{prodhorrel}(2), \eqref{eq-chain1} and \eqref{eq-chain2}, we have for $C_5=C_5(A)$
\begin{align*}
	(c_i|c_{i+1}) & \gtrsim_{E} d(q_i,q)+d_O-d(q_{i},q_{{i+1}})/2 
		 \gtrsim_{C_5} d(q_a,q)+d_O \\ & = \big(d(q_a,q)/2+d_O\big) + d(q_a,q)/2
		 \gtrsim_{E} (a|a_O)+R.
\end{align*}
So, taking $R \geq R_4(C_5, E, S) \geq R_3$, we have
\begin{equation}\label{eq-chain4}
	\rho(c_i,c_{i+1})\leq \rho(a,a_O)/2(S+1).
\end{equation}

For each $i$, by Lemmas~\ref{lem-horoball-dist2} and \ref{prodhorrel}(1), 
$(c_i|a_O)\approx_{E+A} \log(d_P(q_i,q))+d_O$,
so for $N$ large enough
we have  $\rho(c_N,a)\leq \rho(a,a_O)/2(S+1)$. 

Therefore the chain of points $a=c_0,\dots, c_N, a_O=b$ satisfies our requirements
by \eqref{eq-chain3}, \eqref{eq-chain4}.

\vspace{1mm}
{\noindent \textbf{Case 1c:}}
In this case, we have $\rho(a,b)\geq \rho(a,a_O)/S$ or $\rho(a,b)\geq \rho(b,a_O)/S$.
Without loss of generality, we assume that $\rho(a, a_O) \leq \rho(b, a_O)$ and $\rho(a, a_O) \leq S \rho(a,b)$.

Assume that $R \geq R_5 = R_4+E$.
Then by Lemma~\ref{prodhorrel}(4), $d(p_{a, a_O}, X \setminus O)$ and $d(p_{b, a_O}, X \setminus O)$ are 
both at least $R_4$, so by Case 1b there exist 
chains $a=c_0, c_1, \ldots, c_m=a_O$ and $a_O=c_0', c_1', \ldots c_n'=b$, with, for each $i$,
\begin{gather*}
	\rho(c_i, c_{i+1})  \leq \frac{\rho(a, a_O)}{2(S+1)} \leq \frac{S \rho(a,b)}{2(S+1)} < \frac{\rho(a,b)}{2}, \text{ and}\\
	\rho(c_i', c_{i+1}')  \leq \frac{\rho(b, a_O)}{2(S+1)} \leq \frac{\rho(b,a) + \rho(a,a_O)}{2(S+1)} \leq \frac{\rho(a,b)}{2}.
\end{gather*}
The diameter of $\{c_i\} \cup \{c_i'\}$ is at most $K_2 \rho(a, a_O) + K_2 \rho(b, a_O) \leq K_3 \rho(a,b)$ for
$K_3 = (2S+1)K_2$.

\vspace{1mm}
{\noindent \textbf{Case 2:}}
We assume that $d(p_{a,b}, \Gamma(G)) < R$.  In this case we can use the group action to find a connected set
joining $a$ and $b$ directly.

	Let $L_0 > 1$ be given by Corollary~\ref{cor-self-sim2} applied to $X$ with $D=R$.
	Since $\bdry(G, \cP)$ is locally connected and compact,
	there exists $\alpha > 0$ so that any $B(z,\alpha) \subset \bdry(G, \cP)$ is contained in an open, connected set of diameter
	less than $1/L_0$ (see the proof of Corollary~\ref{cor-hyp-lin-conn}).
	
	Let $r_1 = 2 \frac{L_0}\alpha \rho(a,b)$ and let $y_1 \in [w, a)$ be chosen
	so that $d(w,y_1) = -\frac1\eps \log(2r_1C_0)-\delta_X-1$.
	If no such $y_1$ exists, then we are done as $\rho(a,b) \geq C_6 = C_6(L_0/\alpha) > 0$, so
	we can join $a$ and $b$ by a connected set of diameter $\leq K_4 \rho(a,b)$, for
	$K_4 = \diam(\bdry X)/C_6$.
	
	Let $t \gg 0$ be a large constant to be determined by Case 2b.

\vspace{1mm}
{\noindent \textbf{Case 2a:}}	If there exists $y \in [w, y_1]$ so that $d(y_1, y) \leq 3t$ and
	$d(y, G w) \leq D$, then we argue as in the proof of Corollary~\ref{cor-hyp-lin-conn}.
	
	By Corollary~\ref{cor-self-sim2}(1), using $z=a, r=r_1, x=y_1$ and $y$ as given,
	there exists an $L_0$-bi-Lipschitz map $f: (B(a,r'), \frac{1}{r'}\rho) \ra U$,
	where $r' = r_1 e^{\eps d(y_1, y')}$, so that $B(f(a), 1/L_0) \subset U$.
	
	Now,
	\[
		\rho(f(a), f(b)) \leq L_0 \cdot \frac{1}{r'} \rho(a,b) \leq \frac{L_0}{r_1} \rho(a,b) = \frac{\alpha}{2},
	\]
	so we can join $f(a)$ and $f(b)$ by a connected set $J \subset B(f(a), 1/L_0)$.
	Therefore we can join $a$ and $b$ by $f^{-1}(J) \subset B(a, r')$.
	As $r' \leq r_1 e^{\eps 3t}$, $f^{-1}(J)$ has diameter at most $2r' \leq K_5 \rho(a,b)$, for $K_5 = 4L_0e^{\eps 3t}/\alpha$.
	
	\begin{figure}
	\includegraphics[scale=0.45]{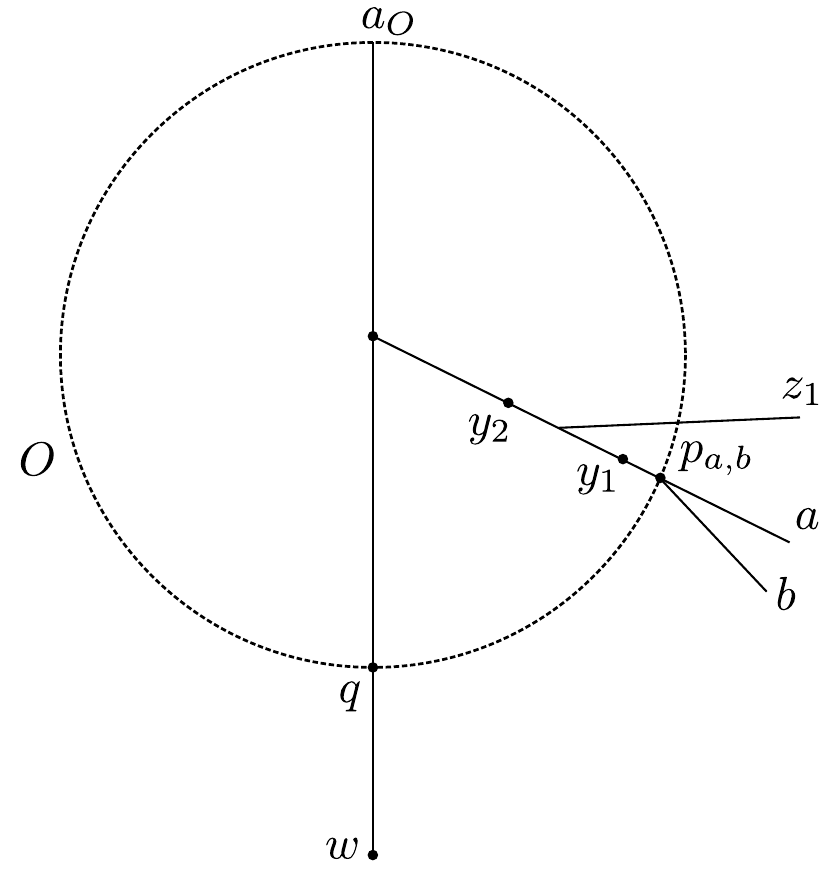}
	\caption{Lemma~\ref{lem-bdry-chains}, Case 2b}\label{fig-parabolicrescale}
	\end{figure}
\vspace{1mm}
{\noindent \textbf{Case 2b:}}
	If no such $y$ exists, we are in the situation of Figure~\ref{fig-parabolicrescale}.
	In this case, we use the absence of global cut points to find a connected set between $a$ and $b$.
	
	Let $y_2 \in [w, y_1)$ be chosen so that $d(y_1,y_2) = t$ and let $O \in \cO$ be the horoball containing $[y_2, y_1]$, which
	corresponds to the coset $gP$.
	Let $d_O = d(w, O)$, and let $p_1 \in gP$ be chosen so that $d(p_1, p_{a,b}) < R$.
	(In the figure, $p_1 = p_{a,b}$.)
	
	Let $\rho_1$ be a visual metric on $\bdry X = \bdry(G, \cP)$ based at $p_1$.
	We can assume that $(\bdry X, \rho)$ and $(\bdry X, \rho_1)$ are isometric, with the
	isometry induced by the action of $p_1$.
	In the metric $\rho_1$, we have that $a$, $b$ and $a_O$ are points separated by at least $\delta_0 = \delta_0(R)$.
	
	The boundary $(\bdry X, \rho)=(\bdry X, \rho_1)$ is compact, locally connected and connected.
	Consequently, given a point $c \in \bdry X $ that is not a global cut point, and $\delta_0 > 0$,
	there exists $\delta_1=\delta_1(\delta_0, c, \bdry X) > 0$ so that any two points in $\bdry X \setminus B(c,\delta_0)$
	can be joined by an arc in $\bdry X \setminus B(c, \delta_1)$.
	
	In our situation, 
	$\delta_1$ may be chosen to be independent of the choice of (finitely many) $c = a_O$ satisfying $d(O, p_1) = 0$,
	so $\delta_1 = \delta_1(\delta_0, \bdry X)$.
	Therefore, $a$ and $b$ can be joined by a compact arc $J$ in $(\bdry X, \rho_1)$ that does
	not enter $B_{\rho_1}(a_O, \delta_1)$.
	So geodesic rays from $p_1$ to points in $J$ are at least $2\delta_X$ from the geodesic ray
	$[p_{a,b}, a_O)$ outside the ball $B(p_1, t)$, for $t = -\frac1\eps \log(\delta_1)+C_7$, 
	where $C_7 = C_7(C_0, \eps, \del_X, R)$.
	
	Translating this back into a statement about $(\bdry X, \rho)$, we see that 
	geodesics from $w$ to points in $J$ must branch from $[w, p_{a,b}]$ after $y_2$, that is,
	the set $J$ lies in the ball $B(a, (K_6/2)\rho(a,b))$, for $K_6 = K_6(d(p_{a,b}, y_2)) = K_6(r_1, t)$.
	
	From these connected sets of controlled diameter, it is easy to extract chains of points
	satisfying the conditions of the lemma, with $K = \max\{K_1, \ldots, K_6\}$.
\end{proof}

\section{Avoidable sets in the boundary}\label{sec-avoiding}

In order to build a hyperbolic plane that avoids horoballs, we need to build an arc
in the boundary that avoids parabolic points.
In Theorem~\ref{thm-main-hyp-rel-hyp}, we also wish to avoid the specified hyperbolic subgroups.
We have topological conditions such as the no local cut points condition which help,
but in this section we find more quantitative control.

Given $p \in X$, and $0 < r< R$, the \emph{annulus} $A(p,r,R)$ is defined to be
$\overline{B}(p,R) \setminus B(p,r)$.  More generally, we have the following.
\begin{definition}\label{def-annular-nbhd}
	Given a set $V$ in a metric space $Z$, and constants $0 < r < R < \infty$,
	we define the \emph{annular neighbourhood}
	\[
		A(V,r,R) = \{ z \in Z : r \leq d(z, V) \leq R \}.
	\]
\end{definition}

If an arc passes through (or close to) a parabolic point in the boundary, we want to reroute it
around that point.  The following definition will be used frequently in the following two sections.
\begin{definition}[\cite{Mac-08-quasi-arc}]\label{def-iota-follows}
	For any $x$ and $y$ in an embedded arc $A$, let $A[x,y]$ be the closed, possibly trivial,
	subarc of $A$ that lies between them.
	
	An arc $B$ {\em $\iota$-follows} an arc $A$, for some $\iota \geq 0$, if there
	exists a (not necessarily continuous) map $p:B \rightarrow A$, sending endpoints to endpoints,
	such that for all $x,\,y \in B$, $B[x,y]$ is in the $\iota$-neighbourhood of
	$A[p(x),p(y)]$; in particular, $p$ displaces points at most $\iota$.
\end{definition}

We now define our notion of avoidable set, which is a quantitatively controlled version
of the no local cut point and not locally disconnecting conditions.
\begin{definition}\label{def-avoidable-set}
	Suppose $(X,d)$ is a complete, connected metric space.
	A set $V \subset X$ is \emph{$L$-avoidable on scales below $\delta$} for $L \geq 1$, $\delta \in (0, \infty]$
	if for any $r \in (0, \delta/2L)$,
	whenever there is an arc $I \subset X$ and points $x, y \in I \cap A(V,r,2r)$ so that
	$I[x,y] \subset N(V, 2r)$,
	there exists an arc $J \subset A(V,r/L,2rL)$ with endpoints $x, y$ so that
	$J$ $(4rL)$-follows $I[x,y]$.
\end{definition}

The goal of this section is the following proposition.
\begin{proposition}\label{prop-avoidable-bdry}
	Let $(G, \cP_1)$ and $(G, \cP_1 \cup \cP_2)$ be relatively hyperbolic groups, 
	where all groups in $\cP_2$ are proper infinite hyperbolic subgroups of $G$ ($\cP_2$ may be empty), 
	and all groups in $\cP_1$ are proper, finitely presented and one-ended.  
	Let $X = X(G, \cP_1)$, and let $\cH$ be the collection of
	all horoballs of $X$ and left cosets of the subgroups of $\cP_2$. (As usual we regard $G$ as a subspace of $X$.)
	
	Suppose that $\bdry X$ is connected and locally connected, with no global cut points.
	Suppose that $\bdry P$ does not locally disconnect $\bdry X$ for each $P\in\cP_2$.	
	Then there exists $L \geq 1$ so that for every $H \in \cH$, $\bdry H \subset \bdry X$ is
	$L$-avoidable on scales below $e^{-\eps d(w,H)}$.
\end{proposition}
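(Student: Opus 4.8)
The plan is to reduce, via the isometric action of $G$ on $X$ and the partial self-similarity of $\bdry X$, to finitely many ``model'' configurations, and in each of these to combine the topological hypotheses with a compactness argument.

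First I would separate two cases. Either $H$ is a horoball of $X$, so that $\bdry H = \{a_H\}$ is a single parabolic point; or $H$ is a left coset $gP$ of some $P \in \cP_2$, so that $\bdry H$ is the limit set of a conjugate of $P$. In both cases there is $g_0 \in G$ carrying $H$ to a horoball (resp.\ coset) that meets a fixed ball around $w$; since $\cP_1$ and $\cP_2$ are finite and $G$ acts properly, there are only finitely many such model sets $H_0$ arising, and a Gromov product computation (of the change-of-basepoint type underlying Lemma~\ref{lem-self-sim} and Lemma~\ref{prodhorrel}) shows that $g_0$ acts on $\bdry X$ distorting $\rho$ near $\bdry H$ by a bounded multiple of $e^{\eps d(w,H)}$. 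It therefore suffices to prove, with a constant uniform over the finitely many models $H_0$, that $\bdry H_0$ is $L$-avoidable on all scales below a fixed $\delta_0>0$; avoidability of $\bdry H$ on scales below $e^{-\eps d(w,H)}$ then follows by transporting reroutings back through $g_0$ and absorbing the bounded distortion into $L$.

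For a model coset $\bdry P_0$: geodesic rays $[w,z)$ with $z\in\bdry P_0$ stay uniformly close to the coset $P_0\subset\Gamma(G)\subset X$, hence to the orbit $Gw$, at every depth past $d(w,P_0)$, so Lemma~\ref{lem-self-sim} applies with $y'=y$ and $r'=r$ at every small scale $r$ near $\bdry P_0$. After rescaling by $1/r$, the ball $B(z,r)$ is thus uniformly bi-Lipschitz to a neighbourhood of some model coset, and it is enough to establish avoidability of the finitely many model cosets at a single fixed scale. Here the hypothesis enters: since $\bdry X$ is compact, connected and locally connected and $\bdry P_0$ does not locally disconnect it, a compactness argument in the spirit of \cite{Mac-08-quasi-arc} shows that at a fixed scale any arc meeting $N(\bdry P_0,2\eta)$ at points of $A(\bdry P_0,\eta,2\eta)$ can be rerouted, following the original arc, inside the annular neighbourhood required by Definition~\ref{def-avoidable-set}; running $\eta$ over the relevant bounded range and using finiteness of the models gives a uniform constant, the bi-Lipschitz rescalings distorting the ``following'' relation only by bounded factors.

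For a model parabolic point $a_0=\bdry H_0$, Lemma~\ref{lem-self-sim} degenerates, since a geodesic climbing the horoball $H_0$ leaves $Gw$ at the floor of $H_0$, so the rescaled radius $r'$ saturates at a constant and yields nothing below scale $\delta_0\approx e^{-\eps d(w,H_0)}$. Instead I would argue directly with the horoball geometry of Lemma~\ref{prodhorrel}: for small $s$ a point of $A_\rho(a_0,s,2s)$ corresponds, up to bounded ambiguity, to a geodesic ray from $w$ that climbs $H_0$ to height comparable to $s^{-1/(2\eps)}$, exits the floor $P_0$ at a vertex at $d_{P_0}$-distance comparable to $s^{-1/(2\eps)}$ from the entrance of $H_0$, and then continues outward; under this correspondence, staying in $A_\rho(a_0,s/L,2sL)$ amounts to keeping the exit vertex in a $d_{P_0}$-annulus of fixed ratio, so two points of $A_\rho(a_0,s,2s)$ joined inside $B(a_0,2s)$ by an arc can be rejoined within the required annular neighbourhood exactly when the corresponding metric annulus in $P_0$ is suitably connected between the two exit vertices --- which holds uniformly because $P_0\in\cP_1$ is one ended (and there are finitely many $P_0$). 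In this case the ``following'' requirement is automatic, as $\bdry H_0$ is a single point, so $I[x,y]$ and the new arc both lie in a ball of radius comparable to $sL$ and the map collapsing the interior of the new arc to an endpoint displaces points by at most $4sL$. Taking $L$ to be the maximum of the constants produced in the two model cases and transporting back through the $g_0$'s finishes the proof. The main obstacle is this parabolic case at small scales, where self-similarity is unavailable and one must instead read off uniform connectivity of the relevant annuli in $\bdry X$ from the horoball structure and one-endedness of the peripheral group; making the correspondence between scales near $a_0$ and metric spheres in $P_0$ precise, and checking that reroutings transport with the stated ``following'' and annular bounds, is the technical heart.
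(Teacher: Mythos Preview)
Your overall architecture matches the paper's: split into the horoball case and the coset case, use the $G$-action and Lemma~\ref{lem-self-sim} to reduce to finitely many models, then in the parabolic case read off annular connectivity near $a_O$ from the corresponding annulus in the peripheral group via the dictionary of Lemma~\ref{prodhorrel}, and in the coset case combine self-similarity with the ``does not locally disconnect'' hypothesis. The observation that the $4rL$-following condition is vacuous when $\bdry H$ is a point is also exactly what the paper uses.

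Two places deserve sharpening. In the coset case you write that self-similarity reduces to ``avoidability of the finitely many model cosets at a single fixed scale,'' and then a compactness argument finishes. This is not quite how the reduction works: Lemma~\ref{lem-self-sim} only rescales a ball $B(z,r)$ around a chosen $z\in\bdry H$, while the arc $I[x,y]\subset N(\bdry H,2r)$ may traverse a large portion of $\bdry H$ and hence cannot be captured by a single rescaling. The paper therefore does \emph{not} reduce to a single fixed scale; it first proves that $\bdry H$ is uniformly porous (Lemma~\ref{lem-bdry-porous}), extracts from $I$ a chain of nearby points and pushes them into $A(\bdry H,r/L_1,2r)$ using porosity, then uses self-similarity plus ``does not locally disconnect'' only to join \emph{consecutive} chain points (which are $4r$-close) by short arcs in an annular neighbourhood, and finally cuts loops to obtain an arc that $4rL$-follows $I$. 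The ``following'' property is produced by this chain-then-concatenate step, not by compactness alone; your reference to \cite{Mac-08-quasi-arc} points in the right direction but the role of porosity is missing from your sketch.

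In the parabolic case, one-endedness of $P_0$ by itself gives connectivity outside large balls but not connectivity in an annulus of \emph{bounded ratio}; the paper uses finite presentation of $P_0$ (Lemma~\ref{lem-fin-pres-annuli}, via a van Kampen diagram argument) to obtain the required annular connectivity in $\Gamma(P_0)$. Also, the exponent in your correspondence is off: from $(a|a_O)\approx -\tfrac{1}{\eps}\log s$ and Lemma~\ref{prodhorrel}(1), Lemma~\ref{lem-horoball-dist2} one gets $d_{P_0}(q_a,q)\asymp s^{-1/\eps}$, not $s^{-1/(2\eps)}$. This is harmless for the argument but worth correcting.
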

This proposition is proved in the following two subsections.

\subsection{Avoiding parabolic points}
We prove Proposition~\ref{prop-avoidable-bdry} in the case $H$ is a horoball.
This is the content of the following proposition.
\begin{proposition}\label{prop-thick-parabolics}
	Suppose $(G, \cP)$ is relatively hyperbolic, $\bdry (G,\cP)$ is connected and locally connected with no global cut points,
	and all peripheral subgroups are one-ended and finitely presented.
	Then there exists $L \geq 1$ so that for any horoball $O \in \cO$,
	$a_O = \bdry O \in \bdry (G, \cP)$ is $L$-avoidable on scales below $e^{-\eps d(w, O)}$.
\end{proposition}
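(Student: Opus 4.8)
The plan is to translate the problem into one about the coset of the peripheral group carrying $O$, using the horoball geometry of Lemma~\ref{prodhorrel}, and there to reroute arcs around the relevant ball using the one-endedness of that peripheral group in the form of Lemma~\ref{ray}, together with the linear connectedness of $\bdry(G,\cP)$ (Proposition~\ref{prop-bdry-lin-conn}) to pass back and forth between the boundary and the coset. Fix $O\in\cO$, let $hP$ be the corresponding left coset, $q\in hP$ the point where a geodesic $[w,a_O)$ enters $O$, and write $d_O=d(w,O)\approx d(w,q)$ and $\delta_O=e^{-\eps d_O}$. We must produce $L\geq1$ so that for every $r<\delta_O/2L$, every arc $I$, and all $x,y\in I\cap A(a_O,r,2r)$ with $I[x,y]\subset N(a_O,2r)$, there is an arc $J\subset A(a_O,r/L,2rL)$ from $x$ to $y$ which $(4rL)$-follows $I[x,y]$.

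First I would set up a dictionary between $\bdry(G,\cP)$ near $a_O$ and the coset $(hP,d_P)$. If $L\geq1$ and $r<\delta_O/2L$, then every $c$ with $\rho(c,a_O)\leq 2r$ has $(c|a_O)\geq d_O$, so by Lemma~\ref{prodhorrel}(1) and Lemma~\ref{lem-horoball-dist2} it is joined to $a_O$ by a geodesic through a coarsely well-defined \emph{turning point} $q_c\in hP$, with
\[
	\rho(c,a_O)\ \asymp\ \delta_O\, d_P(q_c,q)^{-\eps},
\]
while Lemma~\ref{prodhorrel}(2) controls $\rho(c,c')$ for $c,c'$ near $a_O$ in terms of $q_c,q_{c'}$ and their $d_P$-distances to $q$. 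Set $n=(\delta_O/r)^{1/\eps}$. Then $I[x,y]\subset N(a_O,2r)$ says exactly that the turning points of the points of $I[x,y]$ stay at $d_P$-distance $\geq n/C$ from $q$, with $q_x,q_y$ at $d_P$-distance $\asymp n$ from $q$; subdividing $I[x,y]$ finely yields a discrete path $v_0=q_x,v_1,\dots,v_m=q_y$ in $hP$ with uniformly short jumps and with all vertices at $d_P$-distance $\geq n/C$ from $q$. Conversely, given any discrete path $\Pi$ in $hP$ with short jumps, Lemma~\ref{prodhorrel}(4) lets me lift each vertex to a point of $\bdry(G,\cP)$ having that turning point; by the estimates above consecutive lifts are $\ll r$ apart, so joining them by short arcs from Proposition~\ref{prop-bdry-lin-conn} turns a path $\Pi$ confined to the annular region $\{u\in hP:\ n/L'\leq d_P(u,q)\leq L'n\}$ into an arc $J\subset A(a_O,r/L,2rL)$, for $L\asymp (L')^{\eps}$; and if $\Pi$ is built to follow $(v_i)$, then $J$ $(4rL)$-follows $I[x,y]$.

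The construction of $\Pi$ is the heart of the matter, and it is here that one-endedness enters (equivalently, by Proposition~\ref{prop-bdry-nice}, that $a_O$ is not a local cut point). The path $(v_i)$ coming from $I[x,y]$ already satisfies the inner constraint, $d_P(\cdot,q)\geq n/C$; its only defect is that it may stray to $d_P$-distance far exceeding $L'n$ from $q$. Consider a maximal excursion of $(v_i)$ on which $d_P(\cdot,q)>\tfrac12L'n$: it joins two vertices at $d_P$-distance $\asymp L'n$ from $q$ while remaining at distance $\geq\tfrac14L'n$, hence --- $P$ being one ended --- lies in the unique unbounded component of the complement of the ball of radius $\tfrac14L'n$ about $q$ in $hP$. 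I would replace it by a path between its endpoints that stays at $d_P$-distance between $n/C$ and $L'n$ from $q$: Lemma~\ref{ray} pushes each endpoint radially outward while keeping its distance to $q$ bounded below by a fixed fraction of $L'n$, after which, inside that unbounded component, the two resulting points are joined within a ball of radius comparable to $L'n$; enlarging $L'$ (hence $L$) absorbs the multiplicative losses, and since all geodesics used remain far from $q$ the inner constraint survives. Doing this to every excursion produces the required $\Pi$.

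The step I expect to be the main obstacle is exactly this construction of $\Pi$: arranging that the rerouted coset path stay simultaneously bounded away from $q$ (via Lemma~\ref{ray}) and not too far from $q$ (via the unbounded-component argument) while still following $I[x,y]$ faithfully enough is delicate, and one must check that the two radii bounding the annular region do not drift apart as excursion after excursion is rerouted --- it is in making these estimates uniform that the finite presentation and one-endedness of the peripheral groups (the same hypotheses underlying Theorem~\ref{thm-bowditch}) are needed.
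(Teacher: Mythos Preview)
Your overall shape is right --- translate to the coset, reroute there, lift back via linear connectedness --- and this is what the paper does.  But there are two genuine gaps.

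First, a simplification you miss: since $a_O$ is a \emph{single point}, the $(4rL)$-following condition is automatic.  Any arc $J\subset A(a_O,r/L,2rL)$ lies in $B(a_O,2rL)$, which has diameter at most $4rL$, and $I[x,y]$ lies in the same ball; so $J$ trivially $(4rL)$-follows $I[x,y]$ regardless of its shape.  Thus you need only join $x$ and $y$ by \emph{some} arc in the annulus, and all your effort tracking how $\Pi$ follows $(v_i)$ is unnecessary.

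Second, and more seriously, the heart of your construction of $\Pi$ rests on a claim your tools do not provide.  The issue is the \emph{outer} bound: you must connect two points at $d_P$-distance $\asymp n$ from $q$ by a path that stays within $d_P$-distance $\lesssim n$ of $q$ (while also staying $\gtrsim n$ away).  One-endedness of $P$ says only that points outside a large ball can be joined outside that ball --- it gives no upper bound on how far the connecting path strays.  Lemma~\ref{ray} concerns geodesic rays going to infinity and is of no help for this upper bound; your phrase ``pushes each endpoint radially outward'' suggests a misreading of what that lemma yields.  The paper supplies the missing ingredient as Lemma~\ref{lem-fin-pres-annuli}: in a finitely presented one-ended group, any two points at distances $r_x\leq r_y$ from the identity (with $r_x\geq 2M$, $M$ the maximal relator length) can be joined inside the annulus $A(e,r_x/3,3r_y)$.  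The proof uses a van~Kampen diagram over the finite presentation, and this is where finite presentation enters essentially --- it is not absorbable into one-endedness alone.  With that lemma in hand the argument runs as you sketch: pull $x,y$ back to $q_a,q_b\in hP$, connect them in an annulus by Lemma~\ref{lem-fin-pres-annuli}, and lift the resulting chain back to $\bdry(G,\cP)$ via Lemma~\ref{prodhorrel} and Proposition~\ref{prop-bdry-lin-conn}.
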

The reason for restricting to this scale is that this is where the geometry of the boundary is determined
by the geometry of the peripheral subgroup.
Recall from Proposition~\ref{prop-bdry-nice} that such parabolic points are not local cut points.

The first step is the following simple lemma about finitely presented, one-ended groups.
It essentially states that we can join two large elements of such a group without going too close or too far from the identity.
Near a parabolic point, this allows us to prove Proposition~\ref{prop-thick-parabolics} by joining two suitable points
without going to far from or close to the parabolic point.

\begin{lemma}\label{lem-fin-pres-annuli}
	Suppose $P$ is a finitely generated, one-ended group, given
	by a (finite) presentation where all relators have
	length at most $M$, and let $\Gamma(P)$ be its Cayley graph.
	Then any two points $x, y \in \Gamma(P)$ such that $2M\leq r_x\leq r_y$, where $r_x=d(e,x)$ and $r_y=d(e,y)$,
	can be connected by an arc in $A(e, r_x/3, 2r_y) \subset \Gamma(P)$.
\end{lemma}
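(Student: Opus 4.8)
The plan is to use the one-endedness of $P$ together with the bounded length of relators to reroute a naive path around the ball $B(e, r_x/3)$. First I would fix a geodesic $[x,y]$ in $\Gamma(P)$; since $P$ is one ended, $\Gamma(P)\setminus B(e, r_x/3)$ is connected, so $x$ and $y$ lie in the same connected component and can be joined by \emph{some} path $\sigma$ avoiding $B(e, r_x/3)$ (simply concatenate a geodesic from $x$ to $y$ with the necessary detours, or use the geodesic $[x,y]$ directly where it already avoids the ball and patch the excursions that dip inside). The issue is controlling the outer radius: a priori $\sigma$ could wander arbitrarily far from $e$. To fix this, I would radially project: for any vertex $v$ with $d(e,v) > 3r_y$, replace $v$ by a point on a geodesic $[e,v]$ at distance roughly $3 r_y$ from $e$. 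Concretely, I would work with the path $[x,y]$, and for each maximal subpath lying outside $\overline{B}(e, 3r_y)$, replace it by a path running along the sphere of radius $\approx 3r_y$; since $[x,y]$ is a geodesic, its endpoints on that sphere are not too far apart, and connectivity of the (metric) sphere region — again from one-endedness — lets us reconnect inside $A(e, 3r_y - C, 3r_y)$.

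The cleaner route, and the one I would actually write, is to handle the inner and outer constraints with a single "corridor" argument. Consider the annular region $A = A(e, r_x/3, 3r_y)$. By one-endedness, the complement of $B(e, r_x/3)$ is connected; I claim moreover that $A$ itself is connected and that any two of its points can be joined within it. Granting this claim, since $x,y \in A$ (note $r_x/3 \le r_x \le r_x, r_y \le r_y < 3r_y$, and $r_x \ge 2M > 0$ so $x \ne e$), we get a path in $A$, and passing to an arc is standard. So the real content is the claim about $A$.

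To prove the claim I would argue as follows. Take the geodesic $[x,y]$. The outer excursions (where $d(e,\cdot) > 3r_y$) are handled by noting $[x,y]$ starts and ends in $\overline{B}(e, r_y)$, so any excursion beyond radius $3r_y$ enters and exits through the sphere of radius $3r_y$, travelling distance $> 4 r_y > 2 d(e,x) \ge$ the diameter needed — more carefully, I replace such an excursion by projecting the excursion endpoints radially inward onto a fixed sphere and connecting them there, using that the $1$-neighbourhood of a sphere of radius $\rho$ is connected when $\rho$ is not too small (a consequence of one-endedness applied to shells, which is where the relator-length bound $M$ and the hypothesis $r_x \ge 2M$ enter: a short relator cannot jump across a shell of width comparable to $M$, so consecutive points of a path through the shell stay connected within a bounded-width annulus). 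The inner excursions (where $d(e,\cdot) < r_x/3$) are handled symmetrically, again using that $\Gamma(P) \setminus B(e, r_x/3)$ is connected to reroute each dip, and radial projection to keep the detour within radius $3r_y$ (here we use $r_x \le r_y$, so $r_x/3$-scale detours are comfortably inside radius $3r_y$). Concatenating the surviving pieces of $[x,y]$ with these finitely many reroutings yields a path in $A(e, r_x/3, 3r_y)$; extracting an embedded arc from it (e.g.\ by the standard fact that a path in a locally path-connected Hausdorff space contains an arc between its endpoints, or just by removing loops in the graph) completes the proof.

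The main obstacle is the outer radius bound: keeping the rerouted path inside radius $3r_y$ while detouring around $B(e,r_x/3)$. One-endedness gives connectivity of the complement of a ball for free, but not a priori any diameter control on the connecting path. The key quantitative input that resolves this is that in a group with all relators of length $\le M$, a path cannot cross a "thick shell" $A(e, \rho, \rho + M)$ without having consecutive vertices whose projections to the sphere $S(e,\rho)$ are within $M$ of each other — so shells of width $\ge M$ are "connected enough" to serve as corridors, and radial projection along geodesics changes distances to $e$ in a controlled (1-Lipschitz-ish) way. This is exactly why the hypothesis $2M \le r_x$ appears: it guarantees the inner sphere $S(e, r_x/3)$ sits in a region thick enough (relative to $M$) for the corridor argument, and since $r_x \le r_y$ the outer sphere at radius $3r_y$ is likewise fine. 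I would isolate this "thick shells are connected corridors" statement as the one computational step and keep the rest of the argument at the level of concatenation and arc-extraction.
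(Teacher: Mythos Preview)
Your proposal identifies the right \emph{ingredients} --- one-endedness, the relator bound $M$, radial projection --- but the crucial step is asserted rather than proved. The heart of the lemma is exactly the claim you label ``thick shells are connected corridors'': that two points on (or near) the sphere $S(e,\rho)$ can be joined inside an annulus of width comparable to $M$. Your justification for this (``a path cannot cross a thick shell without having consecutive vertices whose projections are within $M$ of each other'') does not establish connectivity of the annulus: nearby projections need not be joinable \emph{within the shell}, and in any case projection to a sphere in a Cayley graph is not a well-behaved map. One-endedness alone gives connectivity of $\Gamma(P)\setminus B(e,\rho)$, not of any thin annulus, so the outer-radius control is genuinely missing.

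The paper supplies this step by a van Kampen diagram argument. First it uses Lemma~\ref{ray} (geodesic rays avoiding $B(e,r_x/3)$, resp.\ $B(e,r_y/3)$) to push $x,y$ outward to points $x',y'$ on the sphere of radius $2r_y$, keeping the radial segments inside $A(e,r_x/3,2r_y)$. This reduces everything to a single connectivity problem on that sphere, and sidesteps the difficulty that a geodesic $[x,y]$ may pass near $e$. To connect $x'$ to $y'$, it forms a simple loop $\beta_2$ from a path in the unbounded component outside $\overline{B}(e,2r_y)$ together with pieces of $[e,x']$ and $[e,y']$, fills $\beta_2$ by a diagram $\cD$ in the Cayley complex, and takes the union $\cD''$ of faces touching the level set $\{d(e,\cdot)=2r_y\}$. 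Since every $2$-cell has boundary length $\le M$, $\cD''\subset A(e,2r_y-M,2r_y+M)$; a short topological argument on $\partial\cD''$ then shows $x'$ and $y'$ lie in the same component. This is precisely where the relator bound and the hypothesis $r_x\ge 2M$ are used, and it is the missing piece in your outline.
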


\begin{proof}
	By Lemma~\ref{ray}, we can find an infinite geodesic ray in $\Gamma(P)$ from $x$ which does not pass through
	$B(e, r_x/3)$.  Let $x'$ be the last point on this ray satisfying $d(e, x') = r_y$.
	Do the same for $y$, and let $y'$ denote the corresponding point.
	Note that $x'$ and $y'$ lie on the boundary of the unique unbounded component of
	$\{ z \in \Gamma(P) : d(e,z) \geq r_y \}$, which we denote by $Z$.
	We prove the lemma by finding a path from $x'$ to $y'$ contained in $A(e,r_y-M,r_y+M)$.
	
	Let $\beta_1$ be an arc joining $x'$ and $y'$ in $Z$. 
	It suffices to consider the case when $\beta_1 \cap \overline{B}(e, r_y) = \{x', y'\}$.
	Let $p$ be the first point of $[y', e]$ that meets $[e, x']$ in $\Gamma(P)$.
	Then the concatenation of $\beta_1, [y', p]$ and $[p, x']$ forms a simple, closed loop $\beta_2$ 
	in $\Gamma(P)$.
	
	As $\beta_2$ represents the identity in $P$, there exists a diagram $\cD$ for $\beta_2$: a connected, simply connected,
	planar $2$-complex $\cD$ together with a map of $\cD$ into the Cayley complex $\Gamma^2(P)$
	sending cells to cells and $\partial \cD$ to $\beta_2$.
	
	Let $\cD' \subset \cD$ be the union of closed faces $B \subset \cD$ which have a point $u \in \partial B$ with
	$d(u, e) = r_y$.  Let $\cD''$ be the connected component of $x'$ in $\cD'$.  
	Let $\gamma : \mathbb{S}^1 \ra \partial \cD''$ be the outer boundary path of $\cD'' \subset \R^2$.
	
	If either $\beta_1$ or $[y',p]\cup[p,x']$ live in $A(p, r_y-M, r_y+M)$, we are done.
	Otherwise, as we travel around $\gamma$ from $x'$, in one direction we must take a value $>r_y$, and in the other a value $< r_y$,
	thus there is a point $v \in \gamma \setminus \{x'\}$ with $d(e, v) = r_y$.
	If $v$ is in the interior of $\cD$, the adjacent faces are in $\cD'$, giving a contradiction.  So $v \in \beta_2$, and
	$v$ must be $y'$.  Thus there is a path from $x'$ to $y'$ in $\cD' \subset A(e, r_y-M, r_y+M)$.
\end{proof}

We can now prove the proposition.  The idea is similar to Lemma~\ref{lem-bdry-chains}, Case~1:
we find a suitable path using Lemma~\ref{lem-fin-pres-annuli} and push it out to $\bdry X$.
\begin{proof}[Proof of Proposition~\ref{prop-thick-parabolics}]
	By Proposition~\ref{prop-bdry-nice}, parabolic points in the boundary are not local cut points.
	
	We claim that there exists an $L \geq 1$ so that for any parabolic point $a_O$,
	any $r \leq e^{-\eps d(w, O)}/2L$, and any $a, b \in A(a_O, r, 2r)$, there exists
	an arc $J \subset A(a_O, r/L, 2rL)$ joining $a$ to $b$.  
	
	This claim suffices to prove the proposition, because 
	the $4rL$-following property automatically follows from $\diam(B(a_O, 2rL)) \leq 4rL$.
	We now proceed to prove the claim.
	
	Using the notation of Lemma~\ref{prodhorrel}, let $gP$ be the left coset of a peripheral group that corresponds to $O$, 
	and let $q \in gP$ be the first point of $[w,a_O)$ in $\overline{O}$.
	Recall that $d_O = d(w, O) \approx d(w, q)$.
	Let $q_a, q_b$ be the last points of $(a_O, a), (a_O, b)$ contained in $\overline{O}$.
		
	We begin by describing the positions of $q$, $q_a$ and $q_b$ in the path metric $d_P$ on $gP \subset X$.
	We write $x\asymp_C y$ if the quantities $x,y$ satisfy $x/C\leq y\leq Cx$. 
	
	Since
	\[
		e^{-\eps (a|a_O)} \asymp_{C_0} \rho(a, a_O) \asymp_2 r \leq e^{-\eps d_O} /2L,
	\]
	we have, for some $C_1 = C_1(C_0, \eps)$,
	\[
		(a | a_O) \approx_{C_1} \log(r^{-1/\eps}) \geq d_O + \log(2L)/\eps,
	\]
	so for $L \geq L(C_1,\eps)$, we have $(a | a_O) \geq d_O$, and likewise $(b | a_O) \geq d_O$.
	
	Lemmas \ref{lem-horoball-dist2} and \ref{prodhorrel}(1), with $A$ and $E$ as before, give
	\begin{multline}\label{eq-avdpar1}
		2 \log (d_P(q_a, q)) \approx_A d(q_a, q) \approx_{2E} 2((a | a_O) - d_O) \\
			\approx_{2C_1} 2 \log(r^{-1/\eps} e^{-d_O}) \geq 2 \log(2L)/\eps,
	\end{multline}
	so 
	$d_P(q_a, q) \asymp_{C_2} r^{-1/\eps} e^{-d_O}$, for $C_2 = C_2(A, E, C_1)$.
	Let $r_P$ be the smaller of $d_{P}(q_a, q), d_{P}(q_b, q)$,
	and notice that the larger value is at most $C_2^2 r_P$.
	
	We now use Lemma~\ref{lem-fin-pres-annuli} to find a chain of points
	$q_a=q_0, q_1, \ldots, q_n=q_b$ in $gP$ joining $q_a$ to $q_b$ in $gP$,
	so that $q_i \in A(q, r_P/3, 2r_P C_2^2 )$ in the metric $d_P$.
	Let $c_i = q_i q_0^{-1} a$, for $i=0, \ldots, n-1$, and set $c_n = b$.
	This gives a chain of points $\{c_i\}$ in $\bdry X$ joining $a$ to $b$.
	(Observe that we can take $(a_O, c_i) = q_iq_0^{-1}(a_O, a)$.)
	
	Lemma~\ref{lem-horoball-dist2} and \eqref{eq-avdpar1} imply that
	\[
		d(q_i,q) \approx 2 \log (d_P(q_i, q)) \approx 2 \log (d_P(q_a, q)) \approx d(q_a,q) \gtrsim 2\log(2L)/\eps,
	\]
	with total error $C_3 = C_3(A, E, C_1, C_2)$.
	So if $L \geq L(C_3, \eps)$, we have $d(q_i, q) > E$, and therefore 
	$(c_i | a_O) \geq d_O$ by Lemma~\ref{prodhorrel}(3).
	
	Now Lemma~\ref{prodhorrel}(1) shows that 
	\[
		(c_i | a_O) \approx_E d(q_i, q)/2 + d_O \approx_{C_3} d(q_a, q)/2 + d_O \approx_E (a | a_O),
	\]
	so $\rho(c_i, a_O) \asymp \rho(a, a_O) \asymp r$, with total error $C_4 = C_4(C_3, E)$,
	that is $c_i \in A(a_O, r/C_4, C_4 r)$.
	
	We now wish to join each $c_i$ and $c_{i+1}$ in a suitable annulus around $a_O$.
	Consider the geodesics between $w$, $c_i$ and $c_{i+1}$, and observe that
	$d(q_i, q) > E > \del_X$, and $d_P(q_i, q_{i+1}) = 1$.  From this, and \eqref{eq-avdpar1}, we see that
	\[  
		(c_{i} | c_{i+1}) \gtrsim d(w, q_i) \approx d_O + d(q, q_a) \approx 
			2 \log(r^{-1/\eps}) -d_O,
	\]
	and so, for suitable $C_5$,
	\[
		\rho(c_i, c_{i+1}) \leq C_0 e^{-\eps (c_i | c_{i+1})}
			\leq C_{5} r^2 e^{\eps d_O} \leq C_{5} r/2L.
	\]
	
	By Proposition~\ref{prop-bdry-lin-conn}, $\bdry X$ is $L'$-linearly connected,
	so if $L \geq C_4 C_5 L'$ 
	we can join $c_i$ to $c_{i+1}$ in $B(c_i, L' C_5 r/2L) \subset B(c_i, r/2C_{4})$.
	Since $c_i \in A(a_O, r/C_4, C_4r)$ 
	we have joined $c_i$ to $c_{i+1}$ in
	$A(a_O, r/2C_{4}, 2C_4 r)$, and the claim follows.
\end{proof}

\subsection{Avoiding hyperbolic subgroups}

In this section we complete the proof of Proposition~\ref{prop-avoidable-bdry} for $H = gP$, where
$P \in \cP_2$.  By assumption, $\bdry H \subset \bdry X$ does not locally disconnect $\bdry X$.

First, we show that boundaries of peripheral groups are porous.
\begin{definition}[{e.g.\ \cite[14.31]{Hei-01-lect-analysis}}]\label{def-porous}
	A set $V$ in a metric space $(Z,\rho)$ is \emph{$C$-porous on scales below $\delta$} if for any $z \in V$ and $0 < r < \delta$,
	there exists $z' \in B(z, r)$ so that $\rho(z', V) \geq r/C$.
\end{definition}

\begin{lemma}\label{lem-bdry-porous}
	Under the assumptions of Proposition~\ref{prop-avoidable-bdry},
	there exists $L_1$ so that for every $H \in \cH$, 
	$\bdry H \subset \bdry X$ is $L_1$-porous on scales below $e^{-\eps d(w, H)}$.
\end{lemma}
The proof follows from the partial self-similarity of Corollary~\ref{cor-self-sim2} and the fact
that for any $H \in \cH$, $\bdry H$ has empty interior in $\bdry X$.
\begin{proof}
	Observe that if $H$ is a horoball, then $\bdry H$ is a point in a connected space, and so is automatically porous.
	
	If the conclusion is false, we can find a sequence of cosets $H_n = g_n P_n$, for $P_n \in \cP_2$,
	points $a_n \in \bdry H_n$ and values 
	$r_n \leq e^{-\eps d(w, H_n)}$ so that $N(\bdry H_n, r_n/n) \supset B(a_n, r_n)$.
	
	Let $d_{r_n} \approx \log(r_n^{-1/\eps})$ be given as in Corollary~\ref{cor-self-sim2} with $z=a_n$, $r=r_n$.
	Assume that we can take a subsequence and reindex so that $d_{r_n} \geq 0$ for all $n$.  Let $x_n \in [w, a_n)$ be the point satisfying $d(w, x_n) = d_{r_n}$.
	Every $H \in \cH$ is uniformly quasi-convex, see Lemma~\ref{lem-x-asymp-tree-graded}(2),
	and $r_n \leq e^{-\eps d(w, H_n)}$, so $d(x_n, H_n)$ is uniformly bounded for any such $x_n$.
	Therefore there exists $h_n \in G$ so that $d(h_n^{-1} w, x_n) \leq D$, for some uniform constant $D$.
	
	Thus Corollary~\ref{cor-self-sim2} implies that there exists $L_0 = L_0(D)$ and $L_0$-bi-Lipschitz
	maps $f_n: (B(a_n, r_n), \frac{1}{r_n}\rho) \ra \bdry X$ 
	induced by the action of $h_n$, so that $B(f_n(a_n), 1/L_0) \subset f_n(B(a_n, r_n))$.
	
	As $h_n H_n = H_n'$ for some $H_n' \in \cH$, and $d(H_n', w)$ is uniformly bounded, we may take a subsequence so that
	$H_n' = H' \in \cH$ for all $n$, and moreover that $f_n(a_n) \in \bdry H'$ converges to $a \in \bdry H'$.
	Therefore, for all sufficiently large $n$,
	\begin{align*}
		B(a, 1/2L_0) & \subset B(f_n(a_n), 1/L_0)
			\subset f_n(B(a_n, r_n)) \\
			& = f_n(B(a_n, r_n) \cap N(\bdry H_n, r_n/n))
			\subset N(\bdry H', L_0/n),
	\end{align*}
	so $a$ is in the interior of $\bdry H' \subset \bdry X$, since $\bdry H'$ is closed 
	in $\bdry X$.  
	This is a contradiction because $\bdry H'$ is not all of $\bdry X$ 
	(proper peripheral subgroups of a relatively hyperbolic group are of infinite index),
so if $a$ is a point of $\bdry H'$, one can use the action of $H'$ to
find points in $\bdry X \setminus \bdry H'$ that are arbitrarily close to
$a$.

	There remains the case where infinitely many $d_{r_n} < 0$.
	But then for such a subsequence we have all $r_n > C > 0$, and $d(e, H_n)$ is uniformly bounded.  Therefore we can proceed as above to take a subsequence so that $H_n=H'$ for all $n$ and $B(f_n(a_n), C) \subset N(\bdry H_n, r_n/n) \subset N(\bdry H', 1/n)$.  The rest of the argument is the same.
\end{proof}

We continue with the proof of Proposition~\ref{prop-avoidable-bdry}.
The basic idea is to use partial self-similarity and a compactness argument
to upgrade the topological condition of not locally disconnecting
to the quantitative $L$-avoidable condition.

We begin with the following lemma.
\begin{lemma}\label{lem-avoid-hyp-1}
	Given $L_1\geq 1$, there exists $L_2=L_2(X, L_1)$ independent of $H = gP$, $P \in \cP_2$,
	so that for any $r \leq e^{-\eps d(w,H)}/L_2$, 
	and any two points $u, v \in A(\bdry H, r/L_1, 2r)$ so that $\rho(u, v) \leq 4r$, 
	there exists an arc 
\[ K \subset A(\bdry H, r/L_2, 2L_2r) \] joining $u$ to $v$ with $\diam(K) \leq 2L_2 r$.
\end{lemma}
\begin{proof}
	As in Lemma~\ref{lem-bdry-porous}, we assume the conclusion is false, and will use self-similarity 
	to derive a contradiction.
	
	If the conclusion is false, there is a sequence of $H_n = g_n P_n$ with $P_n \in \cP_2$,
	$r_n \leq e^{-\eps d(w, H_n)}/n$, and points $a_n \in \bdry H_n$ and 
	$u_n, v_n \in B(a_n, 6r_n) \cap A(\bdry H_n, r_n/L_1, 2r_n)$
	so that there is no arc of diameter at most $2nr_n$ joining $u_n$ to $v_n$ in $A(\bdry H_n, r_n/n, 2nr_n)$.
	
	As before, the geodesic $[w, a_n)$ essentially travels from $w$ straight to $H_n$ then along $H_n$ to $a_n \in \bdry H_n$.
	More precisely, there are constants $C_1$ and $D$ depending on the uniform quasi-convexity constant of $H_n$ and $\del_X$ so that
	for any $r \leq e^{-\eps d(w, H_n)}/C_1$, the point $x$, defined by 
	Corollary~\ref{cor-self-sim2}(1) applied to $z = a_n$ and $r$, lies within distance $D$ of $Gw$.
	Let $L_0=L_0(D)$ be the corresponding constant from Corollary~\ref{cor-self-sim2}.
	
	Let $L'$ be the linear connectivity constant of $\bdry X$, and set $r_n' = 10L_0^2L' r_n$.
	For $n$ large enough, $r_n' \leq e^{-\eps d(w, H_n)}/C_1$, and so we find $h_n \in G$ that induces
	a $L_0$-bi-Lipschitz map $f_n : (B(a_n, r_n'), \frac{1}{r_n'}\rho) \ra \bdry X$, with
	$B(f_n(a_n), 1/L_0) \subset f_n(B(a_n, r_n'))$.  Note that $h_n$ maps $H_n$ to some $H_n' \in \cH$, with $f_n(a_n) \in \bdry H_n'$.
	As $d(w, H_n')$ is uniformly bounded, we can take a subsequence so that $H_n' = H' \in \cH$.
	
	The images $f_n(u_n), f_n(v_n)$ lie in $B(f_n(a_n), T ) \setminus N(\bdry H', t)$, where
	 $T = 6r_n L_0/r_n' < 1/L_0L'$ and $t = r_n/r_n'L_1L_0$ are independent of $n$.
	Let
	\[
		W = \{ (u, v, a) : a \in \bdry H', \{u, v\} \subset \overline{B}(a, T) \setminus N(\bdry H', t) \} \subset (\bdry X)^3,
	\]
	and define $f: W \ra (0, T]$ to be supremal so that for $(u,v,a) \in W$ there exists an arc
	joining $u$ to $v$ in $B(a, 1/L_0) \setminus N(\bdry H', f(u,v,a))$.
	We know that $f$ is positive because for any $(u,v,a) \in W$, $\overline{B}(a, T)$ lies in a connected open set $U \subset B(a, 1/L_0)$,
	and as $\bdry H'$ does not locally disconnect, $U \setminus \bdry H'$ is connected and we can join $u$ to $v$ in this set.
	
	Observe that by local connectivity $f$ is continuous, and $W$ is compact, so $f(u,v,a) \geq 2C_2 > 0$ for some $C_2$ and all $(u,v,a) \in W$.
	
	Now $(f_n(u_n), f_n(v_n), f_n(a_n)) \in W$, so there exists an arc $K$ joining $f_n(u_n)$ to $f_n(v_n)$ with
	$K' \subset B(f_n(a_n), 1/L_0) \setminus N(\bdry H', C_2)$.
	The preimage $K = f_n^{-1}(K')$ joins $u_n$ to $v_n$ so that 
	\[ K \subset B(a_n, r_n') \cap A(\bdry H, r_n'C_2/L_0, r_n'), \]
	which is a contradiction for large enough $n$.
\end{proof}

We now finish the proof of Proposition~\ref{prop-avoidable-bdry},
fixing constants $L_1\geq 2$ from Lemma~\ref{lem-bdry-porous} and $L_2 = L_2(X, L_1)$ from Lemma~\ref{lem-avoid-hyp-1}.

Let $H = gP$, $P \in \cP_2$ be fixed.
Suppose we are given $r \leq e^{-\eps d(w, H)}/L_2$, points $x, y \in A(\bdry H, r, 2r)$,
and an arc $I \subset N(H, 2r)$ with endpoints $x$ and $y$.

We build our desired arc $J$ from $x$ to $y$ in stages.
First, let $x=z_0', z_1', \ldots, z_m' = y$ be a (finite) chain of points that $0$-follows $I[x,y]$, so that
$\rho(z_i, z_{i+1}) \leq r$.
(The definition of $\iota$-follows is extended from arcs to chains in the obvious way.)
For each $i$, if $\rho(z_i', \bdry H) \leq r/L_1$, use
Lemma~\ref{lem-bdry-porous} to find a point $z_i$ at most $r/L_1+r$ away from $z'_i$, and outside
$N(\bdry H, r/L_1$).  Otherwise let $z_i = z_i'$.  

This new chain satisfies $\{z_i\} \subset A(\bdry H, r/L_1, 2r)$, and
for every $i$, $\rho(z_i, z_{i+1}) \leq r+2(r/L_1+r) \leq 4 r$.
It $2r$-follows $\{z_i'\}$, and thus $\{z_i\}$ also $2r$-follows $I$.

By Lemma~\ref{lem-avoid-hyp-1} for each $i$ there exist an arc $J_i$ joining $z_i$ and $z_{i+1}$ 
which lies in $A(\bdry H, r/L_2, 2L_2 r)$ and has $\diam(J_i) \leq 2L_2 r$.
From this, we extract an arc $J$ by cutting out loops: travel along $J_0$ until you meet $J_j$
for some $j \geq 1$, and at that point cut out the rest of $J_0$ and all $J_k$ for $1 \leq k < j$.
Concatenate the remainders of $J_0$ and $J_j$ together, and continue along $J_j$.

The resulting arc $J$ will $2L_2 r$-follow the chain $\{z_i \}$, and so it will $4L_2 r$-follow
$I$ as desired.\qed

\section{Quasi-arcs that avoid obstacles}\label{sec-qarcs-that-avoid}

A quasi-arc is a metric space which is quasisymmetrically homeomorphic
to $[0,1]$ with its usual metric.
Tukia and V\"ais\"al\"a showed that one can equivalently define a quasi-arc 
as a metric space which is a topological arc, and which is doubling and
linearly connected~\cite[Theorem 4.9]{TV-80-qs}.  (If this arc is
$\lambda$-linearly connected, we call the arc a \emph{$\lambda$-quasi-arc}.)

As discussed in the introduction, Tukia showed that doubling and linearly
connected metric spaces contain quasi-arcs joining any two 
points~\cite[Theorem 1A]{Tuk-96-qarc}.

In this section 
we build quasi-arcs in a metric space that avoid specified obstacles.
This result can be viewed from the perspective of Diophantine approximation
for finite volume hyperbolic manifolds; see Example~\ref{ex-avoiding-dioph-approx}.
The methods we use build on the alternative proof of Tukia's theorem
found in \cite{Mac-08-quasi-arc}.

\subsection{Collections of obstacles}
The next definition gives us control on a collection of obstacles.
\begin{definition}
	Let $(Z, \rho)$ be a compact metric space.
	Let $\cV$ be a collection of compact subsets of $Z$ provided with some map 
	$D: \cV \ra (0, \infty)$, which we call a \emph{scale function}.
	
	The \emph{(modified) relative distance function} $\Delta: \cV \times \cV \ra [0, \infty)$ is
	defined	for $V_1, V_2 \in \cV$ as
	\[
		\Delta(V_1, V_2) = \frac{\rho(V_1,V_2)}{\min\{D(V_1),D(V_2)\}}.
	\]
	We say $\cV$ is \emph{$L$-separated} if for all $V_1, V_2 \in \cV$, if $V_1 \neq V_2$ then
	$\Delta(V_1, V_2) \geq \frac1L$.
\end{definition}

	As we saw in Section~\ref{sec-avoiding}, we often only have control on topology on a 
	sufficiently small scale.
	The purpose of the scale function is to determine the size of the neighbourhood of each 
	$V \in \cV$ on which we have this control.
	An example of a scale function is $D(V) = \diam(V)$, if every $V \in \cV$ has $|V| > 1$.
	In this case, $\Delta$ is precisely the usual relative distance function, 
	e.g.\ \cite[page 59]{Hei-01-lect-analysis}.
	
	The goal of this section is the following result.

\begin{theorem}\label{thm-modified-qarc}
	Let $(Z, \rho)$ be an $N$-doubling, $L$-linearly connected, compact metric space, and $L \geq 10, N \geq 1$ constants.
	Suppose $\cV$ is an $L$-separated collection of compact subsets of $Z$ 
	with scale function $D:\cV \ra (0, \infty)$,
	so that $V \in \cV$ is $L$-porous and $L$-avoidable on scales below $D(V)$ 
	(see Definitions~\ref{def-porous} and \ref{def-avoidable-set}).
	
	For any $\nu \geq 1$ there exists a constant $\lambda = \lambda(N, L, \nu)$
	so that given any two points $x, y \in Z$,
	if for all $V \in \cV$ we have $\rho(\{x,y\},V) \geq D(V)/\nu$,
	then $x$ and $y$ can be joined by a $\lambda$-quasi-arc 
	which satisfies $\rho(\gamma, V) \geq \frac1\lambda D(V)$ for each $V \in \cV$.
\end{theorem}
The following result shows that such endpoints exist.
\begin{proposition}\label{prop-porous-points}
	Let $(Z, \rho)$ be a compact metric space, and $L \geq 10$, $N \geq 1$ constants.
	Suppose $\cV$ is an $L$-separated collection of compact subsets of $Z$ 
	with scale function $D:\cV \ra (0, \infty)$,
	and suppose that each $V \in \cV$ is $L$-porous on scales below $D(V)$.
	
	For any $r \leq 1/5L$, given $p \in Z$ there exists $q \in Z$ so that
	$\rho(p,q) \leq \diam(Z) r$, and that 
	for all $V \in \cV$ we have $\rho(q, V) \geq D(V) r / 8L^2$.
\end{proposition}

For the remainder of this paper we will use the following 
corollary to Theorem~\ref{thm-modified-qarc}.
\begin{corollary}\label{cor-avoiding-qarc}
	Let $(Z, \rho)$ be a compact, $N$-doubling and $L$-linearly connected metric space.
	Suppose $\cV$ is an $L$-separated collection of compact subsets of $Z$ with scale 
	function $D : \cV \ra (0, \infty)$, and that each $V \in \cV$ is both $L$-porous and 
	$L$-avoidable on scales below $D(V)$.
	
	Then for a constant $\lam = \lam(N,L)$ there exists a $\lam$-quasi-arc $\gam$ in $Z$
	which satisfies $\diam(\gam) \geq \frac12 \diam(Z)$, and 
	$\rho(\gam,V) \geq \frac{1}{\lam} D(V)$ for each $V \in \cV$.
\end{corollary}
\begin{proof}[Proof of Corollary~\ref{cor-avoiding-qarc}]
	Let $x'$ and $y'$ be two points at maximum distance in $Z$.
	Apply Proposition~\ref{prop-porous-points} with $r=1/5L$ to $x'$ and $y'$ 
	to find points $x$ and $y$ which are at least $\diam(Z)/2$ apart, 
	and have $\rho(\{x,y\},V) \geq D(V)/40L^3$ for any $V \in \cV$.
	The corollary then follows from Theorem~\ref{thm-modified-qarc}.
\end{proof}

Two simple applications of Corollary~\ref{cor-avoiding-qarc} are the following.
\begin{example}
	Let $Z$ be the usual square Sierpi\'nski carpet in the plane, with Euclidean metric $d_{Euc}$,
	and let $\cV$ be the set of
	peripheral squares, i.e., boundaries of $[0,1]^2$, $[1/3,2/3]^2$, and so on.
	Define $D(V) = \diam(V)$ for each $V \in \cV$.
	
	The assumptions of Corollary~\ref{cor-avoiding-qarc} are satisfied for suitable $N$ and $L$,
	so there exists some $\lambda$ and a $\lambda$-quasi-arc $\gamma$ in $Z$ which satisfies
	$d_{Euc}(\gamma, V) \geq \frac{1}{\lambda} \diam(V)$ for each $V \in \cV$.
\end{example}
It is not immediately obvious that there exists a point satisfying this last separation 
condition, let alone a quasi-arc,
although in the carpet it is possible to build such an arc by hand.
\begin{example}\label{ex-avoiding-dioph-approx}
	Let $M$ be a finite volume hyperbolic $n$-manifold, with $n \geq 3$,
	and a choice of base point $p \in M$.
	The universal cover of $M$ is $\tilde{M} = \HH^n$, and fix a
	lift $\tilde{p}$ of $p$.  Let $\cH$ be a collection of horoballs for
	the action $\pi_1(M) \curvearrowright \tilde{M}=\HH^n$.
	
	Let $Z = \bdry \HH^n = \Sph^{n-1}$, with $\pi_1(M)$ acting on $Z$.
	Let $\cV$ be the collection of parabolic points $\bdry H \in Z$, for $H \in \cH$,
	with scale function $D(H) = e^{-d(\tilde{p},H)}$.
	
	Points in $\Sph^{n-1}$ are avoidable and porous, and $\Sph^{n-1}$ is
	both doubling and linearly connected.
	The linear separation of $\cV$ follows from Lemma~\ref{lem-parabolic-points}.
	Theorem~\ref{thm-modified-qarc} applies to find many quasi-arcs in $\Sph^{n-1}$,
	which do not go too close to parabolic points.
	Moreover, geodesic rays from $\tilde{p}$ to these quasi-arcs do not go far into
	horoballs by Lemma~\ref{farfrombpoints}.
	
	Identifying $\Sph^{n-1}$ with the tangent sphere $T_p M$, this means at
	any point $p \in M$ we can find a compact subset $K \subset M$ so that
	there are lots of (quasi-arc) paths of directions in $T_p M$ with the geodesic
	rays in these directions living in $K$.
\end{example}

\subsection{Building the quasi-arc}

The way that Theorem~\ref{thm-modified-qarc} builds a quasi-arc is by an inductive process: 
starting with any arc in $Z$, push the arc away from the largest obstacles in $\cV$, 
then push it away from the next largest, and so on.  
While this is going on, one also ``cuts out loops'' in order to ensure the limit arc is a quasi-arc.
There is some delicacy involved in making sure the constants work out correctly.

As a warm-up, we show how to find points far from obstacles.
\begin{proof}[Proof of Proposition~\ref{prop-porous-points}]
	If $\cV = \emptyset$, the result is trivial.
	Otherwise, let $D_0 = \sup\{D(V):V \in \cV\}$.
	Observe that as every $V \in \cV$ is $L$-porous, we have $D_0 \leq L \diam(Z)$. 
	
	We filter $\cV$ according to size.
	For $n \in \N$, let 
	$\cV_n = \{ V \in \cV : r^n < D(V)/D_0 \leq r^{n-1} \}$, 
	where $r \leq 1/5L \leq 1/50$ is given.
	(Recall that $L \geq 10$.)
	Note that $N(V, D_0r^n/2L) \cap N(V', D_0r^n/2L) = \emptyset$ if
	$V$ and $V'$ are distinct elements of $V_n$, because
	$\cV$ is $L$-separated.
	
	Let $x_0 = p$, and proceed by induction on $n \in \N$.
	Suppose $\rho(x_{n-1},V) \leq D_0r^n/4L$ for some (unique) $V \in \cV_n$.
	Then as $V$ is $L$-porous on scales below $D(V) > D_0r^n/4L$,
	we can find $x_n \in Z$ so that 
	$\rho(x_{n-1}, x_n) \leq D_0r^n/4L+D_0r^n/4L=D_0r^n/2L$
	and $\rho(x_n,V) \in [ D_0r^n/4L^2, D_0r^n/4L]$.
	For any other $V' \in \cV_n$, we have
	\begin{align*}
		\rho(x_n, V') \geq \rho(V,V') - \rho(V, x_n) 
			\geq \frac{D_0r^n}{L}-\frac{D_0r^n}{4L} > \frac{D_0 r^n}{4L^2}.
	\end{align*}
	If no such $V$ exists, set $x_n = x_{n-1}$.  In either case, for all
	$V' \in \cV_n$, we have $\rho(x_n, V') \geq D_0r^n/4L^2.$
	
	The sequence $\{x_n\}$ converges to a limit $q$.
	Observe that for any $n \geq 0$,
	\begin{align*}
		\rho(x_n, q) & \leq \rho(x_n, x_{n+1}) + \rho(x_{n+1},x_{n+2}) + \cdots \\ 
			& \leq \frac{D_0r^{n+1}}{2L}+\frac{D_0r^{n+2}}{2L}+ \cdots
			= \frac{D_0 r^{n+1} }{2L(1-r)}.
	\end{align*}
	In particular, $\rho(p,q) = \rho(x_0,q) \leq D_0 r/2L(1-r) \leq \diam(Z)r$.
	
	For any $V \in \cV$, there exists $n$ so that $V \in \cV_n$, and we have
	\begin{align*}
		\rho(q,V) & \geq \rho(x_n, V) - \rho(x_n, q) 
			 \geq \frac{D_0 r^n}{4L^2} - \frac{D_0 r^{n+1}}{2L(1-r)} \\
			& = \frac{D_0 r^{n-1} r}{4L^2} \left(1-\frac{2rL}{1-r} \right) 
			 \geq \frac{D(V) r}{8L^2},
	\end{align*}
	where we used that $2rL/(1-r) \leq 1/2$.
\end{proof}

To find quasi-arcs, we need more machinery.
We now recall some terminology and results from \cite{Mac-08-quasi-arc}.
An arc $A$ in a doubling and complete metric space is an
{\em $\iota$-local $\lambda$-quasi-arc} if
$\diam(A[x,y]) \leq \lambda d(x,y)$ for all $x,\, y \in A$ such that
$d(x,y) \leq \iota$.  (See Definition~\ref{def-iota-follows} for the notion of
$\iota$-following.)
\begin{remark}\label{rmk-local-global-quasiarc}
	Any $\iota$-local $\lambda$-quasi-arc $\gamma$ is a $\lambda'$-quasi-arc with
	$\lambda' = \max\{ \lambda, \diam(\gamma)/\iota\}$.
\end{remark}

\begin{proposition}[{\cite[Proposition 2.1]{Mac-08-quasi-arc}}]\label{prop-coarsestr}
 Given a complete metric space $(Z, \rho)$ that is
 $L$-linearly connected and $N$-doubling, there exist constants
 $s=s(L,N)>0$ and $S=S(L,N)>0$ with the following property:
 for each $\iota > 0$ and each arc $A \subset X$, there exists an arc
 $J$ that $\iota$-follows $A$, has the same endpoints as $A$,
 and satisfies
 \begin{equation} \label{eq-cqa}
  \forall u,v \in J,\  \rho(u,v) < s\iota \implies
   \diam(J[u,v]) < S\iota.
 \end{equation}
\end{proposition}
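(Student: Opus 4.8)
The plan is to build $J$ by repeatedly "shortcutting" $A$: wherever a small ball of $Z$ captures a long subarc, reroute through a short connected set supplied by $L$-linear connectedness; the rerouting is organized across scales so that the final arc stays uniformly close to $A$ while acquiring the stated local quasi-arc property, and $N$-doubling is used to keep the combinatorics of the rerouting under control.

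First I would discretize. Fix a small scale $\tau=c(L)\iota$ and extract from $A$, in order, finitely many \emph{waypoints} $a=p_0,p_1,\dots,p_m=b$ with consecutive waypoints at distance $\le\tau$ and with "index difference $\ge 2$" forcing distance $>\tau$; these come from the last-return rule ($p_{i+1}:=$ the last point of $A$ after $p_i$ lying in $\overline B(p_i,\tau)$). That finitely many waypoints suffice needs a short compactness argument using that $A$ is compact and $Z$ is doubling (or one first replaces $A$ by a controlled sub-arc). All data of $A$ relevant at scale $\iota$ is now carried by this chain. Next comes the shortcutting. Call a sub-sequence $q_0,\dots,q_M$ of $(p_i)$ a \emph{candidate} if $q_0=a$, $q_M=b$ and consecutive distances are $\le\tau$; the basic move is: if $\beta>\alpha+1$ and $\rho(q_\alpha,q_\beta)\le\tau$, delete $q_{\alpha+1},\dots,q_{\beta-1}$ (the result is again a candidate). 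A candidate of minimal length then has all non-consecutive waypoints $>\tau$-separated. From it I form $J$ by concatenating, for each $\ell$, an arc $\eta_\ell$ from $q_\ell$ to $q_{\ell+1}$ of diameter $\le L\rho(q_\ell,q_{\ell+1})\le L\tau$ given by linear connectedness (taking an arc at the cost of increasing $L$), then passing to an embedded sub-arc of $\bigcup_\ell\eta_\ell$ with the same endpoints; this surgery, and the fact that it preserves "following," is the routine bookkeeping from \cite{Mac-08-quasi-arc}. The map $p\colon J\to A$ sending each $\eta_\ell$ to $q_\ell$ (and the endpoints of $J$ to those of $A$) then witnesses that $J$ $\iota$-follows $A$, provided $c(L)$ is small enough that $L\tau\le\iota$: indeed $J[x,y]\subset\bigcup_{i\le\ell\le j}N(q_\ell,L\tau)$, while $A[p(x),p(y)]=A[q_i,q_j]$ already contains $q_i,\dots,q_j$.

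The main obstacle is the local quasi-arc estimate. A single round of shortcutting removes "short redundancies" (non-consecutive waypoints that are directly close), but it does not remove a long, thin excursion of $A$ whose two strands stay $\tau$-separated at the waypoint scale while the continuous strands come far closer than $\tau$ — and because each rerouting arc $\eta_\ell$ already has diameter comparable to $L\tau$, one cannot repair this by merely shrinking $\tau$ at a single scale. Resolving it is where the argument becomes genuinely multi-scale: one iterates the shortcutting while re-discretizing and refining $\tau$ (doing $O(\log(\diam(A)/\iota))$ rounds), or proceeds scale-by-scale from $\diam(A)$ down to $\iota$ as in Tukia-type quasi-arc constructions. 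Here $N$-doubling is essential twice over — to bound the number of waypoints in any ball, hence to guarantee the iteration terminates in an honest arc, and to convert "boundedly many waypoints lie between $x$ and $y$" into "$\diam J[x,y]$ is small."

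Once the iteration is set up, what remains is essentially combinatorial: the individual reroutes have exponentially decaying diameters, so their cumulative displacement is summably bounded and the final $J$ still $\iota$-follows $A$ with the correct endpoints; and tracking the constants through the finitely many rounds yields $s=s(L,N)>0$ and $S=S(L,N)>0$ so that $\rho(x,y)<s\iota$ implies $\diam(J[x,y])<S\iota$. I would expect the write-up's bulk to be in phrasing the iteration cleanly (or, following \cite{Mac-08-quasi-arc}, in the scale-by-scale version) and in the packing estimates from $N$-doubling, with the discretization and the "following" verification being the easy parts.
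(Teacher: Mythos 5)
The paper does not prove this proposition; it is cited from \cite[Proposition 2.1]{Mac-08-quasi-arc} and used as a black box inside the proof of Theorem~\ref{thm-modified-qarc}, where Lemma~\ref{lem-approx} then iterates it over a geometric sequence of scales. So there is nothing in the paper to compare your attempt against, and I assess it on its own terms.

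Your single-scale construction is sound as far as it goes: the last-return discretization at scale $\tau=c(L)\iota$, the connecting arcs $\eta_\ell$ of diameter $\le L\tau$ from linear connectedness, the passage to an embedded subarc, and the verification of $\iota$-following are all fine. (A small redundancy: the last-return rule already forces non-consecutive waypoints to be $>\tau$-separated, so the subsequent minimal-candidate shortcutting changes nothing.) You also correctly identify the obstruction to \eqref{eq-cqa}: since each $\eta_\ell$ has diameter of order $L\tau\gg\tau$ and uncontrolled shape, two points $x\in\eta_i$, $y\in\eta_j$ with $|i-j|$ arbitrary can end up with $\rho(x,y)<s\iota$ while $\diam J[x,y]$ is unbounded. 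The gap is that this obstruction is never resolved. Both of your proposed fixes --- re-shortcutting for $O(\log(\diam(A)/\iota))$ rounds, or a coarse-to-fine Tukia-type scheme from $\diam(A)$ down to $\iota$ --- are left entirely to ``tracking the constants,'' yet producing $s,S$ depending only on $(L,N)$, and in particular not on $\diam(A)/\iota$, is precisely the content of the statement, and the claimed geometric decay of per-round corrections is exactly the sort of assertion the excursions you worried about could spoil again at a later round; it has to be proved, not asserted. There is also a structural reason to doubt the second route: a coarse-to-fine iteration across scales is what Lemma~\ref{lem-approx} already does on top of Proposition~\ref{prop-coarsestr}, so if the proposition itself were proved that way the two results in this section would be doing the same work twice, which strongly suggests the intended argument in \cite{Mac-08-quasi-arc} eliminates the bad excursions at a single scale (using doubling together with a rerouting radius that is a large $(L,N)$-dependent multiple of $\tau$) rather than by running through all scales. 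As written, your proposal does not supply that mechanism.
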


\begin{lemma}[{\cite[Lemma 2.2]{Mac-08-quasi-arc}}]\label{lem-approx}
 Suppose $(Z,\rho)$ is an $L$-linearly connected, $N$-doubling, complete
 metric space, and let  $s,\, S,\, \eps$ and $\del$
 be fixed positive constants satisfying
 $\del \leq \min\{\frac{s}{4+2S},\frac{1}{10}\}$.
 Now, if we have a sequence of arcs $J_0, J_1, \ldots, J_n, \ldots$ in $Z$,
  such that for every $n \geq 1$
 \begin{itemize}
  \item $J_{n}$ $\eps \del^n$-follows $J_{n-1}$, and
  \item $J_{n}$ satisfies \eqref{eq-cqa} with
   $\iota = \eps \del^n$ and $s,\,S$ as fixed above,
 \end{itemize}
 then the Hausdorff limit $J = \lim_\mathcal{H} J_n$
 exists, and is an $\eps \del^2$-local
 $\frac{4S+3\del}{\del^2}$-quasi-arc.
 Moreover, the endpoints of $J_n$ converge to the endpoints of $J$,
 and $J$ $\eps$-follows $J_0$.
\end{lemma}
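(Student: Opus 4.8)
The plan is to prove Theorem~\ref{thm-modified-qarc} by an iterative rerouting construction, applying Lemma~\ref{lem-approx} as a black box at the end to extract the limiting quasi-arc, and using $L$-avoidability at each stage to push the arc away from the obstacles in $\cV$. First I would fix the constants $s = s(L,N)$, $S = S(L,N)$ from Proposition~\ref{prop-coarsestr}, choose $\delta \leq \min\{\frac{s}{4+2S}, \frac{1}{10}\}$ (and small enough relative to $L$, as will be seen below), and pick $\eps > 0$ small. To start, since $Z$ is $L$-linearly connected and compact, pick two points $p, q \in Z$ with $\rho(p,q) \geq \frac12\diam(Z)$ (or $\frac{2}{3}$, to leave room) and join them by an arc; apply Proposition~\ref{prop-coarsestr} to get an initial arc $J_1$ with endpoints $p,q$ satisfying \eqref{eq-cqa} with $\iota = \eps$. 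The target is a sequence $J_1, J_2, \ldots$ with $J_{n+1}$ $\eps\delta^n$-following $J_n$ and each $J_{n+1}$ satisfying \eqref{eq-cqa} with $\iota = \eps\delta^n$, together with the additional property that $J_n$ avoids a neighbourhood of each $V \in \cV$ of radius comparable to $D(V)$ — after which Lemma~\ref{lem-approx} gives the local quasi-arc $J$, and a standard doubling/linear-connectedness argument (as in \cite{Mac-08-quasi-arc}) upgrades a $C$-local $C$-quasi-arc in a doubling space to a genuine $\lambda$-quasi-arc.

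The heart of the argument is the inductive step producing $J_{n+1}$ from $J_n$. The key observation is that the scales $D(V)$ partition $\cV$ into ``bands'': at stage $n$ we worry about those $V \in \cV$ with $D(V)$ of order $\eps\delta^{n}$ (more precisely in a dyadic-type range around $\eps \delta^n$), because these are exactly the $V$ where the current arc $J_n$, which behaves like a quasi-arc only on scales $\gtrsim \eps\delta^n$, might be running parallel to $\partial$-neighbourhoods of $V$. For each such $V$, look at the maximal subarcs of $J_n$ that enter $N(V, 2r_V)$ where $r_V \asymp \eps\delta^n$; by the \eqref{eq-cqa} property of $J_n$ such an entering/exiting excursion has controlled diameter, so its endpoints $x, y$ lie in $A(V, r_V, 2r_V)$ with $J_n[x,y] \subset N(V, 2r_V)$ (after possibly subdividing and using $L$-porosity to nudge endpoints into the annulus). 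Apply $L$-avoidability of $V$ on scales below $D(V)$: this requires $r_V \in (0, D(V)/2L)$, which is why the scale function and the banding are set up this way, and it produces a replacement arc $J \subset A(V, r_V/L, 2r_VL)$ with the same endpoints that $(4r_VL)$-follows $J_n[x,y]$. Splice these replacements into $J_n$ (simultaneously over all relevant $V$, noting that $L$-separation keeps the finitely many relevant $V$'s far enough apart that their $2r_VL$-neighbourhoods are disjoint once $\delta$ is chosen small compared to $1/L$), then run Proposition~\ref{prop-coarsestr} once more on the result to restore \eqref{eq-cqa} at scale $\iota = \eps\delta^n$; this is $J_{n+1}$. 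The following distance is $\lesssim r_V L \lesssim \eps \delta^n$, which we arrange to be $\leq \eps\delta^n$ by building a small constant factor into $\eps$ or $\delta$.

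The bookkeeping that must be maintained through the induction is a lower bound $\rho(J_n, V) \geq \frac{1}{\lambda_0} D(V)$ for every $V$ whose band index is $\leq n$: once we have rerouted around $V$ at its own stage, all subsequent rerouting moves points by a geometric tail $\sum_{m > n} \eps\delta^m L \lesssim \eps \delta^{n+1} L / (1-\delta)$, which is a small fraction of $r_V/L \asymp \eps \delta^n / L$ provided $\delta$ is small enough, so the avoidance survives all later stages. Passing to the Hausdorff limit $J$ preserves this bound, giving $\rho(J, V) \geq \frac1\lambda D(V)$ for all $V \in \cV$ simultaneously. The diameter bound $\diam(\gamma) \geq \frac12\diam(Z)$ follows because the endpoints $p, q$ are (by Lemma~\ref{lem-approx}) the limits of the endpoints of $J_n$, hence fixed, and $\rho(p,q)$ was chosen $\geq \frac12\diam(Z)$; a little care is needed if a $V$ happens to come close to $p$ or $q$ — one simply selects $p,q$ at the outset with a small safety margin, which is possible since each $V$ has positive distance to generic points and, if necessary, one uses $L$-porosity near the endpoints as well.

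I expect the main obstacle to be the \emph{simultaneity} of the rerouting within a single band: one must check that rerouting around one obstacle $V$ does not drag the arc back into the controlled neighbourhood of a different obstacle $V'$ of the same band. This is exactly what $L$-separation is for — $\rho(V, V') \geq \frac1L \min\{D(V), D(V')\}$, which dwarfs the $2r_V L$-scale of the reroute once $\delta \ll 1/L^2$ — but making the quantifiers line up cleanly (choosing $\delta$ first in terms of $L$, then $\eps$, then verifying every ``$\lesssim$'' above is a genuine ``$\leq$'' with the chosen constants, and that the endpoint-nudging via $L$-porosity doesn't itself violate separation) is the delicate part. A secondary technical point is handling obstacles $V$ with $D(V)$ large relative to $\diam(Z)$: these are dealt with at stage $n = 1$ (or conceptually at ``stage $0$''), and one must ensure $r_V < D(V)/2L$ still holds there, which follows since $\eps$ is small, or else such $V$ is simply never approached by an arc of diameter $\leq \diam(Z) < D(V)/2L$ and requires no action at all.
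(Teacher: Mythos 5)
You have proved the wrong statement. The statement in question is Lemma~\ref{lem-approx}, which asserts that under the given following and chord-arc hypotheses the Hausdorff limit $J = \lim_{\mathcal{H}} J_n$ exists and is an $\eps\del^2$-local $\frac{4S+3\del}{\del^2}$-quasi-arc whose endpoints are the limits of the endpoints of $J_n$, and which $\eps$-follows $J_1$. Your proposal instead opens by announcing a proof of Theorem~\ref{thm-modified-qarc} and explicitly invokes Lemma~\ref{lem-approx} ``as a black box at the end to extract the limiting quasi-arc.'' Nowhere do you prove that the Hausdorff limit exists (which requires showing that $(J_n)$ is Cauchy in the Hausdorff metric, e.g.\ via the geometric tail $\sum_{m \ge n} \eps\del^m$), nor do you verify the local chord-arc property of the limit or derive the specific constant $\frac{4S+3\del}{\del^2}$. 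A genuine proof of Lemma~\ref{lem-approx} must transport the uniform bound \eqref{eq-cqa} from the approximating arcs $J_n$ to the limit $J$, using the $\eps\del^n$-following maps $p_n \colon J_{n+1} \to J_n$ to control how subarcs and their diameters behave in the limit, and must track the constraint $\del \le \min\{\frac{s}{4+2S},\frac{1}{10}\}$ to make the estimates close up.

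For what it is worth, the outline you do give of Theorem~\ref{thm-modified-qarc} tracks the paper's proof of that theorem quite closely: banding $\cV$ by the scale function $D$, rerouting via porosity and avoidability inside pairwise disjoint neighbourhoods (with separation keeping the bands honest), restoring the chord-arc property \eqref{eq-cqa} with Proposition~\ref{prop-coarsestr} at each stage, and finally passing to the limit via Lemma~\ref{lem-approx}. But that is not the statement you were asked to prove, and your argument contains no content that would establish Lemma~\ref{lem-approx} itself.
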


We now use these results to build our desired quasi-arc.

\begin{proof}[Proof of Theorem \ref{thm-modified-qarc}]
	As in the proof of Proposition~\ref{prop-porous-points},
	let $D_0 = \sup\{D(V):V \in \cV\}$; if $\cV = \emptyset$, set $D_0 = \diam(Z)$.
	Recall that $D_0 \leq L \diam(Z)$, and we assume that $L \geq 10$.
	
	Let $r=r(L, N, \nu) > 0$ be fixed sufficiently small as determined later in the proof.
	As before, define $\cV_n = \{ V \in \cV : r^n < D(V)/D_0 \leq r^{n-1} \}$, for $n \in \N$,
	and let $\cC_n = \{ N(V, D_0r^n/4L) : V \in \cV_n \}$.
	As $\cV$ is $L$-separated, each $\cC_n$ consists of disjoint neighbourhoods.
	(Note that two neighbourhoods from different $\cC_n$ may well intersect.)
	
	Suppose $x$ and $y$ are given with $\rho(x, V), \rho(y,V) \geq D(V)/\nu$
	for each $V \in \cV$.
	Without loss of generality, we assume that $\nu \geq 10L^2$.
	Let $M \in \Z$ be maximal so that $\rho(x,y) < D_0 r^M / 4L\nu$.
	We start with an arc $J_M = J_M[x,y]$ in $Z$ of diameter at most $L\rho(x,y)$, and
	build arcs $J_n$ in $Z$ by induction on $n > M$.
	
	\vspace{1mm}
	{\noindent\bf Inductive step: }	
	Assume we have been given an arc $J_{n-1} = J_{n-1}[x,y]$.
	
	First, assuming $n > 0$, 
	we modify $J_{n-1}$ independently inside the (disjoint) sets in $\cC_n$.
	Let $r'_n = D_0 r^n/2\nu$, and observe that for any $V \in \cV_n$,
	\[
		A(V, r'_n, 2r'_n) \subset A(V, r'_n/L, 5r'_nL) \subset N(V, D_0r^n/4L) \in \cC_n.
	\]
	
	Note that $x$ and $y$ lie outside $N(V,2r'_n)$, as 
	$ \rho(\{x,y\},V) \geq D(V)/\nu > D_0 r^n / \nu = 2r'_n$.
	
	Given $V \in \cV_n$, each time $J_{n-1}$ meets $N(V, r'_n/L)$, the arc $J_{n-1}$
	travels through
	$A=A(V, r'_n, 2r'_n)$ both before and after meeting $N(V, r'_n/L)$.
	For each such meeting, we use that $V$ is $L$-avoidable with ``$r$'' equal to $r'_n$
	to find a detour path in $A(V, r'_n/L, 2r'_nL)$ which $4r'_nL$-follows the previous path.
	After doing so, we concatenate the paths found into an arc $J_n'$, as at the end of the proof of Proposition~\ref{prop-avoidable-bdry}.
	This arc $J_n'$ will $4r'_nL$-follow $J_{n-1}$.
	
	If $n \leq 0$, set $J_n' = J_{n-1}$, which $0$-follows $J_{n-1}$.
	
	Second, apply Proposition~\ref{prop-coarsestr} to $J_n'$ with $\iota = r'_n/2L$.
	Call the resulting arc $J_n$: it $\iota$-follows $J_n'$, 
	so it $(D_0r^n/4L)$-follows $J_{n-1}$, as
	$\iota + 4r_n'L \leq 5r_n'L \leq D_0 r^n/4L$.
	Since $\rho(J'_n, V) \geq r'_n/L$ and $J_n$ $(r'_n/2L)$-follows $J'_n$,  
	we also have 
	\begin{equation}\label{eq-qarc1}
		\rho(J_n, V) \geq \frac{r'_n}{2L} = \frac{D_0r^n}{4L\nu}.
	\end{equation}
	
	\vspace{1mm}
	{\noindent\bf Limit arc: }	
	Consider the sequence of arcs $J_M, J_{M+1}, \ldots$.
	
	For every $i \in \N$, 
	$J_{M+i}$ $(D_0r^M/4L)r^i$-follows $J_{M+i-1}$.
	Let $s$ and $S$ be given by Proposition \ref{prop-coarsestr},
	then observe that $J_{M+i}$ satisfies $\forall u,v \in J,$
	\begin{multline*}
		\rho(u,v) < s\iota=\left(\frac{s}{\nu}\right)\left(\frac{D_0r^M}{4L}\right)r^i
		\\ \implies
      \diam(J[x,y]) < S\iota=\frac{S D_0 r^{M}}{4L\nu} \leq S \left(\frac{D_0r^M}{4L}\right)r^i.
	\end{multline*}
	In other words, $J_{M+i}$ satisfies \eqref{eq-cqa} with
	$s$ replaced by $s' = s/\nu$ and $\iota = (D_0r^M/4L)r^i$.
	
	We can assume that
	$r \leq \min\left\{\frac{s'}{4+2S},\frac{1}{10}\right\}$, 
	since $s'$ and $S$ depend only on $L$, $N$ and $\nu$.
	
	Now apply Lemma~\ref{lem-approx} to the arcs 
	$J_M, J_{M+1}, \ldots$ with $s'$ replacing $s$, 
	$\del = r$ and $\eps = D_0r^M/4L$,
	to find an arc $\gamma$, with endpoints $x$ and $y$.
	The arc $\gamma$ is a 
	$(D_0r^{M+2}/4L)$-local $\mu$-quasi-arc, where $\mu=\mu(L,N,\nu)$.
	
	For each $n \geq M$, $\gamma$ lies in a neighbourhood of $J_n$ of size at most
	\begin{equation}\label{eq-qarc3}
		\frac{D_0r^{n+1}}{4L} + \frac{D_0r^{n+2}}{4L} + \cdots = \frac{D_0r^{n+1}}{4L (1-r)}
			\leq \frac{D_0r^{n}}{8L\nu},
	\end{equation}
	where this last inequality holds for $r \leq 1/4\nu$.
	(We may now set $r = \min\{1/4\nu, s'/(4+2S)\}$.)

	In particular, $\gamma$ lies in a ball about $x$ of radius at most
	\begin{equation}\label{eq-qarc2}
		\diam(J_M) + \frac{D_0r^M}{8L\nu} 
			\leq L \rho(x,y)+\frac{D_0r^M}{8L\nu}
			\leq \frac{D_0r^M}{2\nu},
	\end{equation}
	so by Remark~\ref{rmk-local-global-quasiarc}, $\gamma$ is a $\lambda'$-quasi-arc
	with $\lambda'=\lambda'(L,N)$ the maximum of $\mu$ and
	$\diam(\gamma)(4L/D_0r^{M+2}) \leq 4L/\nu r^2$.

	\vspace{1mm}
	{\noindent\bf Avoiding obstacles: }
	For any $V \in \cV_n$ with $n > M$, \eqref{eq-qarc1} and \eqref{eq-qarc3} give
	\[
		\rho(\gamma, V) \geq \rho(J_n, V) - \frac{D_0r^n}{8L\nu} 
		\geq \frac{D_0r^n}{8L\nu} \geq \frac{r}{8L\nu} D(V).
	\]
	If $V \in \cV_n$ with $n \leq M$, then $\rho(x,V) \geq D(V)/\nu \geq D_0r^n/\nu$,
	while by \eqref{eq-qarc2} $\gamma$ lies in $B(x, D_0r^M/2\nu)$,
	so
	\[
		\rho(\gamma, V) \geq \frac{D_0r^n}{\nu} - \frac{D_0r^M}{2\nu} 
			\geq \frac{D_0r^n}{2\nu} \geq \frac{r}{2\nu}D(V).\qedhere
	\]
\end{proof}

\section{Building quasi-hyperbolic planes}\label{sec-build-planes}

We now have all we need to construct quasi-isometrically embedded hyperbolic planes.
\begin{proof}[Proof of Theorem \ref{thm-main-hyp-rel-hyp}]
Let $\cV = \{ \bdry H : H \in \cH \}$, where $\cH$ is the collection of
all horoballs of $X=X(G,\cP_1)$ and left cosets of the subgroups of $\cP_2$.
Define the scale function $D : \cV \ra (0, \infty)$ by $D(\bdry H) = e^{-\eps d(w, H)}$ for each $H \in \cH$.

The boundary $\bdry(G, \cP_1)$ is $N$-doubling, for some $N$, by Proposition \ref{prop-bdry-doubling}.

Theorem~\ref{thm-bowditch} implies that $\bdry (G,\cP_1)$ is connected and locally connected, with no global cut points.
By Proposition~\ref{prop-bdry-lin-conn} $\bdry (G,\cP_1)$ is $L_2$-linearly connected
for some $L_2 \geq 1$.  Proposition~\ref{prop-avoidable-bdry} implies that there exists $L_3 \geq 1$ so that for every $H \in \cH$, $\bdry H$
is $L_3$-avoidable on scales below $e^{-\eps d(w,H)}$, and (by Lemma~\ref{lem-bdry-porous}) $\bdry H$ is $L_1$-porous on scales
below $e^{-\eps d(w,H)}$.
Lemma~\ref{lem-parabolic-points} shows that $\cV$ is $L_4$-separated, for some $L_4$.

We set $L$ to be the maximum of $L_1$, $L_2$, $L_3$ and $L_4$.
We apply Corollary~\ref{cor-avoiding-qarc} to build a $\lambda$-quasi-arc $\gamma$ in $\bdry (G,\cP_1)$ for $\lambda=\lambda(L,N)$,
which satisfies, for all $H \in \cH$, 
\begin{equation}\label{eq-gam-sep} \rho(\gamma, \bdry H) \geq \frac{1}{\lambda}e^{-\eps d(w,H)}. \end{equation}

	In the Poincar\'e disc model for $\HH^2$, denote the standard half-space by 
	$Q = \{(x,y) : x^2+y^2 < 1, x \geq 0 \}$, and fix a basepoint $(0,0)$.
	We endow the semi-circle $\bdry Q$ with the angle metric $\rho_Q$, which makes $\bdry Q$
	quasi-symmetric (in fact similar) to the interval $[0,1]$.
	For some $C$, $\rho_Q$ is a visual metric on $\bdry Q$ with
	basepoint $(0,0)$ and parameters $C$ and $\eps=1$ \cite[III.H.3.19]{BH-99-Metric-spaces}.
	
	Therefore by \cite[Theorem 4.9]{TV-80-qs} there is a quasisymmetric map
	$f: \bdry Q \ra [0,1] \ra \gamma \subset \bdry (G,\cP)$.
	In fact, as $\bdry Q$ is connected, $f$ is a ``power quasisymmetry'' by \cite[Corollary 3.12]{TV-80-qs};
	see \cite[Section 6]{BS-00-gro-hyp-embed} for this definition.
	
	Both $Q$ and $X(G, \cP_1)$ are visual (see subsection \ref{ssec-visual}).
	Thus there is an extension of $f$ to a 
	quasi-isometric embedding of $Q$ in $X(G,\cP_1)$, with boundary $\gamma$ \cite[Theorems 7.4, 8.2]{BS-00-gro-hyp-embed}.
	
	Finally, as we have \eqref{eq-gam-sep}, Proposition \ref{prop-avoid-horoballs} 
	gives us a transversal, quasi-isometric embedding of $\HH^2$ in $X(G, \cP)$.
\end{proof}

\section{Application to 3-manifolds}\label{sec-three-manifolds}
In this final section, we consider which $3$-manifold groups contain a quasi-isometrically embedded copy
of $\HH^2$. Recall that an irreducible $3$-manifold is a graph manifold if its JSJ decomposition contains Seifert fibered components only. A non-geometric graph manifold is one with non-trivial JSJ decomposition.

\begin{lemma}\label{lem-graph-mfld-plane}
 Let $M$ be a non-geometric closed graph manifold. Then $\pi_1(M)$ contains a quasi-isometrically embedded copy of $\HH^2$.
\end{lemma}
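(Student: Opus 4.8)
The plan is to exhibit inside $\pi_1(M)$ a quasi-isometrically embedded copy of a closed hyperbolic surface group, obtained from a \emph{horizontal} surface in a finite cover of $M$.

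If $M$ is Seifert fibered, I would argue directly: when the base orbifold is hyperbolic, $\pi_1(M)$ is quasi-isometric to $\HH^2\times\R$ or (when the Euler number is non-zero) to $\widetilde{\mathrm{SL}_2\R}$, which is bi-Lipschitz to $\HH^2\times\R$, so the slice $\HH^2\times\{0\}$ gives the required embedding; when the base orbifold is spherical or Euclidean, $\pi_1(M)$ is virtually nilpotent or finite and is outside the range of interest. So I may assume that the JSJ decomposition of $M$ is non-trivial with all pieces Seifert fibered; then $M$ is aspherical and non-geometric, $\pi_1(M)$ is not virtually nilpotent, every JSJ piece has non-empty boundary, and --- since a closed graph manifold all of whose Seifert pieces have base orbifold of non-negative Euler characteristic is geometric --- at least one JSJ piece has a hyperbolic base orbifold.

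Next I would use the classical fact that such an $M$ has a finite cover $\hat M\to M$ containing a two-sided embedded horizontal surface $S$, i.e.\ one transverse to all the Seifert fibrations. (The obstruction to the existence of such a surface is a rational condition on the Euler numbers of the pieces, and it is cleared by passing to a suitable finite cover.) Over each Seifert piece $N$, the intersection $S\cap N$ is a finite orbifold cover of the base orbifold $\cO_N$; adding up Euler characteristics, and using that at least one $\cO_N$ is hyperbolic while the rest (being bounded 2-orbifolds that are genuine JSJ bases) contribute non-positive terms, $S$ is a $\pi_1$-injective closed surface with $\chi(S)<0$, so $\pi_1(S)$ is a closed hyperbolic surface group, hence quasi-isometric to $\HH^2$.

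The key step, which I expect to be the main obstacle, is that $\pi_1(S)$ is undistorted in $\pi_1(\hat M)$. The inclusion is equivariant for the actions on the Bass--Serre tree $T$ of the JSJ splitting: the vertex stabilizers $\pi_1(S)\cap\pi_1(N)$ are finite-index subgroups of the base orbifold groups $\pi_1^{\mathrm{orb}}(\cO_N)$, and the edge stabilizers are infinite cyclic ``horizontal'' subgroups of the $\Z^2$ JSJ-torus groups. Since base orbifold groups of bounded pieces are virtually free, each central extension $1\to\Z\to\pi_1(N)\to\pi_1^{\mathrm{orb}}(\cO_N)\to 1$ is virtually a direct product, so each $\pi_1(S)\cap\pi_1(N)$ is undistorted in $\pi_1(N)$, and each edge group is an undistorted $\Z$ in the corresponding $\Z^2$; a routine normal-form argument for graphs of groups then gives that $\pi_1(S)$ is quasi-isometrically embedded in $\pi_1(\hat M)$. (Alternatively, Leeb's non-positively curved metric on $\hat M$ can be arranged so that $S$ is totally geodesic, hence $\pi_1(S)$ convex.) Composing $\HH^2\simeq\pi_1(S)\hookrightarrow\pi_1(\hat M)$ with the quasi-isometry $\pi_1(\hat M)\to\pi_1(M)$ coming from the finite cover then produces the desired quasi-isometrically embedded $\HH^2$ in $\pi_1(M)$; the only other point requiring care is the production of the horizontal surface in a finite cover (including the mildly degenerate Seifert pieces with Euclidean base orbifold, where $S$ picks up flat annuli), but this does not affect the conclusion since $\chi(S)<0$.
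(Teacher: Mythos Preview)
There is a genuine gap. You assert as a ``classical fact'' that every closed non-geometric graph manifold has a finite cover containing an embedded two-sided horizontal surface, and that the obstruction ``is cleared by passing to a suitable finite cover.'' But an embedded horizontal surface in a cover forces that cover (or a further double cover) to fiber over $S^1$, so your claim is precisely virtual fibering; and Neumann (\emph{Commensurability and virtual fibration for graph manifolds}, Topology 36 (1997)) gave explicit closed graph manifolds that do \emph{not} virtually fiber---the rational gluing condition you allude to is exactly the one that finite covers fail to clear in general. For such $M$ your surface $S$ simply does not exist and the argument stops at its first substantive step. (A secondary point: vertex- and edge-wise undistortion in a graph of groups does not by itself yield global undistortion via ``a routine normal-form argument''; for an honest fiber of a graph-manifold fibration the multi-twist monodromy has linear growth and the conclusion happens to be correct, but your justification is not.)

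The paper sidesteps all of this by first invoking Behrstock--Neumann (all closed non-geometric graph manifold groups are quasi-isometric), reducing the problem to a single example. It then writes down one explicit graph manifold---two copies of $S'\times S^1$, with $S'$ a twice-punctured torus, glued so as to swap base and fiber directions---embeds a genus~$2$ surface $S$ in it by hand, and checks that $S$ is a \emph{retract} of $M$, hence $\pi_1$-injective and undistorted in one stroke. Your approach becomes correct if you make the Behrstock--Neumann reduction first and then choose a graph manifold that genuinely fibers with hyperbolic fiber; but at that point it has essentially converged to the paper's argument.
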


\begin{proof}
 All fundamental groups of closed non-geometric graph manifolds are quasi-isometric 
 \cite[Theorem 2.1]{BN-08-qi-graph-manifold}, so we can choose $M$. 
 Consider a splitting of the closed genus 2 surface $S$ into an annulus $A$ and a twice-punctured torus $S'$, as in Figure~\ref{fig-surfacepic} below.
\begin{figure}[h]
\centering
\includegraphics[scale=0.5]{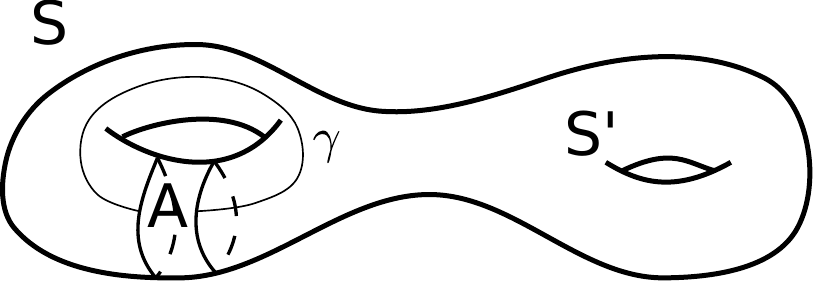}
\caption{The surfaces $S,S',A$ and the path $\gamma$.}\label{fig-surfacepic}
\end{figure}

The manifold $S' \times S^1$ has two boundary components homeomorphic to $S^1 \times S^1$.
Let $M$ be obtained from two copies $M_1,M_2$ of $S'\times S^1$ by gluing the corresponding boundary components together
in a way that interchanges the two $S^1$ factors.
\par
We now wish to find an embedding $\iota$ of $S$ into $M$ so that $M$ retracts onto the image of $\iota$. If we have such an embedding, then first of all $\pi_1(S)$ injects in $\pi_1(M)$, so that we get a map $f:\HH^2\to \tilM$. Also, $\pi_1(S)$ is undistorted in $\pi_1(M)$ and therefore $f$ is a quasi-isometric embedding.
\par
The specific embedding $\iota:S\to M$ which we describe is obtained from two embeddings $\iota_1:A\to M_1$ and $\iota_2:S'\to M_2$.
\par
Let $\gamma:[0,1]\to S'$ be the path connecting the boundary components of $S'$ depicted in Figure~\ref{fig-surfacepic}. As $A$ can be identified with $[0,1]\times S^1$ we can define $\iota_1:A\to M_1$ by $(t,\theta)\mapsto (\gamma(t),\theta)$.  We can assume, up to changing the gluings, that there exists $p$ such that $\iota_1(A)\cap M_2\subseteq S'\times\{p\}$. We can then define $\iota_2$ by $x\mapsto (x,p)$.
\par
We now have an embedding $\iota:S\to M$ so that $\iota|_A=\iota_1,\iota|_{S'}=\iota_2$. We now only need to show that $\iota(S)$ is a retract of $M$. Define $g_2:M_2\to S'\times\{p\}$ simply as $(x,\theta)\mapsto (x,p)$. It is easy to see that there exists a retraction $g':S'\to\gamma$ such that each boundary component of $S'$ is mapped to an endpoint of $\gamma$. Let $g_1:M_1\to \gamma\times S^1$ be $(x,\theta)\mapsto (g'(x),\theta)$. There clearly exists a retraction $g:M\to \iota(S)$ which coincides with $g_i$ on $M_i$.
\end{proof}

\begin{theorem}\label{thm-threemanifolds}
 Let $M$ be a connected orientable closed $3$-manifold. Then $\pi_1(M)$ contains a quasi-isometrically embedded copy of $\HH^2$ if and only if $M$ does not split as the connected sum
of manifolds each with geometry $S^3, \R^3, S^2\times \R$ or $\mathrm{Nil}$.
\end{theorem}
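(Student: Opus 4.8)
The plan is to pass through the geometrisation theorem and reduce to Theorem~\ref{thm-main-vnilp}. First I would dispose of the reducible case. If $M = M_1 \# \cdots \# M_k$ is the prime decomposition, then $\pi_1(M) = \pi_1(M_1) * \cdots * \pi_1(M_k)$, and (as in the ``only if'' direction of Theorem~\ref{thm-main-vnilp}) a quasi-isometrically embedded $\HH^2$ in a free product with finite amalgamation lives, up to finite distance, in a conjugate of one of the free factors; hence it suffices to treat the prime factors. So I would reduce to: $M$ prime, and show $\pi_1(M)$ contains a quasi-isometrically embedded $\HH^2$ if and only if $M$ does not have geometry $S^3$, $\R^3$, $S^2 \times \R$, or $\mathrm{Nil}$. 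For the ``only if'' direction in this case, these four geometries give virtually nilpotent (indeed finite, virtually $\Z$, virtually $\Z^2$ via the $S^2\times\R$ case being $\Z/2$-by-$(\Z \text{ or } \Z\times\Z/2)$, and virtually the integral Heisenberg group) fundamental groups, which do not contain a quasi-isometrically embedded copy of $\HH^2$: such a group has polynomial growth, while $\HH^2$ has exponential growth, and a quasi-isometric embedding distorts growth only polynomially---contradiction. (Alternatively, these groups have finite asymptotic dimension $\le 3$ but that is overkill; the growth argument is cleanest.)

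The substantive direction is: $M$ prime and not one of those four geometries $\Rightarrow$ $\pi_1(M)$ contains a quasi-isometrically embedded $\HH^2$. Here I would invoke geometrisation: a closed orientable prime $3$-manifold is either geometric (one of the eight Thurston geometries) or is decomposed by its (nonempty) JSJ tori into pieces that are Seifert-fibred or hyperbolic. I split into cases. \emph{Case 1: $M$ is a closed graph manifold.} This is exactly Lemma~\ref{lem-graph-mfld-plane}, which directly produces a quasi-isometrically embedded $\HH^2$. \emph{Case 2: $M$ is closed hyperbolic.} Then $\pi_1(M)$ is a one-ended hyperbolic group which is not virtually free, so Theorem~\ref{thm-bonk-kleiner} (Bonk--Kleiner) applies. \emph{Case 3: $M$ is a geometric manifold of type $S^2\times\R$ with $\pi_1$ \emph{not} virtually nilpotent}---this does not occur for closed orientable $M$, so it can be ignored---and the geometries $\mathbb{H}^2\times\R$, $\widetilde{SL_2}$, $\mathrm{Sol}$, and all Seifert-fibred non-graph cases are either already graph manifolds or have fundamental groups containing an obvious quasi-isometrically embedded $\HH^2$ coming from a horizontal or base surface; in fact all non-hyperbolic geometric closed orientable $3$-manifolds other than the four excluded geometries are graph manifolds (for $\mathbb{H}^2\times\R$ and $\widetilde{SL_2}$ a finite cover is a surface bundle / circle bundle over a hyperbolic surface and $\mathrm{Sol}$-manifolds are torus bundles which are graph manifolds in the broad sense handled by Lemma~\ref{lem-graph-mfld-plane} after noting the QI-rigidity input used there, or handled directly). \emph{Case 4: $M$ has a nonempty JSJ decomposition with at least one hyperbolic piece.} This is where Theorem~\ref{thm-main-vnilp} enters: $\pi_1(M)$ is hyperbolic relative to the JSJ vertex groups that are not hyperbolic-relative themselves; more precisely, by work of Dahmani, Bowditch, etc., $\pi_1(M)$ is hyperbolic relative to the collection of fundamental groups of the Seifert-fibred pieces and the maximal graph-manifold sub-JSJ-pieces, together with the JSJ tori---but cleanest is: $\pi_1(M)$ is hyperbolic relative to $\pi_1$ of the maximal graph manifold components and the peripheral $\Z^2$'s are not what matters; we instead observe directly that a hyperbolic JSJ piece $N$ has $\pi_1(N)$ which is hyperbolic relative to its boundary tori ($\Z^2$ peripheral subgroups, virtually nilpotent), one-ended, and not a nontrivial graph of groups with finite edge and virtually nilpotent vertex groups (it is one-ended and has a noncyclic surface subgroup, so it is certainly not such a graph of groups). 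Theorem~\ref{thm-main-vnilp} then gives a transversal quasi-isometrically embedded $\HH^2$ in $\pi_1(N)$, and since the JSJ edge groups are $\Z^2$ and $\pi_1(N)$ is undistorted---no, it may be distorted if $N$ is Seifert-fibred, but $N$ is hyperbolic so $\pi_1(N)$ is a relatively quasiconvex, hence undistorted, subgroup of $\pi_1(M)$ (Dahmani)---the embedding persists in $\pi_1(M)$.

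The main obstacle, and the point requiring the most care, is Case 4: correctly identifying a relatively hyperbolic structure on $\pi_1(M)$ (or on a suitable vertex group) to which Theorem~\ref{thm-main-vnilp} applies, and checking the non-splitting hypothesis and the undistortion needed to push the plane from a piece up to $\pi_1(M)$. I expect to route around distortion issues exactly as in the proof of Theorem~\ref{thm-main-vnilp} itself---using transversality and Proposition~\ref{coned-off} to pass through the coned-off graph, in which a vertex group of the JSJ embeds isometrically for a suitable generating set. A secondary subtlety is making sure the ``graph manifold'' hypothesis in Lemma~\ref{lem-graph-mfld-plane} covers all the non-hyperbolic, non-excluded geometric cases (in particular $\mathrm{Sol}$ and the $\mathbb{H}^2\times\R$, $\widetilde{SL_2}$ circle-bundle cases); if it does not literally, these are handled by exhibiting the base-orbifold surface subgroup directly and checking it is undistorted, which is classical. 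Everything else is bookkeeping over the finitely many geometries.
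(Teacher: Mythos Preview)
Your outline matches the paper's proof: reduce to prime summands, invoke geometrisation, handle each Thurston geometry separately, use Lemma~\ref{lem-graph-mfld-plane} for (non-geometric) graph manifolds, and apply the main theorem to a hyperbolic JSJ piece. Two points where your execution diverges from the paper are worth flagging.

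First, your Case~3 tries to absorb $\HH^2\times\R$, $\widetilde{SL_2\R}$ and $\mathrm{Sol}$ into Lemma~\ref{lem-graph-mfld-plane}. This does not work as stated: the proof of that lemma rests on the Behrstock--Neumann quasi-isometry classification, which applies to \emph{non-geometric} closed graph manifolds; a closed $\HH^2\times\R$ or $\widetilde{SL_2\R}$ manifold is a single Seifert piece (hence geometric) and a $\mathrm{Sol}$ manifold is not quasi-isometric to a non-geometric graph manifold either. The paper simply treats each geometry with a one-line observation ($\HH^2\times\R$ obvious, $\widetilde{SL_2\R}$ quasi-isometric to $\HH^2\times\R$, $\mathrm{Sol}$ contains isometrically embedded hyperbolic planes), which is what your parenthetical ``or handled directly'' should become.

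Second, for the undistortion of a hyperbolic piece $N$ in Case~4, the paper does not go through relative quasiconvexity or the coned-off graph trick from Theorem~\ref{thm-main-vnilp}; it cites Leeb's result that $M$ carries a nonpositively curved metric in which $\widetilde N$ is convex in $\widetilde M$, so $\pi_1(N)$ is undistorted in $\pi_1(M)$ outright. This is cleaner than either of your proposed routes, and in particular avoids having to match up the peripheral structure on $\pi_1(N)$ (relative to its cusp tori) with a relatively hyperbolic structure on $\pi_1(M)$.
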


\begin{proof}
We will use the geometrisation theorem \cite{Pe1,Pe2,KL-notes,MT-poincare,CZ-geom}.
It is easily seen that $\pi_1(M)$ contains a quasi-isometrically embedded copy of $\HH^2$ if and only if the fundamental group of one of its prime summands does.
So, we can assume that $M$ is prime. Suppose first that $M$ is geometric. We list below the possible geometries, each followed by yes/no according to whether or not it contains a quasi-isometrically embedded copy of $\HH^2$ in that case and the reason for the answer.
\begin{itemize}
 \item $S^3$, no, it is compact.
 \item $\R^3$, no, it has polynomial growth.
 \item $\HH^3$, yes, obvious.
 \item $S^2\times \R$, no, it has linear growth.
 \item $\HH^2\times \R$, yes, obvious.
 \item $\widetilde {SL_2\R}$, yes, it is quasi-isometric to $\HH^2\times\R$ (see, for example, \cite{Rief-01-hyp-plane-x-R}).
 \item $\mathrm{Nil}$, no, it has polynomial growth.
 \item $\mathrm{Sol}$, yes, it contains isometrically embedded copies of $\HH^2$.
\end{itemize}

If $M$ is not geometric, then it has a non-trivial JSJ splitting, i.e. there is a canonical family of tori and Klein bottles that decomposes $M$ into components each of which is either Seifert fibered or hyperbolic (meaning that it admits a finite volume hyperbolic metric). We will consider two cases.

\begin{itemize}
 \item There are no hyperbolic components. By definition, $M$ is a graph manifold. In this case we can apply Lemma~\ref{lem-graph-mfld-plane} to find the quasi-isometrically embedded $\HH^2$.
 \item There is at least one hyperbolic component, $N$. As $\pi_1(N)$ is one-ended and hyperbolic relative to copies 
 of $\Z^2$, by Theorem \ref{thm-main-hyp-rel-hyp} 
 (or by \cite{Ma-Zh-08-fuch-hyp-knot,Ma-Zh-09-fuch-hyp-link}, upon applying Dehn filling to the manifold)
 it contains a quasi-isometrically embedded copy of $\HH^2$. 
 This is also quasi-isometrically embedded in $\pi_1(M)$ since $\pi_1(N)$ is undistorted in $\pi_1(M)$, because
 there exists a metric on $M$ such that $\widetilde{N}$ is convex in $\tilM$ (see \cite{Leeb-nonpos}). \qedhere
\end{itemize}
\end{proof}

%
\bibliographystyle{alpha}
\bibliography{biblio}

\end{document}